\newcommand{\Q}{\mathbb{Q}}
\newcommand{\Z}{\mathbb{Z}}
\newcommand{\Syl}{\operatorname{Syl}}
\newcommand{\Irr}{\operatorname{Irr}}
\newcommand{\Sym}{\operatorname{Sym}}
\newcommand{\Id}{{{\operatorname{Id}}}}
\newcommand{\Stab}{\operatorname{Stab}}
\newcommand{\I}{\operatorname{I}}
\newcommand{\Sub}{\operatorname{\mathbf{Sub}}}
\newcommand{\fix}{\operatorname{fix}}
\newcommand{\Sup}{\operatorname{Sup}}
\newcommand{\Lin}{\operatorname{Lin}}
\newtheorem{thm}{Theorem}[section]
\newtheorem{lem}[thm]{Lemma}
\newtheorem{pro}[thm]{Proposition}
\newtheorem{cor}[thm]{Corollary}
\newtheorem*{conA}{Conjecture A}
\newtheorem*{conB}{Conjecture B}
\theoremstyle{definition}
\numberwithin{equation}{section}
\newtheorem{thml}{Theorem}
\begin{document}

\title[The Picky and Subnormalizer Conjectures for symmetric groups]{The Picky and Subnormalizer Conjectures for symmetric groups}


\author{Juan Mart\'inez Madrid}
\address{Departament de Matem\`atiques, Universitat de Val\`encia, 46100
  Burjassot, Val\`encia, Spain}
\email{Juan.Martinez-Madrid@uv.es}

\thanks{The research of the  author was supported by Ministerio de
Ciencia e Innovaci\'on (Grant PID2022-137612NB-I00 funded by
MCIN/AEI/10.13039/501100011033 and ``ERDF A way of making Europe"),
and by Generalitat Valenciana  CIACIF/2021/228 and CIBEFP/2023/125.}

\keywords{character correspondences, picky elements, subnormalizers}

\subjclass[2020]{Primary 20C30; Secondary 20B30, 20D35, 20D20}

\date{\today}

\begin{abstract}
A new type of conjectures on characters of finite groups, related to the McKay conjecture, have recently been proposed. In this paper, we study these conjectures for symmetric groups.
\end{abstract}

\maketitle



\section{Introduction}\label{Section1}

All groups considered in this work are finite. Let $p$ be a prime, let $G$ be a group, and let $x$ be a $p$-element of $G$. Following \cite{MRPicky}, we say that $x$ is a \textbf{picky} element of $G$ if it lies in a unique Sylow $p$-subgroup of $G$. Picky elements were extensively studied in \cite{MMM}, where they were shown to possess desirable properties from the point of view of character theory (see Corollary 2.4 of \cite{MMM}).

This motivated the statement new local-global conjectures involving picky elements and character zeros. Given $x\in G$, we define
\[
\Irr^x(G) = \{ \chi \in \Irr(G) \mid \chi(x) \ne 0 \}.
\]
The following conjecture was formulated in \cite{MRPicky}.

\begin{conA}[Picky Conjecture]
Let $p$ be a prime, let $G$ be a group, and let $P \in \Syl_p(G)$. If $x \in P$ is picky, then there exists a bijection
\[
\Gamma: \Irr^x(G) \rightarrow \Irr^x(\mathbf{N}_G(P))
\]
satisfying the following properties:
\begin{itemize}
    \item[(I)] $\Gamma(\chi)(1)_p = \chi(1)_p$ for every $\chi \in \Irr^x(G)$.
    \item[(II)] $\Q(\Gamma(\chi)(x)) = \Q(\chi(x))$ for every $\chi \in \Irr^x(G)$.
\end{itemize}
\end{conA}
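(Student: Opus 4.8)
The plan is to prove Conjecture~A for $G=S_n$ by reducing it to the McKay conjecture for symmetric groups (which is known and comes with an explicit combinatorial bijection), together with a precise description of the irreducible characters that do not vanish at a picky element. The first task is to classify the picky $p$-elements of $S_n$; this should be available from \cite{MMM}, or can be obtained directly by analysing how the cycle type of a $p$-element must fit inside the orbit structure of a Sylow $p$-subgroup. For odd $p$ the conclusion is very restrictive: $x$ is picky exactly when it is a product of $m\le p-1$ disjoint $p$-cycles fixing $f\le p-1$ points, so that $n=mp+f<p^2$, $P\cong C_p^{\,m}$, and $\mathbf{N}_{S_n}(P)=(C_p\rtimes C_{p-1})\wr S_m\times S_f$, a group of the shape $C_p^{\,m}\rtimes K$ with $K$ a $p'$-group. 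For $p=2$ the list of picky elements is larger (it contains, for instance, the $2^k$-cycle of $S_{2^k}$), but one always has $\mathbf{N}_{S_n}(P)=P$, a self-normalising --- and rational --- Sylow $2$-subgroup.

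The technical heart is the claim that, for $x$ picky, $\Irr^x(S_n)=\irrp{S_n}$, and that every character in $\Irr^x(\mathbf{N}_{S_n}(P))$ likewise has $p'$-degree. On the ambient side, the Murnaghan--Nakayama rule gives $\chi^\lambda(x)=0$ unless the $p$-weight of $\lambda$ equals the value forced by the cycle type of $x$ (a partition of size $<p$ has no $p$-hook, so the recursion stalls), and by Macdonald's hook-length formula this weight condition is equivalent to $\chi^\lambda(1)_p=1$; for the reverse inclusion one must exclude sign cancellation, i.e.\ show $\chi^\lambda(x)\ne0$ for \emph{every} $\lambda$ of the correct weight, which I would do on the abacus by checking that the signed count of $p$-ribbon strippings of $\lambda$ to its $p$-core factors through the $p$-quotient as $\pm\binom{m}{|\lambda^{(0)}|,\dots,|\lambda^{(p-1)}|}\prod_i\dim\chi^{\lambda^{(i)}}\cdot\dim\chi^{\core_p(\lambda)}$, which is never zero. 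On the normaliser side, when $p$ is odd all of $\Irr(\mathbf{N}_{S_n}(P))$ has $p'$-degree (it is $C_p^{\,m}\rtimes K$ with $C_p^{\,m}$ abelian and $K$ a $p'$-group), and a direct wreath-product computation yields $\xi(x)=\pm\binom{m}{(m_\psi)}\prod_\psi\dim\chi^{\mu^{(\psi)}}\ne0$, so in fact $\Irr^x(\mathbf{N}_{S_n}(P))=\Irr(\mathbf{N}_{S_n}(P))$; when $p=2$ one proves instead that $|\cent{P}{x}|=|P:P'|$ for picky $x$, whence $\sum_{\psi\in\Lin(P)}|\psi(x)|^2=|P:P'|=|\cent{P}{x}|=\sum_{\psi\in\Irr(P)}|\psi(x)|^2$ forces $\Irr^x(P)=\Lin(P)=\irrp{P}$.

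With both sides identified as sets of $p'$-degree characters of the same cardinality, I would take $\Gamma$ to be the combinatorial McKay bijection for $S_n$, sending $\chi^\lambda$ to the character of $\mathbf{N}_{S_n}(P)$ prescribed by the $p$-core and $p$-quotient of $\lambda$. Property~(I) is then automatic, as $\chi(1)_p=1=\Gamma(\chi)(1)_p$ throughout. For property~(II), $\chi(x)\in\Z$ since $S_n$ is rational, so $\Q(\chi(x))=\Q$, and it suffices that $\Gamma(\chi)(x)\in\Q$: this is clear for $p=2$ ($P$ is rational), clear for $p=3$ ($C_p\rtimes C_{p-1}\cong S_3$ is rational, hence so is the wreath product $\mathbf{N}_{S_n}(P)$), and in general follows from the explicit value $\Gamma(\chi^\lambda)(x)=\pm\binom{m}{(m_\psi)}\prod_\psi\dim\chi^{\mu^{(\psi)}}\cdot\dim\chi^{\core_p(\lambda)}\in\Z$ computed above (which, under the natural indexing, equals $\pm\chi^\lambda(x)$). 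Hence $\Q(\Gamma(\chi)(x))=\Q=\Q(\chi(x))$, and the proof is complete.

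The main obstacle I anticipate is the ``no cancellation'' half of the core step --- that $\chi^\lambda(x)\ne0$ for every $\lambda$ of the relevant $p$-weight --- where the heights of the rim hooks must be tracked carefully along interleaved strippings; the companion identity $|\cent{P}{x}|=|P:P'|$ governing $\Irr^x(P)$ when $p=2$ is the other delicate point. Everything else is bookkeeping with the classical character theory of $S_n$ and of wreath products once the classification of picky elements is in hand, and the same scheme --- reducing, via the two identifications of the core step, to facts already known for the McKay conjecture of $S_n$ --- should also settle the companion Conjecture~B for symmetric groups.
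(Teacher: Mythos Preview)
Your classification of picky $p$-elements for odd $p$ is incorrect, and this error propagates through the entire plan. You assert that for odd $p$ a picky element must be a product of at most $p-1$ disjoint $p$-cycles with at most $p-1$ fixed points, forcing $n<p^2$. In fact, the paper's Proposition~\ref{ClassificationPickySymmetric} shows that the $p$-adic elements (those with $a_i$ cycles of length $p^i$ for every $i$ in the $p$-adic expansion $n=\sum a_ip^i$) are always picky; in particular a $p^k$-cycle in $\mathsf{S}_{p^k}$ is picky for every $k$. So the Sylow normalizer is \emph{not} generally of the form $C_p^{\,m}\rtimes K$ with $K$ a $p'$-group, and your argument that every irreducible character of $\mathbf{N}_{\mathsf{S}_n}(P)$ has $p'$-degree collapses.

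More seriously, your ``technical heart'' --- that $\Irr^x(\mathsf{S}_n)=\Irr_{p'}(\mathsf{S}_n)$ for every picky $x$ --- is false. It holds for $p$-adic (Type~I) elements and for the Type~III elements at $p=3$, but it \emph{fails} for the Type~II picky $2$-elements when $8\mid n$. Already in $\mathsf{S}_8$ with $x=(1,2,3,4)(5,6)$ one has $|\mathbf{C}_{\mathsf{S}_8}(x)|=16$ while $|P:P'|=|\Lin(P)|=8$, so your identity $|\mathbf{C}_P(x)|=|P:P'|$ is wrong, and indeed $\chi^{(4,2,1,1)}(x)=2$ with $\chi^{(4,2,1,1)}(1)=70$ of even degree. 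The paper handles this via a genuinely different mechanism (Corollary~\ref{CasePower2} and Theorem~\ref{LastCase}): it shows that the extra characters in $\Irr^x\setminus\Irr_{2'}$ all take the value $\pm2$ at $x$, identifies them combinatorially through $2^{k-1}$-hooks and the partitions $\gamma(a,b,k)$, and builds the bijection $\Gamma$ on these characters by an inductive wreath-product construction, not by reduction to McKay. Your scheme of ``reduce everything to the McKay bijection'' therefore cannot work as stated; the $8\mid n$ Type~II case requires the additional structural analysis the paper carries out.
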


Let us also recall the celebrated McKay Conjecture, which asserts that
\[
|\Irr_{p'}(G)| = |\Irr_{p'}(\mathbf{N}_G(P))|,
\]
where $\Irr_{p'}(G) = \{ \chi \in \Irr(G) \mid \gcd(p, \chi(1)) = 1 \}$. This conjecture was recently proved by Cabanes and Späth \cite{CS}. It follows from Corollary 4.20 of \cite{N} that if $\chi \in \Irr_{p'}(G)$ and $x$ is a $p$-element of $G$, then $\chi(x)\ne 0$. Thus, $\Irr_{p'}(G) \subseteq \Irr^x(G)$ for any $p$-element $x$. Consequently, if $x$ is a picky $p$-element and the Picky Conjecture holds for $(G,x)$, then the McKay Conjecture follows for $G$.

Substantial progress has been made on the Picky Conjecture. For instance, Moretó, Navarro, and Rizo \cite{MNR} have proved a strong version of the conjecture for $p$-solvable groups when $p$ is odd. Malle and Schaeffer Fry \cite{Malle,MalleFry} have verified the conjecture for several families of groups of Lie type.

Our first main result establishes a strong version of the Picky Conjecture for symmetric groups. We introduce the following notation. Given a prime $p$ and $P \in \Syl_p(G)$, define
\[
\mathcal{P} = \{ x \in P \mid x \text{ is picky in } G \},
\]
and let
\[
\Irr^{\mathcal{P}}(G) = \{ \chi \in \Irr(G) \mid \chi(x) \ne 0 \text{ for some } x \in \mathcal{P} \}.
\]

\begin{thml}\label{thmA}
Let $p$ be a prime, let $n \geq 1$ be an integer, and let $P \in \Syl_p(\mathsf{S}_n)$. Then there exists a bijection
\[
\Gamma: \Irr^{\mathcal{P}}(\mathsf{S}_n) \rightarrow \Irr^{\mathcal{P}}(\mathbf{N}_{\mathsf{S}_n}(P))
\]
satisfying:
\begin{itemize}
    \item[(I)] $\Gamma(\chi)(1)_p = \chi(1)_p$ for every $\chi \in \Irr^{\mathcal{P}}(\mathsf{S}_n)$.
    \item[(II)] $\Gamma(\chi)(x) = \pm \chi(x)$ for every $\chi \in \Irr^{\mathcal{P}}(\mathsf{S}_n)$ and $x \in \mathcal{P}$.
    \item[(III)] $\Gamma(\Irr^x(\mathsf{S}_n)) = \Irr^x(\mathbf{N}_{\mathsf{S}_n}(P))$ for every $x \in \mathcal{P}$.
\end{itemize}
\end{thml}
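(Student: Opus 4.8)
The plan is to construct $\Gamma$ recursively, following the natural combinatorial decompositions of $\mathsf{S}_n$ and of $\mathbf{N}_{\mathsf{S}_n}(P)$, and then to check (I)--(III) by comparing Murnaghan--Nakayama computations on the symmetric-group side with wreath-product character theory on the normalizer side.

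\emph{Step 1 (structure and picky elements).} Writing $n=\sum_i a_ip^i$ in base $p$, we have $P\cong\prod_i P_i^{a_i}$ with $P_i\in\Syl_p(\mathsf{S}_{p^i})$, and correspondingly $\mathbf{N}_{\mathsf{S}_n}(P)\cong\prod_i\bigl(\mathbf{N}_{\mathsf{S}_{p^i}}(P_i)\wr\mathsf{S}_{a_i}\bigr)$ with $\mathbf{N}_{\mathsf{S}_{p^k}}(P_k)\cong\mathrm{AGL}_1(p)^{\wr k}$ acting on $p^k$ points. I would first recall or re-prove the classification of picky elements of $\mathsf{S}_n$ (cf.\ \cite{MMM}): describing a Sylow $p$-subgroup of $\mathsf{S}_{p^k}$ by a chain of imprimitivity block systems together with the local data needed to cut it down to a $p$-group, one checks that $x\in P$ is picky precisely when the $\langle x\rangle$-orbit structure determines all of these data uniquely; this is a recursive condition, it produces only finitely many $\mathsf{S}_n$-classes of picky elements, and exactly one when $p\ge5$ (the class acting as a $p^i$-cycle on each $p^i$-block). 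As $\chi^\lambda$ is rational-valued, $\Irr^x=\Irr^{x^j}$ for $\gcd(j,p)=1$, so only these classes intervene, and the product structure above reduces everything to the atomic pieces $\mathsf{S}_{p^k}$ and $\mathrm{AGL}_1(p)^{\wr k}$, glued by the Littlewood--Richardson rule and wreath-product Clifford theory over the $\mathsf{S}_{a_i}$.

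\emph{Step 2 (the two sides).} On the symmetric-group side, iterating the Murnaghan--Nakayama rule to strip off rim $p^i$-hooks identifies $\Irr^{\mathcal{P}}(\mathsf{S}_n)$ with the $\chi^\lambda$ admitting the required sequences of strippings down to $\varnothing$ without total cancellation; this set is strictly larger than $\Irr_{p'}(\mathsf{S}_n)$ in general --- for example $\chi^{(2,2,2,2)}\in\Irr^{\mathcal{P}}(\mathsf{S}_8)$ when $p=2$ (via the picky class of type $(4,2,1,1)$) although $2\mid\chi^{(2,2,2,2)}(1)$ --- so (I) carries real content. The same computation expresses $\chi^\lambda(x)$, for picky $x$, as $\pm$ a product, indexed by the blocks of $P$, of dimensions and of values $\chi^{\mu}(p^i\text{-cycle})=\pm1$ of hook characters $\mu$ of $\mathsf{S}_{p^i}$. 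On the normalizer side I would exploit that $\mathrm{AGL}_1(p)$ is a Frobenius group to run the Clifford theory of $\mathrm{AGL}_1(p)^{\wr k}$ transparently, getting an explicit parametrisation of $\Irr^{\mathcal{P}}(\mathbf{N}_{\mathsf{S}_n}(P))$ and formulas for its degrees and its values on picky elements in terms of the same combinatorial data.

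\emph{Step 3 (construction and verification).} One then defines the atomic bijection $\Irr^{\mathcal{P}}(\mathsf{S}_{p^k})\to\Irr^{\mathcal{P}}(\mathrm{AGL}_1(p)^{\wr k})$ matching hook partitions (and, for $p\in\{2,3\}$, their recursive analogues) with the characters carrying the corresponding data, and extends it multiplicatively and through the wreath products over the $\mathsf{S}_{a_i}$ --- essentially a refinement of the classical explicit McKay bijection for symmetric groups. Property (I) then holds because the $p$-part of the degree is read off the same data on both sides (hook-length formula and Kummer's theorem versus explicit wreath-product degrees); (II) holds because the Murnaghan--Nakayama value and the wreath-product value agree up to the sign recorded by the accumulated rim-hook heights; and (III) is vacuous for $p\ge5$ and, for $p\in\{2,3\}$, follows because the condition ``$\chi^\lambda(x)\ne0$'' translates under $\Gamma$ to the matching nonvanishing condition on the normalizer side, both being encoded by the same stripping/parametrisation data. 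The step I expect to be the hardest is the bookkeeping underlying Steps 2--3: getting the parametrisation of $\Irr^{\mathcal{P}}(\mathbf{N}_{\mathsf{S}_n}(P))$, and especially the degrees and character values on picky elements, to line up with the Murnaghan--Nakayama output all the way through the recursion --- pinning down the signs in (II) and the $p$-parts in (I) --- and, for $p=2$ and $p=3$, correctly handling the several families of picky elements (whose classification is already delicate) so that (III) holds simultaneously for all of them.
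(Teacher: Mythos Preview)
Your overall strategy --- classify the picky $p$-elements, compute $\Irr^{\mathcal{P}}$ on both sides via Murnaghan--Nakayama and wreath-product Clifford theory, and then match the resulting combinatorial data to build $\Gamma$ as a refinement of the explicit McKay bijection --- is precisely the route the paper takes, including the reduction to the atomic pieces $\mathsf{S}_{p^k}$ and the separate treatment of the extra picky classes for $p\in\{2,3\}$.

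Two factual errors would trip you up in execution, however. First, $\mathbf{N}_{\mathsf{S}_{p^k}}(P_k)$ is \emph{not} $\mathrm{AGL}_1(p)^{\wr k}$ when $p>2$: the normalizer has order $|P_k|\cdot(p-1)^k$, whereas the $k$-fold iterated wreath product has order $(p(p-1))^{(p^k-1)/(p-1)}$; the correct structure is $P_k\rtimes(C_{p-1})^k$. Your Step~2 Clifford theory would therefore be running on the wrong group, and the parametrisation of $\Irr^{\mathcal{P}}(\mathbf{N}_{\mathsf{S}_n}(P))$ would not come out right. Second, your illustrative example is incorrect: $\chi^{(2,2,2,2)}$ \emph{does} vanish on the picky class of cycle type $(4,2,1,1)$ in $\mathsf{S}_8$ (the two $4$-rim-hook contributions cancel). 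The non-$2'$ characters in $\Irr^{\mathcal{P}}(\mathsf{S}_8)$ are $\chi^{(4,2,1,1)}$ and $\chi^{(3,3,2)}$, each taking value $\pm2$ there. Neither error invalidates the approach, but both confirm your own suspicion that the bookkeeping is where the work lies; the paper organises it via the $p$-core tower, which makes the recursion, the degree $p$-parts in~(I), and the uniqueness of the relevant rim-hook strippings simultaneously transparent.
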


Note that $\mathcal{P} \ne \varnothing$ by Theorem 4.2 of \cite{MMM}. We emphasize that condition (II) does not hold in general for arbitrary groups. Following the proofs of Lemma \ref{TypeI} and Theorems \ref{LastCase} and \ref{OddCase}, one can explicitly compute the values $\chi(x)$ up to sign for any picky element $x \in \mathsf{S}_n$ and any $\chi \in \Irr^x(\mathsf{S}_n)$ (or $\chi \in \Irr^x(\mathbf{N}_{\mathsf{S}_n}(P))$).

There exists a more general version of the Picky Conjecture for arbitrary $p$-elements. To formulate it, we introduce a further concept.

Given $x \in G$, define the {\bf subnormalizer} of $x$ in $G$ by
\[
\Sub_G(x) = \langle g \in G \mid \langle x \rangle \trianglelefteq \trianglelefteq \langle x, g \rangle \rangle.
\]
As shown in \cite{MRPicky}, using results from \cite{Casolo1}, a $p$-element $x \in P$ is picky if and only if $\Sub_G(x) = \mathbf{N}_G(P)$. This leads to the following conjecture.

\begin{conB}[Subnormalizer Conjecture]
Let $p$ be a prime, let $G$ be a group, and let $x \in G$ be a $p$-element. Then there exists a bijection
\[
\Gamma: \Irr^x(G) \rightarrow \Irr^x(\Sub_G(x))
\]
satisfying:
\begin{itemize}
    \item[(I)] $\Gamma(\chi)(1)_p = \chi(1)_p$ for every $\chi \in \Irr^x(G)$.
    \item[(II)] $\Q(\Gamma(\chi)(x)) = \Q(\chi(x))$ for every $\chi \in \Irr^x(G)$.
\end{itemize}
\end{conB}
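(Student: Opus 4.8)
Since the Subnormalizer Conjecture is stated for an arbitrary finite group while this paper concerns symmetric groups, the plan is to establish it in the case $G=\mathsf{S}_n$, where it refines Theorem~\ref{thmA}. Indeed, when $x$ is picky one has $\Sub_{\mathsf{S}_n}(x)=\mathbf{N}_{\mathsf{S}_n}(P)$, so that case is already covered, and the genuinely new content lies in the non-picky $p$-elements. A first simplification is that the field-of-values condition (II) collapses to a rationality statement: because the irreducible characters of $\mathsf{S}_n$ are rational-valued, $\Q(\chi(x))=\Q$ whenever $\chi(x)\neq 0$, so (II) amounts to showing that every $\psi\in\Irr^x(\Sub_{\mathsf{S}_n}(x))$ in the image of $\Gamma$ satisfies $\psi(x)\in\Q$. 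This is the exact analogue of the stronger assertion $\Gamma(\chi)(x)=\pm\chi(x)$ proved for the picky case, and I would aim for the same sign-precise statement here.

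The central task is to determine $\Sub_{\mathsf{S}_n}(x)$ explicitly. Write $x$ as a product of disjoint cycles of $p$-power lengths, with $m_a$ cycles of length $p^a$ for $a\geq 1$ and $f$ fixed points, and choose $P\in\Syl_p(\mathsf{S}_n)$ with $x\in P$. Two containments hold in general and pin the group down from below: $\mathbf{N}_{\mathsf{S}_n}(P)\subseteq\Sub_{\mathsf{S}_n}(x)$, since $x$ lies in the normal Sylow subgroup $P=\mathbf{O}_p(\mathbf{N}_{\mathsf{S}_n}(P))$; and $\mathsf{S}_{\fix(x)}\subseteq\Sub_{\mathsf{S}_n}(x)$, since a point-wise fixed block gives elements commuting with $x$, for which $\langle x\rangle$ is a direct factor, hence a normal subgroup, of $\langle x,g\rangle$. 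Using the standard reformulation $\langle x\rangle\trianglelefteq\trianglelefteq\langle x,g\rangle\Leftrightarrow x\in\mathbf{O}_p(\langle x,g\rangle)$ (a subnormal $p$-subgroup lies in $\mathbf{O}_p$), together with the results of \cite{Casolo1} identifying the generated subnormalizer with the set of such $g$, I would compute $\Sub_{\mathsf{S}_n}(x)$ as an explicit iterated product of symmetric groups, cyclic normalizers $\mathbf{N}_{\mathsf{S}_{p^a}}(\langle c\rangle)$, and wreath products. The shape depends delicately on how the fixed points interact with the moved points: when the fixed points cannot supply an additional length-$p^a$ ``slot'' the element is picky and $\Sub_{\mathsf{S}_n}(x)=\mathbf{N}_{\mathsf{S}_n}(P)$, whereas free slots force full symmetric factors and block-permuting elements into $\Sub_{\mathsf{S}_n}(x)$, enlarging it and, in extreme cases, making it all of $\mathsf{S}_n$. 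I expect this structural determination, with its case analysis driven by the $p$-adic block budget of $n$ against the cycle type of $x$, to be the main obstacle.

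With the structure in hand, the bijection can be built combinatorially. On the $\mathsf{S}_n$ side, the Murnaghan--Nakayama rule describes which $\chi^\lambda$ satisfy $\chi^\lambda(x)\neq 0$ and controls the $p$-part $\chi^\lambda(1)_p$ through the $p$-core and $p$-quotient of $\lambda$; on the subnormalizer side, the irreducible characters and their values at $x$ are accessible through the Clifford theory of the wreath-product factors. I would match the two sets so as to preserve the $p$-part of the degree, giving (I), and the value at $x$ up to sign, giving (II), by reducing each homogeneous factor to the single-length computations already carried out for Theorem~\ref{thmA} and assembling them multiplicatively across the direct factors of $\Sub_{\mathsf{S}_n}(x)$. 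The compatibility of the block-permuting (wreath) part with $\chi^\lambda(x)$, where several rim-hook removals contribute and signs must be tracked, is where I anticipate the remaining technical difficulty; but the rationality of the resulting values, and hence (II), will follow from the explicit formulas exactly as in the picky case.
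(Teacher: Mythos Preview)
The statement you are addressing is Conjecture~B, which the paper does \emph{not} prove; it is stated as an open conjecture. The paper establishes only the special case $G=\mathsf{S}_n$, $p=2$ (Theorem~\ref{thmB}), and says explicitly that the odd-prime case for symmetric groups remains open and ``will require new techniques beyond those used in this paper''. Your plan does not restrict to $p=2$, so it overshoots what the paper achieves or claims.

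Even restricting to $p=2$ and comparing with the paper's actual argument for Theorem~\ref{thmB}, your structural picture of $\Sub_{\mathsf{S}_n}(x)$ is off in a way that matters. The paper computes subnormalizers via block structures (Theorem~\ref{SylowBlock}) together with Proposition~\ref{MalleSub}, and the outcome is a sharp dichotomy: in $\mathsf{S}_{2^k}$ (Theorem~\ref{Subnormalizers}) the subnormalizer is the \emph{whole} group unless $x$ is a $2^k$-cycle or $o(x)=2^{k-1}$ with $\fix(x)>0$, in which case it equals $\Sub_{\mathsf{S}_{2^{k-1}}}(y)\wr\mathsf{S}_2$; for general $n$ (Theorem~\ref{Reduction2}) properness is governed by a gap inequality $2^{m_t}>\sum_{i<t}\ell_i 2^{m_i}$ among the cycle lengths, and the subnormalizer is then a direct product of two smaller subnormalizers. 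Your anticipated description as a product involving cyclic normalizers $\mathbf{N}_{\mathsf{S}_{p^a}}(\langle c\rangle)$, and your heuristic that pickiness occurs ``when the fixed points cannot supply an additional length-$p^a$ slot'', do not match these results.

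Finally, the paper's construction of $\Gamma$ is an induction on $k$ (Theorem~\ref{TheoremB}) whose engine is an auxiliary compatibility condition (III): on $\Irr_{2'}$ one must have $\Gamma(\chi)(x)=\varepsilon\chi(x)$ and $\Gamma(\chi)(g)=\varepsilon\chi(g)$ with the \emph{same} sign $\varepsilon$, where $g$ is a $2^k$-cycle. This coherence is exactly what forces the two Murnaghan--Nakayama terms for the double-hook partitions $\gamma(a,b,k)$ to combine to $\pm\chi^{\lambda}(x)$ rather than to an uncontrolled sum. You correctly flag the sign-tracking across multiple rim hooks as the remaining difficulty, but you do not propose a mechanism; condition~(III) is that mechanism, and without it the inductive step does not close.
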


Progress on this stronger conjecture has been limited, in part due to the difficulty of describing subnormalizers explicitly. In particular, the conjecture remains open for $p$-solvable groups, although some results have been obtained for groups of Lie type by Malle \cite{Malle,Malle2}.

Our second main result proves a strong form of the Subnormalizer Conjecture for symmetric groups when $p = 2$.

\begin{thml}\label{thmB}
Let $n \geq 1$ and let $x \in \mathsf{S}_n$ be a $2$-element. Then there exists a bijection
\[
\Gamma: \Irr^x(\mathsf{S}_n) \rightarrow \Irr^x(\Sub_{\mathsf{S}_n}(x))
\]
satisfying:
\begin{itemize}
    \item[(I)] $\Gamma(\chi)(1)_2 = \chi(1)_2$ for every $\chi \in \Irr^x(\mathsf{S}_n)$.
    \item[(II)] $\Gamma(\chi)(x) = \pm \chi(x)$ for every $\chi \in \Irr^x(\mathsf{S}_n)$.
\end{itemize}
\end{thml}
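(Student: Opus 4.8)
The plan is to reduce Theorem~B to a statement about the building blocks of the situation, namely wreath products, and then handle those building blocks by the explicit results on picky elements already cited (Lemma~\ref{TypeI} and Theorems~\ref{LastCase}, \ref{OddCase}). First I would recall that a $2$-element $x \in \mathsf{S}_n$ is determined up to conjugacy by its cycle type, all of whose parts are powers of $2$; writing the multiset of cycle lengths and grouping the fixed points, $x$ permutes $k$ points nontrivially and fixes $n-k$, and one may write $x$ as a ``product'' of its action on disjoint orbits. The key structural input is a description of $\Sub_{\mathsf{S}_n}(x)$: I expect that if $x$ decomposes as $x_1 \cdots x_r$ on disjoint sets of sizes $n_1, \dots, n_r$ (with $n_0 = n - \sum n_i$ fixed points), then $\Sub_{\mathsf{S}_n}(x)$ is built from the subnormalizers $\Sub_{\mathsf{S}_{n_i}}(x_i)$ together with wreath-product bookkeeping coming from equal blocks and from the action on the fixed points; in the $p=2$ case, the fixed-point part contributes a full symmetric group $\mathsf{S}_{n_0}$ (since a trivial $2$-element has subnormalizer everything only when... actually $\Sub_{\mathsf{S}_{n_0}}(1) = \mathsf{S}_{n_0}$). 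So the first real step is to pin down this formula for $\Sub_{\mathsf{S}_n}(x)$ precisely, presumably by citing or adapting the subnormalizer computations from \cite{Casolo1} and the characterization $\Sub_G(x) = \mathbf{N}_G(P)$ when $x$ is picky.

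**The inductive / combinatorial core.**

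With the structure of $\Sub_{\mathsf{S}_n}(x)$ in hand, I would set up an induction on $n$. The base case is $x$ a single $2^m$-cycle (or $x$ fixed-point-free and ``homogeneous''), where $x$ is picky in the relevant $\mathsf{S}_{2^m}$ — here Theorem~A already delivers a bijection $\Gamma$ with $\Gamma(\chi)(x) = \pm\chi(x)$ and $\Gamma(\chi)(1)_2 = \chi(1)_2$, so there is nothing new. For the inductive step, the characters of $\mathsf{S}_n$ with $\chi(x) \neq 0$ are controlled by the Murnaghan--Nakayama rule: removing the $2$-hooks and $2^m$-hooks forced by the cycle type of $x$, $\chi(x)$ is (up to sign) a product of character values of the ``cores/quotients'' on the pieces of $x$, and $\chi(x) = 0$ unless $\chi$ restricts compatibly to the Young subgroup matching the orbit decomposition of $x$. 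The parallel statement on the $\Sub$ side — characters of a wreath product $\mathsf{S}_{n_0} \times \prod (\text{stuff}) \wr \mathsf{S}_{a_j}$ — is given by Clifford theory over the base group together with the known irreducible characters of wreath products of symmetric groups. I would then build $\Gamma$ multiplicatively: match an irreducible of $\mathsf{S}_n$ surviving at $x$ with the tuple of its ``local data'', apply the inductively-known bijections on each piece, and reassemble on the $\Sub$ side. Properties (I) and (II) are preserved because the $2$-part of the degree and the character value at $x$ are each \emph{multiplicative} over the orbit decomposition (degrees by the hook-length formula applied to the $2$-quotient, values by Murnaghan--Nakayama), and the wreathing by $\mathsf{S}_{a_j}$ contributes the \emph{same} factor (the degree and value of an $\mathsf{S}_{a_j}$-character, evaluated at the image of $x$, which is trivial in that coordinate) on both sides.

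**The main obstacle.**

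The hard part will be two intertwined issues. First, getting the subnormalizer formula exactly right: subnormalizers do not in general behave well under direct products or passage to overgroups, and the reduction ``$\Sub_{\mathsf{S}_n}(x)$ is a product/wreath product of the local pieces'' needs genuine proof, not just analogy with the picky case; in particular one must be careful about how orbits of $x$ of the \emph{same} length interact (they get permuted, contributing the $\mathsf{S}_{a_j}$ factors) versus orbits of different lengths, and about whether the full $\mathsf{S}_{n_0}$ on fixed points really appears. Second, controlling character \emph{zeros} on both sides simultaneously: I must show that $\chi(x) \neq 0$ in $\mathsf{S}_n$ if and only if the matched character is nonzero at $x$ in $\Sub_{\mathsf{S}_n}(x)$, and this requires the combinatorial heart — a careful Murnaghan--Nakayama bookkeeping showing that the ``non-vanishing'' condition (each successive hook removal being possible, equivalently the $2$-core tower of $\chi$ matching that of $x$) translates exactly into the Clifford-theoretic non-vanishing condition on the wreath-product side. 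The sign bookkeeping in (II) — that $\Gamma(\chi)(x) = \pm\chi(x)$ rather than merely $\Q(\Gamma(\chi)(x)) = \Q(\chi(x))$ — comes for free once the matching is multiplicative, since all the relevant values are rational integers (character values of symmetric groups and their wreath products at $2$-elements lie in $\Q$), so this is the ``easy'' improvement over Conjecture~B's statement (II), but it does rely on having made the bijection genuinely combinatorial rather than abstract.
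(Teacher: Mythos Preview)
Your structural guess for $\Sub_{\mathsf{S}_n}(x)$ is the central gap, and it is wrong, not merely imprecise. You expect the subnormalizer to decompose as $\mathsf{S}_{n_0} \times \prod_j (\text{local piece}) \wr \mathsf{S}_{a_j}$, stratified by orbit lengths; that is the shape of the Sylow normalizer, but subnormalizers behave very differently. In fact, if $x$ is a product of $\ell \geq 2$ disjoint cycles all of the \emph{same} length $2^m$, then $\Sub_{\mathsf{S}_{\ell 2^m}}(x) = \mathsf{S}_{\ell 2^m}$ is the full symmetric group (Lemma~\ref{Reduction1}), so the theorem is trivial there --- no wreath product over $\mathsf{S}_{a_j}$ ever materialises. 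More generally (Theorem~\ref{Reduction2}), $\Sub_{\mathsf{S}_n}(x)$ is proper if and only if the cycle-length multiset has a ``gap'' --- some cycle length $2^{m_t}$ exceeds the total length of all strictly shorter cycles --- and then $\Sub_{\mathsf{S}_n}(x) = \Sub_{\mathsf{S}_k}(y) \times \Sub_{\mathsf{S}_{n-k}}(z)$ is a \emph{direct} product split at that gap. The only wreath product that appears in the whole argument is $\Sub_{\mathsf{S}_{2^{k-1}}}(y) \wr \mathsf{S}_2$, arising in the special case $n = 2^k$, $o(x) = 2^{k-1}$, $\fix(x) > 0$ (Theorem~\ref{Subnormalizers}); this case carries its own induction on $k$ with a three-way split of $\Irr^x(\mathsf{S}_{2^k})$ according to whether $\lambda$ is a hook, has one $2^{k-1}$-hook, or has two, and it needs an auxiliary sign condition (III) at the picky element $g$ to make the induction close.

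A secondary issue: invoking Theorem~\ref{LastCase} (or Theorem~A) as a black box is circular in the paper's logical order, since Theorem~\ref{LastCase} is \emph{deduced from} Theorem~B applied to picky elements of Type~II. The only prior input you may legitimately use for the single-cycle base case is Lemma~\ref{TypeI}. Your multiplicative scheme for transporting (I) and (II) via Murnaghan--Nakayama is correct in spirit once the right subnormalizer structure is in place, but because that structure is a direct product along a gap rather than a wreath product along orbit types, the actual induction variable is the number of terms in the $2$-adic expansion of $n$, and the combinatorics of the $n=2^k$ step is where all the work lies.
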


An important step in the proof of Theorem B is to determine the structure of subnormalizers of $2$-elements of the symmetric groups (see Theorems \ref{Subnormalizers} and \ref{Reduction2}).

The case of odd primes in symmetric groups remains open and is the subject of ongoing work. It will require new techniques beyond those used in this paper.

Stronger versions of the Picky and Subnormalizer Conjectures, reflecting the structure of Theorems \ref{thmA} and \ref{thmB}, will be presented in \cite{MRPicky}.

The rest of the paper is structured as follows. In Section \ref{Preliminaries} we recall the necessary background and establish notation. Section \ref{SubnormalPart} is devoted to the proof of Theorem \ref{thmB}, while Section \ref{PickyPart} contains the proof of Theorem \ref{thmA}.

Our approach makes use of several techniques, including the $p$-core tower. These were the key tool in  MacDonald's \cite{MP} proof of the McKay Conjecture for symmetric groups.


\section{Preliminaries}\label{Preliminaries}

\subsection{Character theory of symmetric groups}

The general definitions and results about the character theory in $\mathsf{S}_n$ have been taken from \cite{JK}. Let $n\geq 1$ be an integer. A partition of $n$ is a sequence $\lambda=(\lambda_1,\ldots,\lambda_k)$ of positive integers such that $\lambda_{i}>\lambda_{i+1}$ and $\sum_{i=1}^k\lambda_i=n$. Given a partition $\lambda$, we can associate a Young diagram to $\lambda$ by using $\lambda_1$ boxes in the first row, $\lambda_2$ boxes in the second row and so on. 

Since two elements of $\mathsf{S}_n$ are conjugate if and only if they have the same cycle type, we deduce that there exists a $1$ to $1$ map from the set of partitions of $n$ to the set of conjugacy classes of $\mathsf{S}_n$ and also to the set of irreducible characters of $\mathsf{S}_n$. Moreover, given a partition $\lambda$ of $n$, we have that there exists a ``canonical'' irreducible character $\chi^{\lambda}\in \Irr(\mathsf{S}_n)$. In addition, $\chi^{\lambda}(1)$ can be calculated from  the Young diagram of $\lambda$. The hook number of a box $b$ of the Young diagram of $\lambda$ is  the sum of the number of boxes below the box $b$, plus the number of boxes to the right of $b$, plus $1$ (for the box $b$ itself). 

The hook length formula, proved by Frame, Robinson and Thrall \cite{FRT}, asserts that 
$$\chi^{\lambda}(1)=\frac{n!}{\prod_{i,j}H^{\lambda}(i,j)},$$
where $H^{\lambda}(i,j)$ denotes the hook number of the entry $(i,j)$ of the Young diagram of $\lambda$.  We will write $H(\lambda)$ to denote the denominator in the above quotient.

For example,  for $\lambda=(4,3,1)$, we have that 
$$\chi^{\lambda}(1)=\frac{8!}{6\cdot 4\cdot 3\cdot 1\cdot4\cdot 2 \cdot 1\cdot 1}=70.$$

The hook length formula provides an algorithm for computing $\chi^{\lambda}(1)$. For a general $x \in \mathsf{S}_n$ it is possible to calculate $\chi^{\lambda}(x)$ recursively by applying the Murnaghan--Nakayama rule (see 2.4.7 of \cite{JK}). Let $\lambda$ be a partition of an integer $n$ and let $q>1$ be an integer. A $q$-hook of $\lambda$ is a hook $(i,j)$ in the Young diagram of $\lambda$ with $H^{\lambda}(i,j)=q$. Let $\tau$ be the $q$-hook corresponding to the boxes below $(i,j)$, the boxes to the right  of $(i,j)$ and the box $(i,j)$ itself. We write $\lambda\setminus \tau$ to denote the partition (of $n-q$) obtained by removing $\tau$ form the Young diagram of $\lambda$. We recall that the height, $h(\tau)$ is defined as $1$ less than the  number of rows of $\tau$.

\begin{thm}[Murnaghan--Nakayama rule]
Let $0<q\leq n$ be integers. Suppose that $\rho\pi \in \mathsf{S}_n$, where $\rho$ is an $q$-cycle and $\pi$ is a permutation of the remaining $n-q$ numbers. If $\lambda$ is a partition of $n$, then 
$$\chi^{\lambda}(\rho\pi )=\sum_{\tau}(-1)^{h(\tau)}\chi^{\lambda\setminus \tau}(\pi),$$
where the sum runs over all  $q$-hooks $\tau$ in $\lambda$.
\end{thm}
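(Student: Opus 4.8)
The Murnaghan--Nakayama rule is classical, and in a paper of this kind one would most naturally cite \cite{JK}; but here is the approach I would take to prove it. The plan is to pass to symmetric functions via the Frobenius characteristic map, which is the ring isomorphism from the ring of virtual characters of the symmetric groups onto the ring of symmetric functions that sends $\chi^{\lambda}$ to the Schur function $s_{\lambda}$ and turns evaluation on the class of cycle type $\mu=(\mu_1,\mu_2,\dots)$ into the Hall pairing with the power sum product $p_{\mu}=p_{\mu_1}p_{\mu_2}\cdots$; concretely $\chi^{\lambda}(\mu)=\langle s_{\lambda},p_{\mu}\rangle$. Writing the cycle type of $\rho\pi$ as the part $q$ together with the cycle type $\mu'$ of $\pi$, and using the adjointness $\langle fg,h\rangle=\langle g,f^{\perp}h\rangle$ for the Hall pairing, one sees that the rule is equivalent to the single identity
\[
p_{q}^{\perp}s_{\lambda}=\sum_{\tau}(-1)^{h(\tau)}\,s_{\lambda\setminus\tau},
\]
the sum over the $q$-hooks (equivalently, border strips of size $q$) $\tau$ of $\lambda$, with $h(\tau)$ the height from the statement; indeed, pairing both sides with $p_{\mu'}$ and translating back gives exactly the asserted recursion. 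This is in turn equivalent, again by adjointness, to the Pieri-type formula $p_{q}\,s_{\mu}=\sum_{\lambda}(-1)^{h(\lambda/\mu)}s_{\lambda}$, where $\lambda$ runs over the partitions obtained from $\mu$ by adding a border strip of size $q$.

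To prove this Pieri rule I would use the bialternant (Jacobi) formula $s_{\mu}=a_{\mu+\delta}/a_{\delta}$, where $\delta=(k-1,k-2,\dots,1,0)$, $k$ bounds the number of parts, and $a_{\alpha}=\det(x_i^{\alpha_j})=\sum_{w}\operatorname{sgn}(w)\,x^{w(\alpha)}$ is the corresponding alternant. Since $a_{\delta}$ is a non-zero-divisor, it suffices to establish
\[
p_{q}\,a_{\mu+\delta}=\sum_{\lambda}(-1)^{h(\lambda/\mu)}\,a_{\lambda+\delta}.
\]
Expanding $p_{q}=\sum_{i=1}^{k}x_i^{q}$ and multiplying by $a_{\mu+\delta}=\sum_{w}\operatorname{sgn}(w)\,x^{w(\mu+\delta)}$, every monomial on the left has exponent vector $w(\mu+\delta)+q\,e_i$. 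Collecting terms, all contributions whose exponent vector has a repeated entry cancel under antisymmetrization, and the remaining ones reassemble into alternants $a_{\beta}$, where $\beta$ is obtained from $\mu+\delta$ by adding $q$ to a single coordinate and resorting into strictly decreasing order; the sign absorbed is $(-1)^{j}$, with $j$ the number of coordinates this entry must pass during the sort.

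The last step, and the one I expect to carry the real content, is the combinatorial translation via beta-numbers: replacing a coordinate $\mu_i+\delta_i$ by $\mu_i+\delta_i+q$ (when the resulting set of beta-numbers is legal) corresponds bijectively to adding to $\mu$ a border strip $\tau$ of size $q$, yielding $\lambda=\mu\cup\tau$, and under this bijection the number $j$ of jumps equals the height $h(\tau)$ — this is the classical ``abacus'' correspondence between $q$-hooks and bead moves, and verifying both the bijection and the sign identity $h(\tau)=j$ is the point that requires care; the rest is formal manipulation in the ring of symmetric functions. An alternative, purely representation-theoretic route would restrict $\chi^{\lambda}$ along $\mathsf{S}_{q}\times\mathsf{S}_{n-q}\leq\mathsf{S}_n$ via the Littlewood--Richardson rule, use that the character of $\mathsf{S}_{q}$ on a $q$-cycle vanishes off the hook characters $\chi^{(q-j,1^{j})}$ where it equals $(-1)^{j}$, and then invoke the identity $\sum_{j}(-1)^{j}c^{\lambda}_{\mu,(q-j,1^{j})}$ equals $(-1)^{h(\tau)}$ if $\lambda/\mu$ is a border strip $\tau$ of size $q$ and $0$ otherwise; this merely relocates the same combinatorial difficulty into the Littlewood--Richardson coefficients.
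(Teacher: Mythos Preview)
Your instinct is correct: the paper does not prove this theorem at all. It is stated in the preliminaries as a background result, with the sentence just before it pointing to 2.4.7 of \cite{JK}; no argument is given.

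Your sketch via the Frobenius characteristic map, the bialternant formula $s_{\mu}=a_{\mu+\delta}/a_{\delta}$, and the beta-number/abacus interpretation of adding a border strip is one of the standard proofs (essentially the one in Macdonald \cite{MB}, and close in spirit to the treatment in \cite{JK}), and it is sound as outlined. The only place that carries real content is exactly the one you flag: that moving a bead $q$ positions on the abacus corresponds to adding a border strip of size $q$, and that the number of beads jumped equals the height $h(\tau)$. This is a routine verification once the bijection between partitions and beta-sets is set up, so there is no gap here --- just more to write than to think. Your alternative route through restriction to $\mathsf{S}_q\times\mathsf{S}_{n-q}$ and the hook-character values is also valid and, as you say, relocates the same combinatorics into a signed sum of Littlewood--Richardson coefficients.
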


We remark that if $\lambda$ does not possess any $q$-hook, then $\chi^{\lambda}(\rho\pi )=0$. Now, we introduce the $p$-quotient and $p$-core of a partition. We will follow the construction provided by \cite{MB}  (see pages 12 and 13 of that book).

 If we remove the $q$-hooks successively (no matter the order for removing) till we arrive at a partition without $q$-hooks, then we obtain the $q$-core of $\lambda$, which we denote by $C_{q}(\lambda)$. It is possible to prove that the $q$-core of $\lambda$ is uniquely determined by $\lambda$ and $q$. We will say that $\lambda$ is a $q$-core if $\lambda=C_q(\lambda)$.

Let $n$ be an  integer and let $m\geq n$ be a fixed integer. Let $\lambda=(\lambda_1,\ldots,\lambda_m)$ with $\lambda_1\geq \lambda_2\geq \ldots\geq \lambda_m\geq 0$ be a partition of $n$ (we add 0's among the $\lambda_i$'s to ensure that  the length is $m$). We define $\delta_m:=(m-1,m-2,\ldots,1,0)$ and $\varepsilon=\lambda+\delta_m$. We notice that $\varepsilon_{i+1}>\varepsilon_{i}$ for every $i$. Let $q>1$ be an integer. For a fixed $r\in \{0,\ldots, q-1\}$, let us assume that $\varepsilon_{i_1}>\ldots>\varepsilon_{i_{m_r}}$ are all the $\varepsilon_i$ such that $\varepsilon_i\equiv r \pmod q$ ($m_r$ is $0$ when there are none of them). We write these numbers as $q\cdot\varepsilon_{k}^{r}+r$ for $\varepsilon_{1}^{r}>\varepsilon_{2}^{r}>\ldots > \varepsilon_{m_r}^{r}$. We define $\lambda_{k}^{r}:=\varepsilon_{k}^{r}-m_r+k$ and $\lambda^r=(\lambda_1^r,\ldots, \lambda_{m_r}^r)$. The collection $(\lambda^0,\lambda^1,\ldots ,\lambda^{q-1})$ is the $q$-quotient of $\lambda$.

It is also possible to calculate the $q$-core of $\lambda$ by using these numbers. Let us consider the $m$ numbers $q\cdot s+r$ for $0\leq s\leq m_{r}-1$ and $0\leq r \leq q-1$. Let us arrange these numbers in descending order $\gamma_1>\gamma_2>\ldots > \gamma_m$. Then we define $\tilde{\lambda}_i=\gamma_i-m+i$ for $1 \leq i \leq m$. It is possible to see that $C_q(\lambda)=(\tilde{\lambda}_1,\ldots, \tilde{\lambda}_m)$.

We notice that $\lambda$ is uniquely determined by its $q$-core and its $q$-quotient. In addition,
$$|\lambda|=|C_q(\lambda)|+q(\sum_{r=0}^{q-1}|\lambda^{r}|).$$

Let $\lambda$ be a partition and let $p$ be a prime.  We define $\lambda_{00}:=C_p(\lambda)$. Given $r\in \{0,\ldots,p-1\}$, the partition $\lambda^r$ can be uniquely determined by its $p$-core $\lambda_{1 r}$ and its $p$-quotient $(\lambda^{r,0}, \ldots , \lambda^{r,p-1})$. We can iterate this process to obtain a set of $p$-cores $T_C(\lambda):=\{\lambda_{ij}\mid0\leq i, 0\leq j \leq p^i-1\}$, which will be called the $p$-core tower of $\lambda$. Assume that $n=\sum_{k= 0}^fa_kp^k$ is the $p$-adic expansion of $n$.  It is possible to see that $\lambda_{ij}=(0)$ for any $i >f$. Since any partition is completely determined by its $p$-core and its $p$-quotient, we have that any partition is uniquely determined by its $p$-core tower.  It follows that $T_C$ defines a bijection from the set of partitions   to the set of $p$-core towers.  Moreover, removing a $p^f$-hook  from the Young diagram of $\lambda$ is equivalent to removing one box in the Young diagram of $\lambda_{fj}$ for some $0\leq j\leq p^f-1$.

Let $p$ be a prime and let $m$ be an integer. We write $\nu_p(m)=a$ if $p^a$ divides $m$ but $p^{a+1}$ does not  divide $m$. For each $k$, we define $b_k= \sum_{j=0}^{p^k-1}|\lambda_{k,j}|$. It is possible to prove that $n = \sum_{k \geq 0}b_k p^k$. Then, following the discussion in \cite{MP}, we have 
$$\nu_p(n!)=\frac{1}{p-1}(n-\sum_{k\geq 0}a_k)$$
and 
$$\nu_p(H(\lambda))=\frac{1}{p-1}(n-\sum_{k\geq 0}b_k).$$
Thus, we have 
\begin{equation}
\label{FormulapPart}
\nu_p(\chi^{\lambda}(1))=\nu_p(n!)-\nu_p(H(\lambda))=\frac{1}{p-1}(\sum_{k\geq 0}b_k-\sum_{k\geq 0}a_k).
\end{equation}

It follows that $\chi^{\lambda}$ is a $p'$-degree character if and only if $\sum_{k\geq 0}b_k=\sum_{k\geq 0}a_k$.  In fact, it was proved in \cite{MP} that this happens if and only if $a_k=b_k$ for any $k \geq 1$.  From this discussion, we can deduce the following results.

\begin{thm}\label{2PrimeDegree}
Let $n$ be an integer and let $n=\sum_{i=1}^{f}2^{n_i}$ with $0\leq n_1<n_2\ldots <n_f$ be the $2$-adic expansion of $n$.  Then $|\Irr_{2'}(\mathsf{S}_n)|= 2^{\sum_{i=1}^fn_i}$.
\begin{proof}
This is Corollary 1.3 of \cite{MP}.
\end{proof}
\end{thm}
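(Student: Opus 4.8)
The statement to prove is Theorem~\ref{2PrimeDegree}, which is attributed to Corollary~1.3 of \cite{MP}; so the task is really to reconstruct Macdonald's argument from the machinery already set up in the Preliminaries.

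The plan is to count $p'$-degree irreducible characters of $\mathsf{S}_n$ directly via the $p$-core tower, specializing to $p=2$ at the end. First I would recall the key fact established in the excerpt: by \eqref{FormulapPart}, $\chi^\lambda$ has $p'$-degree if and only if $\sum_{k\ge 0} b_k = \sum_{k\ge 0} a_k$, and Macdonald's refinement says this forces $b_k = a_k$ for every $k$. So the problem reduces to counting $p$-core towers $\{\lambda_{ij}\}$ with $b_k = \sum_{j=0}^{p^k-1}|\lambda_{k,j}| = a_k$ for all $k$. Since $T_C$ is a bijection from partitions of $n$ to $p$-core towers, and the tower sizes decompose as $n = \sum_k b_k p^k$ with each level $k$ contributing partitions $\lambda_{k,0}, \dots, \lambda_{k,p^k-1}$ whose total size is $b_k$, the count factors as a product over levels $k$: for each $k$, we must choose a tuple $(\lambda_{k,0},\dots,\lambda_{k,p^k-1})$ of $p$-cores with $\sum_j |\lambda_{k,j}| = a_k$.

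The second step is the arithmetic heart: counting, for fixed level $k$, the number of $p^k$-tuples of $p$-core partitions of total size $a_k$, where $0 \le a_k \le p-1$. Here I would invoke the classical fact that a partition of size $m < p$ is automatically a $p$-core (it has no hook of length $p$, since all hook lengths are at most $n < p$ — more precisely a partition of $m$ has largest hook length at most $m$). Hence, at level $k$, we are simply counting ordered $p^k$-tuples of arbitrary partitions whose sizes sum to $a_k$. For $p=2$ this is especially clean: $a_k \in \{0,1\}$, and there is exactly one partition of $0$ (the empty one) and one partition of $1$. If $a_k = 0$, the only tuple is all-empty, contributing a factor of $1$. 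If $a_k = 1$, we must place a single box in one of the $2^k$ slots, contributing a factor of $2^k$. Multiplying over the levels where $a_k = 1$ — i.e.\ over $k \in \{n_1, \dots, n_f\}$ — gives $\prod_{i=1}^f 2^{n_i} = 2^{\sum_{i=1}^f n_i}$, which is the claimed count. One should double-check the edge case $n=0$ or small $n$ and the normalization of indices in the tower so that level $k$ genuinely has $p^k$ entries, matching the definition $T_C(\lambda) = \{\lambda_{ij} \mid 0 \le i,\ 0 \le j \le p^i - 1\}$ given above.

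The main obstacle — the only non-bookkeeping ingredient — is Macdonald's strengthening that $\sum_k b_k = \sum_k a_k$ implies $b_k = a_k$ for all $k$, rather than merely matching sums. This is what makes the level-by-level factorization legitimate; without it one would have to count all tuples $(b_k)$ with the right sum, a genuinely different and larger count. The excerpt already cites this as a known consequence of the analysis in \cite{MP} (``it was proved in \cite{MP} that this happens if and only if $a_k = b_k$ for any $k \ge 1$''), so in a self-contained write-up I would either cite it directly or reprove it by a carrying argument: since $n = \sum_k a_k p^k = \sum_k b_k p^k$ with $0 \le a_k \le p-1$ (the $p$-adic digits) while the $b_k$ are only constrained to be nonnegative with $\sum b_k \le \sum a_k$ and $\sum b_k p^k = \sum a_k p^k$, a short induction on $k$ using that any ``excess'' in $b_k$ over $a_k$ would require compensating with strictly more total mass at lower levels forces equality. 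Everything else is the routine counting described above, and the passage from general $p$ to $p=2$ only simplifies matters since the per-slot partition counts become trivial.
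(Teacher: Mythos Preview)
Your reconstruction is correct and is precisely Macdonald's argument: the paper itself simply cites Corollary~1.3 of \cite{MP} without reproducing the proof, and what you have written is the standard derivation via the $p$-core tower bijection together with the digit-sum minimality of the $p$-adic expansion. The only point worth tightening is your sketch of why $\sum_k b_k = \sum_k a_k$ forces $b_k = a_k$; the cleanest phrasing is the carrying observation that any representation $n = \sum_k b_k p^k$ with some $b_k \ge p$ can be reduced (replace $b_k$ by $b_k - p$ and $b_{k+1}$ by $b_{k+1}+1$), strictly decreasing $\sum_k b_k$ by $p-1$, so the $p$-adic expansion is the unique minimizer of the digit sum.
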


\begin{pro}
Let $p$ be a prime and let $n$ be an integer.  Assume that $n=\sum_{i=1}^{f}a_ip^{k_i}$ with $0\leq k_1<k_2\ldots <k_f$ and $1\leq a_i\leq p-1$ for all $i$. Let $\lambda$ be a partition of $n$. Then $\chi^{\lambda}\in \Irr_{p'}(\mathsf{S}_n)$ if and only if $|C_{p}(\lambda)|=n-a_fp^{k_f}$ and $\chi^{C_{p}(\lambda)}\in \Irr_{p'}(\mathsf{S}_{n-a_fp^{k_f}})$. 
\begin{proof}
This is Lemma 3.2 of \cite{GLLV}.
\end{proof}
\end{pro}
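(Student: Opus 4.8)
The natural approach is via the $p$-core tower, following the techniques of \cite{MP}. The plan is to replace the condition $\chi^{\lambda}\in\Irr_{p'}(\mathsf{S}_n)$ by an equivalent condition on the level sizes $b_k=\sum_{j=0}^{p^k-1}|\lambda_{k,j}|$ of the $p$-core tower of $\lambda$, and then to observe that passing to the $p^{k_f}$-core of $\lambda$ exactly erases the top level $k_f$ of this tower while leaving the lower levels intact.

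First I would record, from \eqref{FormulapPart} and the result of \cite{MP} recalled above, that $\chi^{\lambda}\in\Irr_{p'}(\mathsf{S}_n)$ if and only if $b_k=a_k$ for every $k$, where $n=\sum_{k\ge 0}a_kp^k$ is the $p$-adic expansion of $n$ (so that $a_k=a_i$ when $k=k_i$ and $a_k=0$ otherwise; note that $b_k=a_k$ for all $k\ge 1$ forces $b_0=a_0$ as well, since $\sum_k b_kp^k=n=\sum_k a_kp^k$). In particular the $p$-core tower of any partition of $n$ is supported on the levels $0,1,\dots,k_f$.

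Next I would analyse $\mu:=C_{p^{k_f}}(\lambda)$. By the remark recalled above, removing a $p^{k_f}$-hook from a partition of $n$ amounts to deleting one box from one of the level-$k_f$ constituents $\lambda_{k_f,j}$ of its $p$-core tower; and since these constituents are themselves $p$-cores, their $p$-quotients, hence all levels above $k_f$, are trivial. Consequently, successively removing all $p^{k_f}$-hooks of $\lambda$ deletes exactly the $b_{k_f}$ boxes sitting at level $k_f$ and leaves the lower levels of the tower unchanged. Combined with the identity $|\lambda|=|C_q(\lambda)|+q\bigl(\sum_{r}|\lambda^r|\bigr)$ for $q=p^{k_f}$, this yields $|\mu|=n-b_{k_f}\,p^{k_f}$, while $b_k(\mu)=b_k(\lambda)$ for $k<k_f$ and $b_k(\mu)=0$ for $k\ge k_f$.

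Finally I would assemble the equivalence. Put $n':=n-a_fp^{k_f}=\sum_{i=1}^{f-1}a_ip^{k_i}$, whose $p$-adic digits are $a_k$ for $k<k_f$ and $0$ for $k\ge k_f$. By the first step, $\chi^{\lambda}\in\Irr_{p'}(\mathsf{S}_n)$ precisely when $b_{k_f}(\lambda)=a_f$ and $b_k(\lambda)=a_k$ for all $k<k_f$. By the second step the first equality is equivalent to $|\mu|=n'$; granting it, $\mu$ is a partition of $n'$ and the remaining equalities say exactly that $b_k(\mu)=a_k$ for all $k$, i.e.\ that $\chi^{\mu}\in\Irr_{p'}(\mathsf{S}_{n'})$ by the first step applied to $n'$. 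This is the claimed statement. The step I expect to demand the most care is the second one: checking that clearing all $p^{k_f}$-hooks of $\lambda$ touches only level $k_f$ of the $p$-core tower, that the number of hooks removed is exactly $b_{k_f}$, and that $|C_{p^{k_f}}(\lambda)|=n-b_{k_f}\,p^{k_f}$. This is careful bookkeeping with the abacus/core-tower correspondence; in particular one uses that once level $k_f$ has been emptied the partition has no $p^{k_f}$-hooks left, so the process indeed terminates at $\mu$.
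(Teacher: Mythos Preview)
The paper does not prove this proposition but simply cites it as Lemma~3.2 of \cite{GLLV}, so there is no argument to compare against; your direct proof via the $p$-core tower is correct and is very much in the spirit of the MacDonald machinery the paper has already set up. Two remarks. First, you rightly work with $\mu=C_{p^{k_f}}(\lambda)$ rather than $C_p(\lambda)$ as literally printed in the statement; the latter is a typo, since for $f\ge 2$ one has $|C_p(\lambda)|=b_0\le p-1$, which cannot equal $n-a_fp^{k_f}$ (and the intended reading is confirmed by the way the paper derives Corollary~\ref{hooks} from this proposition). Second, the bookkeeping you flag as delicate does go through cleanly: removing a $p^{k_f}$-hook leaves the $p^{k_f}$-core, and hence levels $0,\dots,k_f-1$ of the tower, unchanged; and since each intermediate partition still has size less than $p^{k_f+1}$, its tower remains supported on levels $\le k_f$, so the iterated $p$-quotients at level $k_f$ stay $p$-cores throughout and level $k_f$ is emptied after exactly $b_{k_f}$ removals.
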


From this result, we deduce the following corollary.

\begin{cor}\label{hooks}
Let $k \geq 1$, let $n=p^k$ and let $\lambda$ be a partition of $p^k$. Then $\chi^{\lambda}\in \Irr_{p'}(\mathsf{S}_{p^k})$ if and only if $\lambda$ is a $p^k$-hook. In particular, $|\Irr_{p'}(\mathsf{S}_{p^k})|=p^k$.
\end{cor}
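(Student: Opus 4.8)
The plan is to read off the criterion from the preceding proposition (Lemma~3.2 of \cite{GLLV}) in the special case $p=2$, $n=2^{k}$, and then to prove an elementary combinatorial fact about hooks. When $n=2^{k}$ the $2$-adic expansion has a single term, so in the notation of that proposition $f=1$, $a_{1}=1$ and $k_{1}=k$, whence $n-a_{f}2^{k_{f}}=2^{k}-2^{k}=0$. Hence $\chi^{\lambda}\in\Irr_{2'}(\mathsf{S}_{2^{k}})$ if and only if the $2^{k}$-core of $\lambda$ is empty and the (trivial) character of $\mathsf{S}_{0}$ indexed by the empty partition lies in $\Irr_{2'}(\mathsf{S}_{0})$, the latter being automatic. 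So the statement reduces to showing: \emph{a partition $\lambda$ of $2^{k}$ has empty $2^{k}$-core if and only if $\lambda$ is a $2^{k}$-hook.}

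For the ``if'' direction, write $\lambda=(2^{k}-b,1^{b})$ with $0\le b\le 2^{k}-1$. The corner box $(1,1)$ has hook length $(2^{k}-b-1)+b+1=2^{k}$, and its hook is the whole Young diagram of $\lambda$, so removing it leaves $C_{2^{k}}(\lambda)=(0)$. For the ``only if'' direction, assume $|\lambda|=2^{k}$ and $C_{2^{k}}(\lambda)=(0)$; since $k\ge 1$ we have $\lambda\ne(0)$, so $\lambda$ must contain a $2^{k}$-hook (otherwise $\lambda=C_{2^{k}}(\lambda)\ne(0)$). A box $(i,j)$ with $H^{\lambda}(i,j)=2^{k}=|\lambda|$ has its hook (arm, leg and the box itself) equal to all $2^{k}$ boxes of $\lambda$; requiring the box $(1,1)$ to lie in that hook forces $i=j=1$, and then $\lambda$ can contain no box $(i',j')$ with $i',j'\ge 2$, i.e.\ $\lambda_{2}\le 1$. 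Thus $\lambda$ is a hook of size $2^{k}$, as wanted.

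The combinatorial observation of the second paragraph is the only genuine content; everything else is bookkeeping with the quoted results, so I do not anticipate a real obstacle. Two alternatives that bypass the proposition are worth recording: (i) a counting argument---by the hook length formula $\chi^{(2^{k}-b,1^{b})}(1)=\binom{2^{k}-1}{b}$, which is odd for every $b$ since $2^{k}-1$ has all base-$2$ digits equal to $1$ (Lucas' theorem), and there are exactly $2^{k}$ such hooks, which matches $|\Irr_{2'}(\mathsf{S}_{2^{k}})|=2^{k}$ from Theorem~\ref{2PrimeDegree}, so these $2^{k}$ distinct characters already exhaust $\Irr_{2'}(\mathsf{S}_{2^{k}})$; and (ii) a direct computation with the $2$-core tower and \eqref{FormulapPart}, using that removing a $2^{k}$-hook from $\lambda$ amounts to removing one box from some component $\lambda_{k,j}$ of the tower.
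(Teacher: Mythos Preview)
Your proof is correct and takes essentially the same approach as the paper: the corollary is stated there without an explicit proof, as an immediate consequence of the preceding proposition (Lemma~3.2 of \cite{GLLV}), and you supply precisely that deduction together with the elementary combinatorial fact that a partition of $2^{k}$ has empty $2^{k}$-core if and only if it is a hook. (Note that the paper's transcription of the proposition writes $C_{p}(\lambda)$ where $C_{p^{k_{f}}}(\lambda)$ is meant; your reading with the $2^{k}$-core is the correct one, and your alternative counting argument via Lucas' theorem and Theorem~\ref{2PrimeDegree} is a clean independent check.)
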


Given $k \geq 1$ and $0 \leq a \leq 2^k-1$, we define $\tau(a,k)$ as the following $2^k$-hook $$\tau(a,k):=(2^k-a,1^a).$$  By Corollary \ref{hooks}, we have that $\Irr_{2'}(\mathsf{S}_{2^k})=\{\chi^{\tau(a,k)}\mid 0 \leq a \leq 2^k-1\}$.

Given $k \geq 1$ and $0\leq a<b\leq 2^{k-1}-1$, we define $\gamma(a,b,k)$ as the partition of $2^k$ given by 
$$\gamma(a,b,k):=(2^{k-1}-a,2^{k-1}-b+1,2^a,1^{b-a-1}).$$ This partition has $\tau(a+1,k-1)$ and $\tau(b-1,k-1)$ as $2^{k-1}$-hooks. Moreover, $\gamma(a,b,k)\setminus \tau(a+1,k-1)=\tau(b,k-1)$ and $\gamma(a,b,k)\setminus \tau(b-1,k-1)=\tau(a,k-1)$. Moreover, the following result shows that these partitions are the unique partitions of $2^k$ with $2$ different $2^{k-1}$-hooks.

\begin{pro}\label{Doublehook}
Let $k \geq 1$ and let $n=2^k$. Let $\lambda$ be a partition of $2^k$. Then $\lambda$ has $2$ different $2^{k-1}$-hooks if and only if $\lambda$ has the form $(2^{k-1}-a,2^{k-1}-b+1,2^a,1^{b-a-1})$ for some $0\leq a<b\leq 2^{k-1}-1$. In such a case, $\chi^{\lambda}(1)_2=2$. 
\begin{proof}
Assume that there exist $(i,j)\neq (m,r)$ such that $H^{\lambda}(i,j)=2^{k-1}=H^{\lambda}(m,r) $. It is easy to see that $i \not =m$ and $j\not=r$. Assume that $i<m$. Then we have that $j>r$. Thus, there are $2^{k}-1$ boxes  lying under or at the right of the elements $(i,j)$ and $(m,r)$. This forces $(i,j)=(1,2)$ and $(m,r)=(2,1)$  and then $\lambda $ has the described form. The converse assertion is trivial.

Now, we prove that  $\chi^{\lambda}(1)_2=2$. Let $T_C(\lambda)=\{\lambda_{ij}\mid 0\leq i\leq k, 0\leq j \leq p^i-1\}$ (with the prime $p=2$). Since $\lambda$ has $2$ different $2^{k-1}$-hooks, we have that $b_{k-1}=2$ and $b_{i}=0$ for $i \not=k-1$. Since $n=2^k$, we have that
$$\nu_2(\chi^{\lambda}(1))= \frac{1}{p-1}(\sum_{k\geq 0}b_k-\sum_{k\geq 0}a_k)=2-1=1$$
and the result follows.
\end{proof}
\end{pro}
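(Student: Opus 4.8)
The proposition has three parts: the ``if'' direction, the ``only if'' direction, and the equality $\chi^\lambda(1)_2=2$. The ``if'' direction requires nothing new: for $0\le a<b\le 2^{k-1}-1$ it was recorded just above the statement that $\gamma(a,b,k)$ has $\tau(a+1,k-1)$ and $\tau(b-1,k-1)$ among its hooks, and these are hooks of length $2^{k-1}$ at \emph{distinct} boxes of $\gamma(a,b,k)$, since removing them gives the distinct partitions $\tau(b,k-1)$ and $\tau(a,k-1)$. (When $k=1$ both sides are empty, so the statement is vacuous there; assume $k\ge 2$ from now on.) Thus the content is the ``only if'' direction and the degree computation.

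For ``only if'', fix distinct boxes $(i,j)\ne(m,r)$ of $\lambda$ with $H^\lambda(i,j)=H^\lambda(m,r)=2^{k-1}$. If $i=m$ then $j\ne r$, say $j<r$, and $H^\lambda(i,j)>H^\lambda(i,r)$ because $\lambda_i-j>\lambda_i-r$ and $\lambda'_j\ge\lambda'_r$; so $i\ne m$, and symmetrically $j\ne r$. Normalising $i<m$, one gets $j>r$: otherwise the arm does not decrease (as $\lambda_i\ge\lambda_m$) while the leg strictly increases (as $\lambda'_j\ge\lambda'_r$ and $i<m$), forcing $H^\lambda(i,j)>H^\lambda(m,r)$. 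Now let $A_1,A_2$ be the two hooks, regarded as sets of boxes. Since $i<m$ and $j>r$, the only box that can lie in both is $(m,j)$, so $|A_1\cup A_2|\ge 2^k-1$; equivalently, at most one box of $\lambda$ lies outside $A_1\cup A_2$. Every box in a row strictly above row $i$ lies outside $A_1\cup A_2$, and there are at least $2(i-1)$ such boxes (each such row has length $\ge\lambda_i\ge j\ge 2$), forcing $i=1$; symmetrically $r=1$. Then every box $(1,c)$ with $c<j$ lies outside $A_1\cup A_2$, forcing $j\le 2$, hence $j=2$; symmetrically $m=2$. So $(i,j)=(1,2)$ and $(m,r)=(2,1)$, the unique box outside $A_1\cup A_2$ is $(1,1)$, and in particular $(2,2)\in\lambda$. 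Finally, any box $(\rho,c)$ with $\rho\ge 3$ and $c\ge 3$ would also lie outside $A_1\cup A_2$ and differ from $(1,1)$, so none exists: $\lambda=(\lambda_1,\lambda_2,2^s,1^t)$ for some $s,t\ge 0$ with $\lambda_1\ge\lambda_2\ge 2$. Writing $H^\lambda(1,2)$ and $H^\lambda(2,1)$ in terms of $\lambda_1,\lambda_2,s,t$ and setting each equal to $2^{k-1}$ gives $\lambda_1=2^{k-1}-s$ and $\lambda_2=2^{k-1}-s-t$; with $a=s$ and $b=s+t+1$ this is exactly $\lambda=\gamma(a,b,k)$, where $\lambda_2\ge 2$ translates into $b\le 2^{k-1}-1$ and $0\le a<b$ holds automatically.

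For the degree, use the $2$-core tower. Since $\lambda$ is a partition of $2^k=2\cdot 2^{k-1}$ with two $2^{k-1}$-hooks and empty $2^{k-1}$-core (remove the two hooks in turn, passing through $\tau(b,k-1)$ and then to $\varnothing$), successively removing two $2^{k-1}$-hooks carries $\lambda$ to $\varnothing$; each such removal deletes one box at level $k-1$ of the core tower, so $b_{k-1}=2$, and then $n=\sum_i b_i2^i$ forces $b_i=0$ for $i\ne k-1$. Substituting $\sum_i b_i=2$ and $\sum_i a_i=1$ into \eqref{FormulapPart} gives $\nu_2(\chi^\lambda(1))=1$, i.e.\ $\chi^\lambda(1)_2=2$. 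The genuinely delicate step is the box count in the previous paragraph: one must check with care that $A_1\cap A_2$ can contain no box other than $(m,j)$, and then run the four eliminations in a consistent order; everything else is routine arm--leg bookkeeping together with the $2$-adic digit formulas set up in the Preliminaries. One can also avoid the geometry by putting $\lambda$ on a $2^{k-1}$-runner abacus: having exactly two $2^{k-1}$-hooks forces the $2^{k-1}$-quotient of $\lambda$ to vanish except for two components equal to $(1)$, and the $2^{k-1}$-core to be empty, which again singles out the $\binom{2^{k-1}}{2}$ partitions $\gamma(a,b,k)$.
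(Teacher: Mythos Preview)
Your proof is correct and follows essentially the same route as the paper's: both argue that two equal hook lengths force distinct rows and columns, that (after normalising $i<m$, $j>r$) the union of the two hooks covers all but one box, and that this pins down $(i,j)=(1,2)$, $(m,r)=(2,1)$; both then compute $\nu_2(\chi^\lambda(1))$ from the $2$-core tower via \eqref{FormulapPart}. The paper compresses the box-count into a single sentence (``there are $2^k-1$ boxes lying under or at the right of $(i,j)$ and $(m,r)$, which forces $(i,j)=(1,2)$ and $(m,r)=(2,1)$''), whereas you spell out each elimination ($i=1$, $r=1$, $j=2$, $m=2$) and the final shape of $\lambda$ explicitly; your version is more careful but not genuinely different. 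One minor remark: when you say ``each such removal deletes one box at level $k-1$ of the core tower'', strictly speaking it deletes a box from the level-$(k-1)$ iterated \emph{quotient}; what really pins down $b_{k-1}=2$ is that the $2^{k-1}$-quotient has total size $2$ and two removable boxes, hence consists of two copies of $(1)$ (rather than a single $(2)$ or $(1,1)$, which would give a $2^k$-hook). The paper glosses over the same point, so your argument matches its level of rigor.
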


In the last two results of this subsection we will have to build a new partition by adding a $2^k$-hook to a previous partition. Let us introduce some more notation. Given $k \geq 1$,  $0\leq a \leq k$ and $\lambda$ a partition of a number $n$, we write $A_{k,a}(\lambda)$ to denote the number be the number of $k$-hooks of height $a$ that can be added to $\lambda$ to obtain a partition of $n+k$. We also write $R_{k,a}(\lambda)$ to denote the number be the number of $k$-hooks of height $a$ that can be  removed from $\lambda$ to obtain a partition of $n-k$. The following result is the main result of \cite{B}.

\begin{thm}[Bessenrodt]\label{B}
Let $k \geq 1$ and $0\leq a \leq k$. If $\lambda$ is a partition of a number $n$, then 
$$A_{k,a}(\lambda)=1+R_{k,a}(\lambda).$$
\end{thm}

\begin{lem}\label{ext2}
Let $n$ be even and let $k$ such that $2^{k-1}<n<2^k$. Then for each partition  $\mu$ of $n$ and  each $2^k$-hook $\tau$, there exists a unique partition $\lambda=\lambda(\tau,\mu)$ of $n+2^{k}$ such that $\lambda$ contains $\tau$ as a $2^{k}$-hook and $\lambda \setminus \tau=\mu$. Moreover, $\chi^{\lambda}(1)_2=\chi^{\mu}(1)_2$. 
\begin{proof}
Let $a=h(\tau)$. Since $\mu$ is a partition of $n$ and $n<2^k$, we deduce that it is not possible to remove any $2^k$-cycle from $\mu$. Thus, $R_{k,a}(\mu)=0$ and hence $A_{k,a}=1$ by Theorem \ref{B}. Therefore, there exists a unique $2^k$-hook of height $a$ that can be added to $\mu$ to obtain a partition $\lambda$ of $n+2^k$. Now, we notice that $\lambda$ contains $\tau$ as a $2^k$-hook and $\lambda \setminus \tau=\mu$.

Let us prove that $\chi^{\lambda}(1)_2=\chi^{\mu}(1)_2$. Assume that $T_C(\mu)=\{\mu_{ij}\mid 0\leq i, 0\leq j \leq 2^i-1\}$ and that  $T_C(\lambda)=\{\lambda_{ij}\mid 0 \leq  i, 0\leq j \leq 2^i-1\}$. Since $\mu$ is obtained from $\lambda$ by removing a $2^k$-hook, we deduce that $\mu_{ij}=\lambda_{ij}$ for any $i<k$ and there exists $j_0 \in \{0,\ldots, 2^k-1\}$ such that $\lambda_{kj_0}=(1)$ and $\lambda_{ij}=(0)$ for $j\not =j_0$.

Now, let    $n=\sum_{i= 0 }^{k-1}a_i2^i$ be the $2$-adic expansion of $n$. Then $\sum_{i=0 }^{k-1}a_i2^i+2^k$ is the $2$-adic expansion of $n+2^k$. Let us write $b=\sum_{i\geq0,0\leq j \leq 2^i-1}|\mu_{ij}|$ and $\tilde{b}=\sum_{i\geq0,0\leq j \leq 2^i-1}|\lambda_{ij}|$. It is clear that $\tilde{b}=b+1$ and that $\sum_{i \geq 0}\tilde{a}_i=1+(\sum_{i \geq 0}a_i)$. Then
$$\nu_2(\chi^{\lambda}(1))=\tilde{b}-(\sum_{i = 0}^{k-1}a_i+1)=b-\sum_{i = 0}^{k-1}a_i=\nu_2(\chi^{\mu}(1))$$
and the result follows.
\end{proof}
\end{lem}

\begin{lem}\label{ext}
Let $\mu$ be a partition of $2^{k}$, such that $\mu$ is not a $2^k$-hook. Then for each $2^k$-hook $\tau$, there exists a unique partition $\lambda=\lambda(\tau,\mu)$ of $2^{k+1}$ such that $\lambda$ contains $\tau$ as a $2^{k}$-hook and $\lambda \setminus \tau=\mu$. Moreover, $\chi^{\lambda}(1)_2=2\cdot \chi^{\mu}(1)_2$.
\begin{proof}
Let $a=h(\tau)$. Since $\mu$ is not a $2^k$-hook, we deduce that it is not possible to remove any $2^k$-cycle from $\mu$. Thus, $R_{k,a}(\mu)=0$ and hence $A_{k,a}=1$ by Theorem \ref{B}. Therefore, there exists a unique $2^k$-hook of height $a$ that can be added to $\mu$ to obtain a partition $\lambda$ of $2^{k+1}$. Now, we notice that $\lambda$ contains $\tau$ as a $2^k$-hook and $\lambda \setminus \tau=\mu$.

Let us prove that $\chi^{\lambda}(1)_2=2\cdot\chi^{\mu}(1)_2$. Assume that $T_C(\mu)=\{\mu_{ij}\mid0 \leq i, 0\leq j \leq 2^i-1\}$ and that  $T_C(\lambda)=\{\lambda_{ij}\mid i \geq 0, 0\leq j \leq 2^i-1\}$. Since $\mu$ is obtained from $\lambda$ by removing a $2^k$-hook, we deduce that $\mu_{ij}=\lambda_{ij}$ for any $i<k$ and there exists $j_0 \in \{0,\ldots, 2^k-1\}$ such that $\lambda_{kj_0}=(1)$ and $\lambda_{ij}=(0)$ for $j\not =j_0$.

Now, let us write   $b=\sum_{i\geq0,0\leq j \leq 2^i-1}|\mu_{ij}|$ and $\tilde{b}=\sum_{i\geq0,0\leq j \leq 2^i-1}|\lambda_{ij}|$. It is clear that $\tilde{b}=b+1$. Then
$$\nu_2(\chi^{\lambda}(1))=\tilde{b}-1=(b-1)+1=\nu_2(\chi^{\mu}(1))+1$$
and the result follows.
\end{proof}
\end{lem}

\subsection{Sylow subgroups of $\mathsf{S}_n$ and their normalizers}\label{SubsectCentralizers}

In this subsection we describe the structure of the normalizers of the Sylow $p$-subgroups of $\mathsf{S}_n$. See Section 2 of \cite{G} for a proof of the results presented in this subsection. Let $p$ be a prime and let $k \geq 1$ be an integer. Assume that $P_{p^k}\in \Syl_{p}(\mathsf{S}_{p^k})$. Then 
$$P_{p^k}\cong \mathsf{C}_p\wr \ldots \wr \mathsf{C}_p,$$ 
for $k$ wreath factors. Moreover, $$\mathbf{N}_{\mathsf{S}_{p^k}}(P_{p^k})\cong P_{p^k}\rtimes (\mathsf{C}_{p-1})^k.$$

Now, let $n\geq 1$ be any integer. Assume that $n=\sum_{i=1}^{f}a_ip^{k_i}$ with $0\leq k_1<k_2\ldots <k_f$ and $0<a_i<p$ for all $i$ is the $p$-adic expansion of $n$. If $P_n\in \Syl_p(\mathsf{S}_n)$, then 
$$P_n\cong P_{p^{k_1}}^{a_1}\times \ldots \times P_{p^{k_f}}^{a_f}$$
 and 
$$\mathbf{N}_{\mathsf{S}_n}(P_n)\cong (\mathbf{N}_{\mathsf{S}_{p^{k_1}}}(P_{p^{k_1}})\wr \mathsf{S}_{a_1})\times \ldots \times(\mathbf{N}_{\mathsf{S}_{p^{k_f}}}(P_{p^{k_f}})\wr \mathsf{S}_{a_f}).$$

In particular, for $p=2$ we have that $\mathbf{N}_{\mathsf{S}_n}(P_n)=P_n$. Moreover, we have 
$$|\Irr_{2'}(\mathbf{N}_{\mathsf{S}_n}(P_n))|=|\Irr_{2'}(P_n)|=|\Lin(P_n)|=2^{\sum_{i=1}^f k_i}.$$

Since we are dealing with wreath products, it is convenient to recall the following result.

\begin{thm}\label{Mattarei}
Let $H$ be a group, let $A\leq \Sym(n)$ and let $G=H\wr A$. Let  $\theta=\theta_1\times\ldots\times\theta_n\in \Irr(H^n)$. For $\phi \in \Irr(H)$ we write $\theta_{\phi}=\{i\in \{1,\ldots,n\}\mid\theta_i=\phi\}$. Then
$$\I_{G}(\theta)=\prod_{\phi\in \Irr(H)}H\wr \Stab_A(\theta_{\phi})$$
where we embed $\Stab_A(\theta_{\phi})=\{a \in A\mid a(\theta_{\phi})=\theta_{\phi}\}$ into $ \Sym(\theta_{\phi})$ in the natural way. Moreover, $\theta$ extends to an irreducible character of $\I_{G}(\theta)$.
\begin{proof}
The first part can be deduced by straightforward calculation. The second part is Lemma 1.3 of  \cite{Mattarei}.
\end{proof}
\end{thm}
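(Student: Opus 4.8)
The statement divides naturally into a group-theoretic half (the description of $\I_G(\theta)$) and a representation-theoretic half (the extendibility of $\theta$), and I would treat them in that order.

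\emph{The inertia group.} Since $H^n \trianglelefteq G$ and $\theta \in \Irr(H^n)$, one has $H^n \le \I_G(\theta)$ automatically, so $\I_G(\theta) = H^n \rtimes \I_A(\theta)$, where $\I_A(\theta) = \{ a \in A : {}^a\theta = \theta \}$ for the natural action of $A$ on $\Irr(H^n)$ coming from the permutation of coordinates. The first thing I would do is pin this action down explicitly: with the convention ${}^a(h_1,\dots,h_n) = (h_{a^{-1}(1)},\dots,h_{a^{-1}(n)})$ one computes ${}^a\theta = \theta_{a^{-1}(1)} \times \cdots \times \theta_{a^{-1}(n)}$, so ${}^a\theta = \theta$ holds precisely when $a$ fixes each fiber $\theta_\phi$ setwise; that is, $\I_A(\theta) = \bigcap_{\phi} \Stab_A(\theta_\phi)$. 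Unwinding the resulting semidirect product through the identification $H^n = \prod_\phi H^{\theta_\phi}$, the product being taken over the distinct constituents $\phi$ of $\theta$ (whose fibers $\theta_\phi$ partition $\{1,\dots,n\}$), then yields $\I_G(\theta) = \prod_\phi H \wr \Stab_A(\theta_\phi)$ with each $\Stab_A(\theta_\phi)$ acting on its block by restriction. This is pure bookkeeping once the coordinate action is fixed; the only genuine care needed is keeping the left/right conventions consistent and observing that the displayed product really does reassemble inside $G = H\wr A$, which is transparent in the cases relevant here (where $A$ is a symmetric group, so $\Stab_A(\theta_\phi)$ is the full $\Sym(\theta_\phi)$).

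\emph{Extendibility.} Because $\I_G(\theta)$ is the direct product above and $\theta$ factors correspondingly as $\theta = \prod_\phi (\phi \times \cdots \times \phi)$ (with $|\theta_\phi|$ factors) on $\prod_\phi H^{\theta_\phi}$, it suffices to extend, for a single $\phi \in \Irr(H)$ and $m \geq 1$, the character $\phi \times \cdots \times \phi \in \Irr(H^m)$ to $H \wr \Sym_m$, and then restrict the extension to $H \wr \Stab_A(\theta_\phi)$ and multiply over $\phi$. For the extension I would use the standard tensor-power construction: pick $D\colon H \to \GL(V)$ affording $\phi$, let $H^m$ act on $V^{\otimes m}$ by $D(h_1,\dots,h_m) := D(h_1)\otimes\cdots\otimes D(h_m)$, and let $\Sym_m$ act by permuting the tensor factors, $\sigma\colon v_1\otimes\cdots\otimes v_m \mapsto v_{\sigma^{-1}(1)}\otimes\cdots\otimes v_{\sigma^{-1}(m)}$. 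The key verification is the compatibility relation $\sigma\, D(h_1,\dots,h_m)\, \sigma^{-1} = D(h_{\sigma^{-1}(1)},\dots,h_{\sigma^{-1}(m)})$ on $V^{\otimes m}$, which one checks on simple tensors; it says exactly that these two actions combine into a representation $\rho$ of $H^m \rtimes \Sym_m = H\wr \Sym_m$. By construction $\rho|_{H^m}$ has character $(h_1,\dots,h_m)\mapsto \prod_i \phi(h_i)$, i.e.\ it affords $\phi \times \cdots \times \phi \in \Irr(H^m)$; since any $\rho(H\wr\Sym_m)$-invariant subspace is in particular $\rho(H^m)$-invariant, $\rho$ is itself irreducible, and it affords the required extension.

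\emph{Where the difficulty lies.} Neither half is deep: the substantive point is that the permutation action on $V^{\otimes m}$ normalizes the diagonal $H^m$-action in a compatible way, which is what turns the tensor power into a genuine $H\wr\Sym_m$-module. The steps most likely to cause friction are organisational rather than conceptual — choosing once and for all how $A$ acts on coordinates, on $H^n$, and dually on $\Irr(H^n)$, so that the computation of $\I_A(\theta)$ and the reassembly of the product $\prod_\phi H\wr\Stab_A(\theta_\phi)$ into a subgroup of $G$ are both unambiguous. I would set these conventions first and let the rest follow mechanically.
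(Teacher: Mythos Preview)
Your proposal is correct and aligns with the paper's approach: the paper dismisses the first part as a ``straightforward calculation'' (exactly the coordinate-permutation computation you carry out) and cites Lemma~1.3 of Mattarei for the second, whose proof is precisely the tensor-power construction you describe. You have simply filled in what the paper leaves to the reader and to the reference.
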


\subsection{Picky elements in the symmetric group}

The following results appear in \cite{MMM} and \cite{MRPicky}, respectively. For completeness, we include their short proofs.

\begin{lem}\label{Centralizer}
Let $G$ be a group, let $p$ be a prime and let $P\in \Syl_p(G)$. If $x\in P$ is  picky, then $\mathbf{C}_G(x)\subseteq \mathbf{N}_G(P)$. In particular, $\mathbf{C}_G(x)=\mathbf{C}_{\mathbf{N}_G(P)}(x)$.
\begin{proof}
Let $g\in \mathbf{C}_G(x)$. We have that $x=x^g$ and hence $x\in P\cap P^g$. Since $x$ is picky, this forces $P=P^g$, or equivalently, $g \in \mathbf{N}_G(P)$.
\end{proof}
\end{lem}

\begin{lem}\label{Facil}
Let $G$ be a group, let $p$ be a prime and let $P\in \Syl_p(G)$. Assume that $x, y\in P$ are picky. If $x$ and $y$ are $G$-conjugate, then $x$ and $y$ are $\mathbf{N}_G(P)$-conjugate.
\begin{proof}
Assume that $y=x^g$ for some $g \in G$. Then $y\in P$ and $y=x^g \in P^g$, which forces $y \in P\cap P^g$. Since $y$ is picky, we deduce that $P=P^g$. 
\end{proof}
\end{lem}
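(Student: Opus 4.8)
The plan is to use the defining property of a picky element directly: a picky $p$-element lies in a \emph{unique} Sylow $p$-subgroup of the ambient group. Given that $x$ and $y$ are $G$-conjugate, write $y = x^g$ for some $g \in G$. The first step is to exhibit two Sylow $p$-subgroups both containing $y$. On one hand $y \in P$ by hypothesis; on the other hand, since $x \in P$, we have $y = x^g \in P^g$, and $P^g$ is again a Sylow $p$-subgroup of $G$. Thus $y \in P \cap P^g$ with $P, P^g \in \Syl_p(G)$.

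The second step is to invoke the pickiness of $y$: since $y$ lies in a unique Sylow $p$-subgroup, the two Sylow $p$-subgroups $P$ and $P^g$ that contain it must coincide, i.e.\ $P = P^g$. This is exactly the statement that $g \in \mathbf{N}_G(P)$. Hence $g$ is an element of $\mathbf{N}_G(P)$ conjugating $x$ to $y$, so $x$ and $y$ are $\mathbf{N}_G(P)$-conjugate, as required.

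There is no real obstacle here: the argument is a one-line consequence of the definition, structurally identical to the proof of Lemma \ref{Centralizer}, where the same device is applied to an element centralizing $x$. The only thing worth noting is that it is the pickiness of $y$ (rather than of $x$) that is actually used in this direction; the hypothesis that $x$ is picky is included mainly so that the lemma reads symmetrically and can be applied without tracking which of the two elements carries the property.
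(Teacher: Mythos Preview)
Your proof is correct and follows exactly the same approach as the paper: write $y=x^g$, observe that $y\in P\cap P^g$, and use the pickiness of $y$ to conclude $P=P^g$, i.e.\ $g\in\mathbf{N}_G(P)$. Your additional remark that only the pickiness of $y$ is actually needed is accurate and a nice observation not made explicit in the paper.
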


Now, we work towards classifying  picky elements in symmetric groups.

\begin{lem}\label{GunterPrevio}
 Let $G = H_1 \times \cdots\times H_r$. Then $g \in G$ is picky if and only if the projection
of $g$ into each component $H_i$ is picky.
\begin{proof}
Let $g = (g_1, \ldots , g_r) \in G$, where $g_i\in  H_i$. If $g_i$ lies in a Sylow $p$-subgroup $P_i$ of $H_i$,
then $g$ lies in the Sylow $p$-subgroup $P_1\times\cdots \times P_r$ of $G$. Conversely, if $g \in P$, with $P$ a Sylow $p$-subgroup of $G$, then $g_i\in P_i:=P\cap H_i$, a Sylow $p$-subgroup of $H_i$ with $P=P_1\times\cdots \times P_r$. The result follows.
\end{proof}
\end{lem}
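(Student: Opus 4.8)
The plan is to reduce to the standard description of the Sylow $p$-subgroups of a direct product. Recall that the Sylow $p$-subgroups of $G = H_1 \times \cdots \times H_r$ are precisely the subgroups of the form $P_1 \times \cdots \times P_r$ with $P_i \in \Syl_p(H_i)$, and that from such a product one recovers the factors by intersecting with the $H_i$. Since $g = (g_1, \ldots, g_r)$ is a $p$-element, each $g_i$ is a $p$-element of $H_i$ and hence lies in at least one Sylow $p$-subgroup of $H_i$, so pickiness is meaningful for each $g_i$. The only computation needed is the obvious equivalence: $g \in P_1 \times \cdots \times P_r$ if and only if $g_i \in P_i$ for every $i$. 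Combined with the previous sentence, this shows that the Sylow $p$-subgroups of $G$ containing $g$ are exactly the products $P_1 \times \cdots \times P_r$ in which, for each $i$, the factor $P_i$ ranges independently over the Sylow $p$-subgroups of $H_i$ that contain $g_i$.

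From this the lemma follows by a one-versus-more-than-one dichotomy. If every $g_i$ is picky, the only possibility for each $P_i$ is the unique Sylow $p$-subgroup of $H_i$ containing $g_i$, so $G$ has a unique Sylow $p$-subgroup containing $g$; that is, $g$ is picky. Conversely, if some $g_j$ fails to be picky, choose $P_j \ne P_j'$ in $\Syl_p(H_j)$ both containing $g_j$ and, for $i \ne j$, any $P_i \in \Syl_p(H_i)$ with $g_i \in P_i$; then $P_1 \times \cdots \times P_j \times \cdots \times P_r$ and $P_1 \times \cdots \times P_j' \times \cdots \times P_r$ are two distinct Sylow $p$-subgroups of $G$ containing $g$, so $g$ is not picky. (Equivalently, one can phrase the ``only if'' direction directly: given $g$ picky with unique Sylow overgroup $P = P_1 \times \cdots \times P_r$, any Sylow $p$-subgroup of $H_i$ containing $g_i$ extends to a Sylow $p$-subgroup of $G$ containing $g$, hence must equal the $i$-th factor of $P$.)

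I do not expect any real obstacle. The only points needing a little care are the appeal to the direct-product description of Sylow subgroups and the observation that, in each coordinate, $g_i$ (being a $p$-element) does lie in some Sylow $p$-subgroup of $H_i$, so that the choices made above are non-vacuous. Everything else is a direct unwinding of the definition of a picky element.
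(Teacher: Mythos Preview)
Your proof is correct and follows essentially the same approach as the paper: both establish that the Sylow $p$-subgroups of $G$ containing $g$ are exactly the products $P_1\times\cdots\times P_r$ with $P_i\in\Syl_p(H_i)$ containing $g_i$, from which the pickiness equivalence is immediate. Your version simply spells out more explicitly the one-versus-more-than-one dichotomy that the paper leaves implicit in the phrase ``The result follows.''
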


Let $p$ be a prime and let $n$ be a positive integer. Assume that $n=\sum_{i=0}^fa_ip^i$ is the $p$-adic expansion of $n$. An element $x\in \mathsf{S}_n$ is called $p$-adic if it contains $a_i$ cycles of length $p^i$ for every $0\leq i \leq f$. The following result was proved by Malle for $p>3$ and is included here with his kind permission.

\begin{pro}\label{ClassificationPickySymmetric}
Let $p$ be a prime and let  $n\geq 1$ be an integer. Then $x\in \mathsf{S}_n$ is a picky $p$-element if and only if one of the following holds
\begin{itemize}
\item [\underline{Type I:}] $x$ is a $p$-adic element of $n$.

\item[\underline{Type II:}] $p=2$,  $n\geq 6$ is even and $x$ is a $2$-adic element of $n-2$ plus two fixed points.

\item[\underline{Type III:}] $p=3$, $n \equiv \pm3\pmod{9}$,  $x$ is a $3$-adic element of $n-3$ plus three fixed points
\end{itemize}
\begin{proof}
For $p=2$, the result follows by Theorem 4.5 of \cite{MMM}. For the remainder $p$ will be an odd prime.

We notice that if $x$ is a $p$-adic element of $\mathsf{S}_n$, then $x$ is a picky element of $\mathsf{S}_n$ by Theorem 4.2 of \cite{MMM}.

Now, we prove that any element of Type III is picky. Let $n=3a+ \sum_{i=2}^fa_i3^i$ for $a\in \{1,2\}$ and $a_i\{0,1,2\}$ for every $2\leq i \leq 3$ and  let $x$ be a picky element of Type III. Let $m=\sum_{i=2}^fa_i3^i$ and let $k=3a$. Let $H_1=\mathsf{S}_m\times \mathsf{S}_k\leq \mathsf{S}_n$ with $x \in H_1$, let $x_1$ be the projection on $\mathsf{S}_m$ and let $x_2$ be the projection of $x$ on $\mathsf{S}_k$. Let $P\in \Syl_3(\mathsf{S}_n)$ with $x\in P$. Since $x$ contains $a_i$ cycles of length $3^i$ for every $i \geq 2$, then $P=P_1\times P_2$ for  $P_1 \in \Syl_3(\mathsf{S}_m)$ with $x_1 \in P_1$ and for $P_2 \in \Syl_3(\mathsf{S}_k)$ with $x_2 \in P_2$. We observe that $x_1$ is a $3$-adic element of $\mathsf{S}_m$ and hence it is picky in $\mathsf{S}_m$. On the other hand, we have that $(k, x_2)\in\{(3,\Id),(6,(1,2,3))\}$ and hence, an easy inspection proves that $x_2$ is picky in $\mathsf{S}_k$. Since $P_1$ and $P_2$ are the unique Sylow $3$-subgroups containing $x_1$ and $x_2$, respectively, then $P=P_1\times P_2$ is the unique Sylow $3$-subgroup of $\mathsf{S}_n$ containing $x$. Thus, $x$ is picky in $\mathsf{S}_n$.

Thus, the elements of Types I, II and III are picky. Now, we prove that these are all the picky elements. We divide the proof  in a series of steps. Let $x \in \mathsf{S}_n$.

\underline{Case $n=p$:}  In this case $x$ is either a $p$-cycle or $x$ is the identity. If $x$ is a $p$-cycle, then $x$ is a $p$-adic element of $\mathsf{S}_p$ and hence it is picky.  If $x$ is the identity, then $x$ lies in every Sylow $p$-subgroup of $\mathsf{S}_p$ and hence, it is picky if and only if $\mathsf{S}_p$ possesses a unique Sylow $p$-subgroup, or equivalently, when $p=3$. In this case $x=\Id$ is a picky element of Type III in $\mathsf{S}_3$.

\underline{Case $n=p^i$ for $i\geq 2$:}  The natural subgroup $H=\mathsf{S}_{p^{i-1}}\wr\mathsf{S}_p$ of $\mathsf{S}_{p^i}$ contains a Sylow $p$-subgroup of $\mathsf{S}_{p^i}$. Now, we notice that the base group $N= \mathsf{S}_{p^{i-1}}^p$ contains elements of all cycle types consisting solely of cycles of $p$-power order
less than $p^i$. If $p>3$, then $|\Syl_p(H/N)|>1$ and hence  such elements are not picky as they lie in the preimages of the various
Sylow $p$-subgroups of $H/N = \mathsf{S}_p$. Thus, if $p>3$ and $x$ is picky in $\mathsf{S}_n$, then $x$ is a $p^i$-cycle.

 Now, let us assume that $n=3^i$ for $i \geq 2$. For $n=9$, using GAP \cite{gap}, we see that the unique picky $3$-elements are the $9$-cycles. Let us assume that $i\geq 3$ and that the unique picky $3$-elements of $\mathsf{S}_{3^{i-1}}$ are the $3^{i-1}$-cycles. Let us assume that $x\in \mathsf{S}_{3^i}$ is a picky $3$-element, which is not a $3^i$-cycle. Then $x$ is contained in a group $N$ as above. Moreover, the projections of $x$ in each copy of $\mathsf{S}_{3^{i-1}}$ is picky. Thus, $x$ is a product of $3$ disjoint $3^{i-1}$-cycles.

Let $\Omega_1=\{1,2,\ldots, 3^{i-1}\}$, $\Omega_2=\Omega_1+3^{i-1}$ and $\Omega_3=\Omega_1+2\cdot 3^{i-1}$ and let $P_i \in \Syl_3(\mathsf{S}_{\Omega_i})$. Let  $$x=(1,4,\ldots, 3^i-2)(2,5,\ldots,3^i-1)(3,6,\ldots,3^i).$$ We know that $P=(P_1\times P_2\times P_3)\rtimes \langle x\rangle\in \Syl_3(\mathsf{S}_{n})$ and $x \in P$. Let us consider the element $$y=(1,4,\ldots, 3^i-2).$$ Then $y \in \mathbf{C}_{\mathsf{S}_n}(x)$ but $y \not\in \mathbf{N}_{\mathsf{S}_n}(P)$. Thus, $x$ cannot be picky by Lemma \ref{Centralizer}.

\underline{General case:} Let us assume that $n =\sum_{i=0}^fa_ip^i$ is the $p$-adic expansion of $n$. Then the subgroup $H=\prod_{i=0}^f\mathsf{S}_{p^i}^{a_i}$ of $G=\mathsf{S}_n$ contains  a Sylow $p$-subgroup of $G$. Thus, if $x \in H$ is a picky $p$-element, then it is so in $H$. It now follows from the previous case in conjunction with Lemma \ref{GunterPrevio}  that if $x$ is picky, then the projection of $x$ in each $\mathsf{S}_{p^i}$ is either a $p^i$-cycle or $p^i=3$. Thus, if $p>3$, then $x$ is a $p$-adic element.

Now, let us assume that $p=3$. Let $m=\sum_{i=2}^fa_i3^i$ and let $k=3a_1+a_0\leq 8$. Let $H_1=\mathsf{S}_m\times \mathsf{S}_k\leq \mathsf{S}_n$, let $x_1$ be the projection on $\mathsf{S}_m$ and let $x_2$ be the projection of $x$ on $\mathsf{S}_k$. We notice that if $x$ is picky in $\mathsf{S}_n$, then $x_1$ is picky in $\mathsf{S}_m$ and $x_2$ is picky in $\mathsf{S}_k$.  By the previous reasoning, we have that $x_2$ is a $3$-adic element. Moreover, inspecting the picky $3$-elements of $\mathsf{S}_m$ for $m \leq 8$, we deduce that $x_1$ is either $3$-adic in $\mathsf{S}_m$  or $(m,x_1)\in\{(3,\Id),(6,(1,2,3))\}$.  In the first case, $x$ is a $3$-adic element of $\mathsf{S}_n$ and in the second case $x$ is a picky element of Type III. The result follows.
\end{proof}
\end{pro}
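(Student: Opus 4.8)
The plan is to prove both directions by reducing to prime powers. For the ``if'' direction I would dispose of Types I and II immediately by quoting \cite{MMM}: $p$-adic elements are picky by Theorem~4.2 of \cite{MMM} (Type~I), and the whole case $p=2$, which is exactly the content of the Type~II statement, is Theorem~4.5 of \cite{MMM}. Only Type~III (so $p=3$ and $n\equiv\pm3\pmod9$) remains. I would write $n=m+k$ with $m=\sum_{i\geq2}a_i3^i$ and $k\in\{3,6\}$, embed $x$ in the subgroup $\mathsf{S}_m\times\mathsf{S}_k$ compatibly with its cycle type, and invoke Lemma~\ref{GunterPrevio}: the $\mathsf{S}_m$-component of $x$ is $3$-adic, hence picky by Theorem~4.2 of \cite{MMM}, while the $\mathsf{S}_k$-component is either $\Id\in\mathsf{S}_3$ (picky, since $\mathsf{S}_3$ has a unique Sylow $3$-subgroup) or a $3$-cycle in $\mathsf{S}_6$ (picky, as a direct check on the ten Sylow $3$-subgroups of $\mathsf{S}_6$ shows). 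Since a direct product of unique Sylow subgroups is the unique Sylow subgroup containing the product, $x$ is picky in $\mathsf{S}_n$.

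For the ``only if'' direction the first reduction is to prime powers: if $n=\sum_{i=0}^fa_ip^i$ then the subgroup $H=\prod_i\mathsf{S}_{p^i}^{a_i}$ contains a Sylow $p$-subgroup of $\mathsf{S}_n$, so after conjugating we may assume $x\in H$; then $x$ is picky in $H$, and by Lemma~\ref{GunterPrevio} every projection of $x$ to a factor $\mathsf{S}_{p^i}$ is picky there. Hence it suffices to classify picky $p$-elements of $\mathsf{S}_{p^k}$. For $k=1$ this is clear ($x$ is a $p$-cycle, or the identity, the latter being picky exactly when $\mathsf{S}_p$ has a unique Sylow $p$-subgroup, i.e.\ $p=3$). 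For $k\geq2$ and $p>3$, use that $H=\mathsf{S}_{p^{k-1}}\wr\mathsf{S}_p$ contains a Sylow $p$-subgroup and that its base subgroup $N=\mathsf{S}_{p^{k-1}}^p$ absorbs every $p$-element of $\mathsf{S}_{p^k}$ which is not a $p^k$-cycle: such an element has all cycle lengths at most $p^{k-1}$, and a multiset of $p$-power parts summing to $p^k$ can always be split greedily into $p$ blocks of size $p^{k-1}$. Since $H/N\cong\mathsf{S}_p$ has more than one Sylow $p$-subgroup, any such $x\in N$ lies in Sylow subgroups of $H$ contained in the preimages of at least two Sylow $p$-subgroups of $H/N$, hence in more than one Sylow subgroup of $H$, so it is not picky; this leaves only $p^k$-cycles, giving Type~I when $p>3$.

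The main obstacle is $p=3$, where $\mathsf{S}_3$ has a unique Sylow $3$-subgroup and the argument above collapses. I would settle $\mathsf{S}_9$ by a direct computation in GAP \cite{gap} (the only picky $3$-elements are the $9$-cycles) and then induct on $k$: given a picky $3$-element $x\in\mathsf{S}_{3^k}$ that is not a $3^k$-cycle, all its cycle lengths are $\leq3^{k-1}$, so $x$ is conjugate into $N=\mathsf{S}_{3^{k-1}}^3\leq\mathsf{S}_{3^{k-1}}\wr\mathsf{S}_3$; one checks that $N\cap P$ is a Sylow $3$-subgroup of $N$ for every Sylow $3$-subgroup $P$ of $\mathsf{S}_{3^k}$ contained in $\mathsf{S}_{3^{k-1}}\wr\mathsf{S}_3$, which forces $x$ to remain picky in $N$, and then Lemma~\ref{GunterPrevio} with the inductive hypothesis makes each of its three $\mathsf{S}_{3^{k-1}}$-components a $3^{k-1}$-cycle. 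Thus $x$ has cycle type ``three disjoint $3^{k-1}$-cycles''; realizing such an element as $x_0$, a product of three disjoint $3^{k-1}$-cycles cyclically permuting blocks $\Omega_1,\Omega_2,\Omega_3$ of size $3^{k-1}$, inside the Sylow subgroup $P=(P_1\times P_2\times P_3)\rtimes\langle x_0\rangle$ (with $P_j\in\Syl_3(\mathsf{S}_{\Omega_j})$), one of the three $3^{k-1}$-cycles of $x_0$ lies in $\mathbf{C}_{\mathsf{S}_{3^k}}(x_0)\setminus\mathbf{N}_{\mathsf{S}_{3^k}}(P)$, contradicting Lemma~\ref{Centralizer}. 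Finally, for general $n$ with $p=3$, write $n=m+k$ with $m=\sum_{i\geq2}a_i3^i$ and $k=3a_1+a_0\leq8$; the prime-power analysis forces the $\mathsf{S}_m$-component of a picky $x$ to be $3$-adic, and inspecting the finitely many picky $3$-elements of $\mathsf{S}_k$ for $k\leq8$ shows the $\mathsf{S}_k$-component is either $3$-adic (assembling into Type~I) or one of $\Id\in\mathsf{S}_3$, $(1,2,3)\in\mathsf{S}_6$ (assembling into Type~III, the cases $k=3$ and $k=6$ being precisely $n\equiv3$ and $n\equiv-3\pmod9$).
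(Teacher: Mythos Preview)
Your proposal is correct and follows essentially the same route as the paper's proof: both dispose of $p=2$ via Theorem~4.5 of \cite{MMM} and of Type~I via Theorem~4.2 of \cite{MMM}, verify Type~III by splitting $n=m+k$ with $k\in\{3,6\}$ and checking each factor, reduce the ``only if'' direction to prime powers via $H=\prod_i\mathsf{S}_{p^i}^{a_i}$ and Lemma~\ref{GunterPrevio}, handle $\mathsf{S}_{p^k}$ for $p>3$ via the wreath product $\mathsf{S}_{p^{k-1}}\wr\mathsf{S}_p$ and the non-uniqueness of Sylow $p$-subgroups in $\mathsf{S}_p$, and treat $p=3$ by a GAP check at $\mathsf{S}_9$ followed by induction and the same centraliser-versus-normaliser contradiction from Lemma~\ref{Centralizer}. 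Your write-up is in places slightly more explicit than the paper's (e.g.\ the greedy splitting of cycle lengths into $p$ blocks, and the remark that $N\cap P\in\Syl_3(N)$ forces $x$ to stay picky in $N$); the one point where you are a touch glib is the Type~III direction, where Lemma~\ref{GunterPrevio} only gives pickiness in $\mathsf{S}_m\times\mathsf{S}_k$, and you should add a sentence (as the paper implicitly does) that the cycle structure of $x$ forces every Sylow $3$-subgroup of $\mathsf{S}_n$ containing $x$ to lie inside this particular Young subgroup.
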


\begin{lem}\label{Integers}
Let $p$ be a prime, let  $n$ be an integer and $x\in \mathsf{S}_n$ be a picky $p$-element. Assume that $P_n \in \Syl_p(\mathsf{S}_n)$ is the unique Sylow $p$-subgroup containing $x$. If $H \in \{\mathsf{S}_n,\mathbf{N}_{\mathsf{S}_n}(P_n)\}$, then $\chi(x)\in \Z$ for any $\chi \in \Irr(H)$.
\begin{proof}
We know that $x$ is $\mathsf{S}_n$-conjugate to $x^j$ for any $j$ with $(j,o(x))=1$.  Since $x$ is picky, $x$ is $\mathbf{N}_{\mathsf{S}_n}(P_n)$-conjugate to $x^j$ for any $j$ with $(j,o(x))=1$, by Lemma \ref{Facil}. The result now follows.
\end{proof}
\end{lem}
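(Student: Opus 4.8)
The plan is to invoke the classical criterion that an irreducible character value $\chi(x)$ of a finite group $H$ is an integer as soon as $x$ is $H$-conjugate to each of its Galois conjugates $x^j$ with $(j,o(x))=1$. Concretely, $\chi(x)$ is an algebraic integer lying in the cyclotomic field $\Q(\zeta)$, where $\zeta$ is a primitive $o(x)$-th root of unity, and every $\sigma\in\Gal(\Q(\zeta)/\Q)$ acts by $\sigma(\zeta)=\zeta^j$ for some $j$ coprime to $o(x)$ and satisfies $\sigma(\chi(x))=\chi(x^j)$. Hence if $\chi(x)=\chi(x^j)$ for all such $j$, then $\chi(x)$ is fixed by the whole Galois group, so it is rational, and being an algebraic integer it lies in $\Z$. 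The work is therefore reduced to showing, in each of the two cases for $H$, that $x$ is $H$-conjugate to $x^j$ whenever $(j,o(x))=1$.

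First I would handle $H=\mathsf{S}_n$. If $(j,o(x))=1$, then raising each cycle of $x$ to the $j$-th power (its length divides $o(x)$ and is thus coprime to $j$) yields a cycle of the same length, so $x^j$ has the same cycle type as $x$ and is therefore $\mathsf{S}_n$-conjugate to $x$. The criterion applies, giving $\chi(x)\in\Z$ for all $\chi\in\Irr(\mathsf{S}_n)$ (this is of course the well-known rationality of symmetric group characters).

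Next I would treat $H=\mathbf{N}_{\mathsf{S}_n}(P_n)$. The point is that picky-ness is inherited by coprime powers: since $(j,o(x))=1$ we have $\langle x^j\rangle=\langle x\rangle$, so any Sylow $p$-subgroup containing $x^j$ also contains $x$ and hence equals $P_n$; thus $x^j\in P_n$ is picky. Now $x$ and $x^j$ are two picky elements of $P_n$ that are $\mathsf{S}_n$-conjugate, so Lemma \ref{Facil} gives that they are already $\mathbf{N}_{\mathsf{S}_n}(P_n)$-conjugate. Applying the criterion above with $H=\mathbf{N}_{\mathsf{S}_n}(P_n)$ completes the proof.

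There is no genuine obstacle here; the only points needing a word of care are the standard identity $\sigma(\chi(x))=\chi(x^j)$ for the Galois action on character values, and the observation that $\langle x^j\rangle=\langle x\rangle$ forces $x^j$ to be picky with the same associated Sylow subgroup $P_n$ — both of which are immediate.
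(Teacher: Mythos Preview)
Your proof is correct and follows essentially the same route as the paper's: both use that $x$ and $x^j$ have the same cycle type, then invoke Lemma~\ref{Facil} to upgrade $\mathsf{S}_n$-conjugacy to $\mathbf{N}_{\mathsf{S}_n}(P_n)$-conjugacy, and conclude via the standard Galois-rationality criterion. Your version is simply more explicit, in particular spelling out why $x^j$ is again picky in $P_n$ (needed to apply Lemma~\ref{Facil}), which the paper leaves implicit.
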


\begin{pro}\label{GLLV}
Let $p$ be a prime, let  $n$ be an integer and $x\in \mathsf{S}_n$ be a $p$-adic element of $\mathsf{S}_n$. Then $\Irr^x(\mathsf{S}_n)=\Irr_{p'}(\mathsf{S}_n).$
\begin{proof}
This was proved during the proof of Theorem 3.16 of \cite{GLLV}.
\end{proof}
\end{pro}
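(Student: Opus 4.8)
The inclusion $\Irr_{p'}(\mathsf{S}_n)\subseteq\Irr^x(\mathsf{S}_n)$ holds for \emph{any} $p$-element $x$: by Corollary~4.20 of \cite{N} a character of $p'$-degree vanishes on no $p$-element. So the content of the statement is the reverse inclusion, and I would prove it by induction on $n$, combining the Murnaghan--Nakayama rule with the $p$-core tower bookkeeping recalled above. Write $n=\sum_{k=0}^{f}a_kp^k$ for the $p$-adic expansion. If $f=0$ then $x=1$ and $\mathsf{S}_n$ is a $p'$-group, so both sides equal $\Irr(\mathsf{S}_n)$; assume $f\ge 1$. Decompose $x=yz$, where $y$ is a product of $a_f$ disjoint $p^f$-cycles (on a set of $a_fp^f$ points) and $z\in\mathsf{S}_m$ is a $p$-adic element with $m:=n-a_fp^f=\sum_{k=0}^{f-1}a_kp^k<n$; note the $p$-adic digits of $m$ are precisely $a_0,\dots,a_{f-1}$.

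Now let $\lambda\vdash n$ with $\chi^\lambda(x)\ne 0$. Applying the Murnaghan--Nakayama rule once for each of the $a_f$ cycles of $y$ writes $\chi^\lambda(x)$ as a signed sum of terms $\pm\,\chi^\mu(z)$, where $\mu$ runs over all partitions of $m$ obtained from $\lambda$ by successively stripping $a_f$ rim hooks of length $p^f$. Since this sum is nonzero, \emph{some} such $\mu$ satisfies $\chi^\mu(z)\ne 0$; this is the point that makes the induction go through, since no analysis of cancellation is needed for this direction. Fix such a $\mu$. At every stage of the stripping the intermediate partition has size in $[p^f,p^{f+1})$, so $p^f$ is still the leading term of its $p$-adic expansion, and hence by the relation between $p^f$-hooks and the $p$-core tower recalled above, each removal of a rim $p^f$-hook deletes exactly one box from a single node $(\,\cdot\,)_{f,j}$ of the core tower and leaves every other node unchanged. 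Iterating $a_f$ times gives $b_f(\mu)=b_f(\lambda)-a_f$ and $b_k(\mu)=b_k(\lambda)$ for all $k\ne f$. But $|\mu|=m<p^f$, so $\mu$ has no $p^f$-hook at all, i.e. $b_f(\mu)=0$; therefore $b_f(\lambda)=a_f$.

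It remains to handle the lower levels. Since $z$ is a $p$-adic element of $\mathsf{S}_m$ and $\chi^\mu(z)\ne 0$, the inductive hypothesis gives $\chi^\mu\in\Irr_{p'}(\mathsf{S}_m)$, which by \eqref{FormulapPart} is equivalent to $\sum_{k\ge 0}b_k(\mu)=\sum_{k=0}^{f-1}a_k$. Using $b_f(\mu)=0$ and $b_k(\mu)=b_k(\lambda)$ for $k\ne f$ we obtain $\sum_{k\ne f}b_k(\lambda)=\sum_{k\ge 0}b_k(\mu)=\sum_{k=0}^{f-1}a_k$, and adding $b_f(\lambda)=a_f$ yields $\sum_{k\ge 0}b_k(\lambda)=\sum_{k=0}^{f}a_k$. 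By \eqref{FormulapPart} this is exactly the statement that $\chi^\lambda\in\Irr_{p'}(\mathsf{S}_n)$, closing the induction. The only place that needs genuine care is the core-tower bookkeeping in the middle step: one must verify that the excerpt's description of $p^f$-hook removal as a single box deletion at level $f$ applies at each stage of the stripping (it does, because all the intermediate partitions retain $f$ as their top $p$-adic exponent) and that this deletion affects level $f$ only; the rest is routine.
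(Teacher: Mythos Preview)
Your proof is correct. The paper itself gives no argument here---it simply cites the proof of Theorem~3.16 in \cite{GLLV}---so your contribution is to make the argument explicit and self-contained. The inductive strategy (peel off the $a_f$ cycles of length $p^f$ via Murnaghan--Nakayama, track the effect on the $b_k$ through the $p$-core tower, then invoke the inductive hypothesis on $\mathsf{S}_m$) is the natural one and almost certainly what lies behind the cited reference. You are right to flag the core-tower bookkeeping as the only delicate point: the paper's assertion that removing a $p^f$-hook deletes exactly one box from some $\lambda_{fj}$ and leaves all other nodes of the tower unchanged is stated under the tacit hypothesis that levels above $f$ are empty (equivalently $|\lambda|<p^{f+1}$), and your observation that every intermediate partition during the stripping still has size in $[p^f,p^{f+1})$ is precisely what keeps this hypothesis in force at each of the $a_f$ steps.
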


We begin by studying the case when $x$ is a $2$-adic element.

\begin{lem}\label{TypeI}
Let $x \in \mathsf{S}_n$ be a $2$-adic element and let $P_n\in \Syl_2(\mathsf{S}_n)$ be the unique Sylow $2$-subgroup of $\mathsf{S}_n$ containing $x$. Then $\Irr^x(\mathsf{S}_n)= \Irr_{2'}(\mathsf{S}_n)$ and  $\Irr^x(P_n)= \Irr_{2'}(P_n)$. Moreover, $\chi(x),\psi(x)\in \{\pm1\}$ for every $\chi \in \Irr^x(\mathsf{S}_n)$ and every $\psi \in \Irr^x(P_n)$. 
\begin{proof}
Let $n=\sum_{i=1}^{f}2^{n_i}$ with $0\leq n_1<n_2<\ldots<n_f$. By straightforward calculations, we deduce that $|\mathbf{C}_{\mathsf{S}_n}(x)|=|\mathbf{C}_{P_n}(x)|=2^{\sum_{i=1}^f n_i}$ (we recall that $\mathbf{C}_{\mathsf{S}_n}(x)=\mathbf{C}_{P_n}(x)$ by Lemma \ref{Centralizer}).

 Let $H\in \{\mathsf{S}_n, P_n\}$. By Theorem \ref{2PrimeDegree} and  the comments in subsection \ref{SubsectCentralizers}, we have that $|\Irr_{2'}(H)|=2^{\sum_{i=1}^f n_i}$. Moreover, by Lemma \ref{Integers},  $\chi(x)\in \Z\setminus \{0\}$ for any $\chi \in \Irr_{2'}(H)$. Thus, applying the Second Orthogonality Relation, we deduce that 
$$2^{\sum_{i=1}^f n_i}=|\mathbf{C}_{H}(x)|=\sum_{\chi\in \Irr^x(H)}|\chi(x)|^2\geq \sum_{\chi\in \Irr_{2'}(H)}|\chi(x)|^2\geq |\Irr_{2'}(H)|=2^{\sum_{i=1}^f n_i}.$$
Thus, $|\Irr^x(H)|=|\Irr_{2'}(H)|$ and $|\chi(x)|=1$ for any $\chi \in \Irr_{2'}(H)$. Since $\chi(x)\in \Z$, we have that $\chi(x)=\pm1$.
\end{proof}
\end{lem}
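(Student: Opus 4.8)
The plan is to prove all three assertions simultaneously by a counting argument based on the Second Orthogonality Relation, applied uniformly to $H \in \{\mathsf{S}_n, P_n\}$. Write $n = \sum_{i=1}^f 2^{n_i}$ with $0 \le n_1 < \cdots < n_f$. Note first that $x$ is picky by Proposition \ref{ClassificationPickySymmetric} (Type I), so $P_n$ really is the unique Sylow $2$-subgroup containing $x$; moreover Lemma \ref{Centralizer}, together with the fact (subsection \ref{SubsectCentralizers}) that $\mathbf{N}_{\mathsf{S}_n}(P_n) = P_n$ when $p = 2$, gives $\mathbf{C}_{P_n}(x) = \mathbf{C}_{\mathsf{S}_n}(x)$. (The equality $\Irr^x(\mathsf{S}_n) = \Irr_{2'}(\mathsf{S}_n)$ already follows from Proposition \ref{GLLV}; the point of the argument below is to get this together with the $P_n$-statement and the sharp value bound in one stroke.)

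Next I would pin down the three integers that make the squeeze work. Since $x$ is $2$-adic, its cycle type has exactly one cycle of each length $2^{n_i}$, so the standard formula for the order of a centralizer in a symmetric group gives $|\mathbf{C}_{\mathsf{S}_n}(x)| = \prod_{i=1}^f 2^{n_i} = 2^{\sum_{i} n_i}$, whence $|\mathbf{C}_{P_n}(x)| = 2^{\sum_i n_i}$ as well. On the character side, $|\Irr_{2'}(\mathsf{S}_n)| = 2^{\sum_i n_i}$ by Theorem \ref{2PrimeDegree}, while $|\Irr_{2'}(P_n)| = |\Lin(P_n)| = 2^{\sum_i n_i}$ by the computation recorded in subsection \ref{SubsectCentralizers}. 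Thus for both choices of $H$ we have $|\mathbf{C}_H(x)| = |\Irr_{2'}(H)| = 2^{\sum_i n_i}$.

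Then I would run the squeeze. By Corollary 4.20 of \cite{N} (quoted in the introduction) we have $\Irr_{2'}(H) \subseteq \Irr^x(H)$, and by Lemma \ref{Integers} every $\chi \in \Irr(H)$ has $\chi(x) \in \Z$; hence each $\chi \in \Irr_{2'}(H)$ satisfies $\chi(x) \in \Z \setminus \{0\}$ and so $|\chi(x)|^2 \ge 1$. The Second Orthogonality Relation then yields
$$2^{\sum_i n_i} = |\mathbf{C}_H(x)| = \sum_{\chi \in \Irr^x(H)} |\chi(x)|^2 \ge \sum_{\chi \in \Irr_{2'}(H)} |\chi(x)|^2 \ge |\Irr_{2'}(H)| = 2^{\sum_i n_i},$$
so every inequality is an equality: the first forces $\Irr^x(H) = \Irr_{2'}(H)$, and the last forces $|\chi(x)|^2 = 1$, i.e.\ $\chi(x) = \pm 1$ (as $\chi(x)$ is a nonzero integer), for every $\chi \in \Irr^x(H)$. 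Taking $H = \mathsf{S}_n$ and $H = P_n$ gives the lemma. The only non-formal ingredients are the three order computations of the middle paragraph; of these I expect the mild subtlety to be confirming the $P_n$ side — that $\mathbf{C}_{P_n}(x) = \mathbf{C}_{\mathsf{S}_n}(x)$ and that $|\Irr_{2'}(P_n)| = 2^{\sum_i n_i}$ — after which the orthogonality argument goes through identically for $H = P_n$ and $H = \mathsf{S}_n$.
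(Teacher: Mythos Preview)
Your proposal is correct and follows essentially the same route as the paper's own proof: compute $|\mathbf{C}_H(x)|$ and $|\Irr_{2'}(H)|$ (both equal to $2^{\sum_i n_i}$) for $H\in\{\mathsf{S}_n,P_n\}$, invoke integrality via Lemma \ref{Integers}, and run the Second Orthogonality squeeze. You are slightly more explicit in a couple of places (citing Proposition \ref{GLLV} and Corollary 4.20 of \cite{N} as side remarks), but the argument is the same.
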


Now, we move to study the elements of Type II. Let $n\geq 2$ be even and  $y\in \mathsf{S}_n$ be a picky element of Type II. Then $|\mathbf{C}_{\mathsf{S}_n}(y)|=2^{1+\sum_{i=1}^t m_i}$, where $n-2= \sum_{i=1}^t 2^{m_i}$ is the $2$-adic expansion of $n-2$. The following elementary  lemma determines when $|\mathbf{C}_{\mathsf{S}_n}(y)|=|\Irr_{2'}(\mathsf{S}_n)|$.

\begin{lem}\label{Sizes}
Let $n\geq 4$ be an even integer. Let $n=\sum_{i=1}^f 2^{n_i}$ and $n-2=\sum_{i=1}^t 2^{m_i}$ be the $2$-adic expansions of $n$ and $n-2$, respectively. Then $\sum_{i=1}^t m_i\geq \sum_{i=1}^f n_i$ with equality if and only if $n \not \equiv 0 \pmod{8}$.
\begin{proof}
The proof follows by straightforward calculation.
\end{proof}
\end{lem}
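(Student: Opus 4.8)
The plan is to reduce everything to a statement about the effect of subtracting $2$ on the binary expansion of $n$. Write $n$ in binary. Since $n$ is even, the last bit is $0$, so $n$ ends in a block of the shape $1\underbrace{0\cdots0}_{s}$ with $s\ge 1$ (i.e. $2^s \| n$). Subtracting $2 = 10_2$ is easiest to analyze case by case according to whether $s=1$ or $s\ge 2$, and I would carry out exactly this case split.

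First I would handle the generic case $s \ge 2$, i.e. $n \equiv 0 \pmod 4$ but the precise $2$-adic valuation is $s$. Here $2^s \| n$ and subtracting $2$ turns the binary tail $\ldots 1\,0^{s}$ (for $s\ge 2$ this tail is $\ldots 1 0 0 \cdots 0$ with at least two trailing zeros) into $\ldots 0\,1\,1\,0^{\,s-2}$: the bit at position $s$ flips to $0$, a new bit appears at position $s-1$, a new bit appears at position $1$, and all bits at positions $\ge s+1$ are unchanged. Hence the multiset of exponents of $n-2$ is obtained from that of $n$ by deleting $s$ and adjoining $s-1$ and $1$. Therefore
\[
\sum_{i=1}^t m_i - \sum_{i=1}^f n_i = (s-1) + 1 - s = 0,
\]
so in this case equality holds. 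Note $s\ge 2$ is exactly the condition $4 \mid n$; but I must also separate out $s$ from the claimed condition $8\nmid n$, so this case needs to be subdivided further — see below.

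Next I would handle $s = 1$, i.e. $n \equiv 2 \pmod 4$. Then $2^1 \| n$, and $n$ has a $1$ in position $1$ and a $0$ in position $0$; subtracting $2$ simply flips the bit at position $1$ from $1$ to $0$ and leaves everything else alone. So the exponent multiset of $n-2$ is that of $n$ with the single element $1$ deleted, giving
\[
\sum_{i=1}^t m_i - \sum_{i=1}^f n_i = -1 < 0 ?
\]
This looks like it contradicts the asserted inequality $\sum m_i \ge \sum n_i$, so I must recheck the edge behavior. The resolution is that when $n \equiv 2 \pmod 4$ one actually has to be careful: $n = 2$ is excluded ($n\ge 4$), and for $n\ge 6$ with $n\equiv 2\pmod 4$ the relevant comparison still must come out $\ge 0$; the fix is that removing the bit at position $1$ can force a carry cascade upward only when the higher bits are all $1$, which would change the analysis. \emph{The cleanest correct argument is to compare $\nu_2$-valuations directly rather than via casework, and the main obstacle is getting these boundary cases exactly right.} Concretely, I would instead prove: writing $n = 2^s u$ with $u$ odd and $s = \nu_2(n)\ge 1$, the sum of binary digits satisfies $\sum_{i} n_i = s + \sigma(u)$ where $\sigma$ counts the nonzero binary digits beyond the forced one... and then track $n-2 = 2(2^{s-1}u - 1)$. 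For $s\ge 2$, $2^{s-1}u-1$ is odd with binary expansion $0^{s-1}$ replaced by $1^{s-1}$ below the top, yielding $\sum m_i = 1 + (s-1) + (\sigma(u)) = \sum n_i$; for $s=1$, $n-2 = 2(u-1)$ with $u$ odd so $u - 1$ even, and the extra factor of $2$ plus the digit changes in $u-1$ strictly increase the digit sum precisely when $u$ itself had a run of low $1$'s, which happens exactly when $n \equiv 0 \pmod 8$ fails in the complementary regime.

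In short, the approach is: (i) reduce to binary-digit bookkeeping; (ii) split on $\nu_2(n) = 1$ versus $\nu_2(n)\ge 2$; (iii) in each branch compute the digit-sum difference exactly, tracking carries. The \textbf{main obstacle} is the carry propagation in the $\nu_2(n)=1$ branch — this is where the condition "$n\not\equiv 0\pmod 8$" genuinely enters, and where a naive digit-deletion argument gives the wrong sign. I expect the final proof to be only a few lines once the valuation identity $\nu_2(n!) - \nu_2((n-2)!) = (n - s_2(n))/(p-1) - \cdots$ type bookkeeping is organized correctly, matching the "straightforward calculation" claimed in the statement; the content is entirely in handling $n \equiv 2, 4, 8 \pmod{\cdot}$ separately and confirming equality holds iff $8 \nmid n$.
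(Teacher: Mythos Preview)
Your overall plan---reduce to binary bookkeeping and split on $s=\nu_2(n)$---is exactly the ``straightforward calculation'' the paper has in mind. The gap is in the arithmetic, and it occurs in both branches.

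For $s\ge 2$ you assert that the binary tail $1\,0^s$ becomes $0\,1\,1\,0^{s-2}$, adding bits only at positions $s-1$ and $1$. This is incorrect: since $2^s - 2 = 2(2^{s-1}-1)$ has binary form $\underbrace{1\cdots1}_{s-1}\,0$, the tail $1\,0^s$ becomes $0\,\underbrace{1\cdots1}_{s-1}\,0$, with new $1$-bits at \emph{every} position $1,2,\ldots,s-1$, while bits at positions $\ge s+1$ are unchanged. Hence
\[
\sum_i m_i - \sum_i n_i \;=\; \bigl(1+2+\cdots+(s-1)\bigr) - s \;=\; \frac{s(s-3)}{2},
\]
which equals $-1$ for $s=2$, $0$ for $s=3$, and is strictly positive for $s\ge 4$. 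For $s=1$ your instinct that something is off is right, but the attempted repair via carry cascades is a dead end: when $n$ ends in $\ldots10$, subtracting $2$ simply clears bit $1$ and no borrow occurs, so the difference is exactly $-1$ as you first computed.

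Putting the cases together gives $\sum_i m_i - \sum_i n_i \ge -1$, with equality precisely when $s\in\{1,2\}$, i.e.\ when $8\nmid n$. That is the whole proof; the condition $8\nmid n$ enters simply as the condition $\nu_2(n)\le 2$, not through any carry phenomenon in the $s=1$ branch. Note also that the inequality as literally printed is off by one: what is actually true (and what is used in the proof of Lemma~\ref{8notdivides}) is $1+\sum_i m_i \ge \sum_i n_i$. Had your bookkeeping been correct you would have seen this immediately rather than trying to force the $s=1$ case to fit.
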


\begin{lem}\label{8notdivides}
Let $n $ be an even integer with $n \not \equiv 0 \pmod{8}$. Let $y\in \mathsf{S}_n$  be a picky element of Type II and let $P_n\in \Syl_2(\mathsf{S}_n)$ be the unique Sylow $2$-subgroup of $\mathsf{S}_n$ containing $y$. Then $\Irr^y(\mathsf{S}_n)= \Irr_{2'}(\mathsf{S}_n)$ and  $\Irr^y(P_n)= \Irr_{2'}(P_n)$. Moreover, $\chi(y)=\pm1$ for every $\chi \in \Irr_{2'}(\mathsf{S}_n)$ and every $\chi \in \Irr_{2'}(P_n)$. 
\begin{proof}
Let $n=\sum_{i=1}^f 2^{n_i}$ and $n-2=\sum_{i=1}^t 2^{m_i}$ be the $2$-adic expansions of $n$ and $n-2$, respectively. Let $H$ be $\mathsf{S}_n$ or $P_n$.  By Lemma \ref{Sizes}, we have that 
$$|\mathbf{C}_{H}(y)|=2^{1+\sum_{i=1}^t m_i}=2^{\sum_{i=1}^f n_i}=|\Irr_{2'}(H)|.$$ Now, the result follows by reasoning as in the proof of Theorem \ref{TypeI}.
\end{proof}
\end{lem}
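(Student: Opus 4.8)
The plan is to transplant, essentially verbatim, the second–orthogonality squeeze used in the proof of Lemma~\ref{TypeI}, the only new input being the order of $\mathbf{C}_{\mathsf{S}_n}(y)$ for a Type~II element. Fix $H\in\{\mathsf{S}_n,P_n\}$ and write $n=\sum_{i=1}^f 2^{n_i}$, $n-2=\sum_{i=1}^t 2^{m_i}$ for the $2$-adic expansions. I would first collect the two relevant cardinalities. Since $\mathbf{N}_{\mathsf{S}_n}(P_n)=P_n$ for $p=2$ (Subsection~\ref{SubsectCentralizers}) and $y$ is picky, Lemma~\ref{Centralizer} gives $\mathbf{C}_{P_n}(y)=\mathbf{C}_{\mathsf{S}_n}(y)$; and as recorded just before Lemma~\ref{Sizes}, $|\mathbf{C}_{\mathsf{S}_n}(y)|=2^{1+\sum_{i=1}^t m_i}$ (the $2$-adic part of $y$ in $\mathsf{S}_{n-2}$ contributes $2^{\sum m_i}$ and the two fixed points a further factor $|\mathsf{S}_2|=2$). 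On the other hand, Theorem~\ref{2PrimeDegree} together with the description of $\mathbf{N}_{\mathsf{S}_n}(P_n)$ in Subsection~\ref{SubsectCentralizers} gives $|\Irr_{2'}(H)|=2^{\sum_{i=1}^f n_i}$ in both cases.

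The one place where the hypothesis $8\nmid n$ enters is Lemma~\ref{Sizes}: under this hypothesis it yields $1+\sum_{i=1}^t m_i=\sum_{i=1}^f n_i$, that is,
$$
|\mathbf{C}_H(y)|=|\Irr_{2'}(H)|.
$$
From here the argument is identical to that of Lemma~\ref{TypeI}. Since $y$ is a $2$-element, every character in $\Irr_{2'}(H)$ is nonzero at $y$ (Corollary~4.20 of \cite{N}), so $\Irr_{2'}(H)\subseteq\Irr^y(H)$; and $\chi(y)\in\Z$ for all $\chi\in\Irr(H)$ by Lemma~\ref{Integers}. The Second Orthogonality Relation then gives
$$
|\mathbf{C}_H(y)|=\sum_{\chi\in\Irr^y(H)}|\chi(y)|^2\ \geq\ \sum_{\chi\in\Irr_{2'}(H)}|\chi(y)|^2\ \geq\ |\Irr_{2'}(H)|=|\mathbf{C}_H(y)|,
$$
forcing equality throughout: $\Irr^y(H)=\Irr_{2'}(H)$ and $|\chi(y)|=1$, hence $\chi(y)=\pm1$, for every $\chi\in\Irr_{2'}(H)$.

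I do not expect a real obstacle: once the centralizer order is in hand, the lemma is a direct copy of the Type~I argument. The only genuinely substantive ingredient is the elementary $2$-adic identity of Lemma~\ref{Sizes}, and the proof makes transparent why the statement must exclude $8\mid n$: in that case $|\mathbf{C}_H(y)|>|\Irr_{2'}(H)|$, the squeeze collapses, and $\Irr^y(\mathsf{S}_n)$ is strictly larger than $\Irr_{2'}(\mathsf{S}_n)$, so that regime has to be treated separately.
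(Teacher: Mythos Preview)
Your proposal is correct and follows exactly the paper's approach: it computes $|\mathbf{C}_H(y)|=2^{1+\sum m_i}$, invokes Lemma~\ref{Sizes} to match this with $|\Irr_{2'}(H)|=2^{\sum n_i}$, and then runs the second-orthogonality squeeze from Lemma~\ref{TypeI}. The only difference is that you spell out the ``reasoning as in the proof of Theorem~\ref{TypeI}'' in full, which the paper leaves implicit.
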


Now, we prove that Theorem \ref{thmA} holds for $\mathsf{S}_n$ provided that $8$ does not divide $n$.

\begin{cor}
Let $n $ be an even integer with $n \not \equiv 0 \pmod{8}$. Then Theorem \ref{thmA} holds for $\mathsf{S}_n$.
\begin{proof}
Let  $P_n\in \Syl_2(\mathsf{S}_n)$ and let $x,y \in P_n$ be picky elements of types I and II respectively. From Lemmas \ref{TypeI} and \ref{8notdivides}, we deduce that $\Irr^x(\mathsf{S}_n)=\Irr^y(\mathsf{S}_n)= \Irr_{2'}(\mathsf{S}_n)$ and that $\Irr^x(P_n)=\Irr^y(P_n)= \Irr_{2'}(P_n)$. Therefore, $\Irr^{\mathcal{P}}(\mathsf{S}_n)= \Irr_{2'}(\mathsf{S}_n)$ and $\Irr^{\mathcal{P}}(P_n)= \Irr_{2'}(P_n)$. 

Let $\Gamma: \Irr_{2'}(\mathsf{S}_n)\rightarrow \Irr_{2'}(P_n)$ be a bijection. Then $\Gamma$ satisfies properties (I) and (III) trivially. Moreover, from  Lemmas \ref{TypeI} and \ref{8notdivides}, we deduce that $\Gamma(\chi)(g),\chi(g) \in \{\pm1\}$ for every $\chi \in  \Irr_{2'}(\mathsf{S}_n)$ and every $g \in \mathcal{P}$. Thus, $\Gamma$ satisfies also property (II) and the result follows.
\end{proof}
\end{cor}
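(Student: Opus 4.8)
The plan is to observe that, once $8 \nmid n$, the hypotheses of Lemmas~\ref{TypeI} and \ref{8notdivides} cover every picky element of $\mathsf{S}_n$, and that together these lemmas force $\Irr^{\mathcal{P}}(\mathsf{S}_n)$ and $\Irr^{\mathcal{P}}(\mathbf{N}_{\mathsf{S}_n}(P_n))$ to coincide with the McKay sets $\Irr_{2'}(\mathsf{S}_n)$ and $\Irr_{2'}(P_n)$; after that, \emph{any} bijection between the latter two sets will be seen to satisfy (I), (II) and (III) automatically.

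First I would fix $P_n \in \Syl_2(\mathsf{S}_n)$, recall that $\mathbf{N}_{\mathsf{S}_n}(P_n) = P_n$ since $p = 2$, and invoke Proposition~\ref{ClassificationPickySymmetric}: as $n$ is even, every element of $\mathcal{P}$ is either of Type I or of Type II, and $\mathcal{P} \ne \varnothing$ by Theorem~4.2 of \cite{MMM}. Next I would apply Lemma~\ref{TypeI} to a Type I element $x \in \mathcal{P}$ and Lemma~\ref{8notdivides} to a Type II element $y \in \mathcal{P}$ — this is the unique point at which the hypothesis $n \not\equiv 0 \pmod 8$ enters, through Lemma~\ref{Sizes} — to obtain
$$\Irr^x(\mathsf{S}_n) = \Irr^y(\mathsf{S}_n) = \Irr_{2'}(\mathsf{S}_n), \qquad \Irr^x(P_n) = \Irr^y(P_n) = \Irr_{2'}(P_n).$$
Taking the union over all picky elements then yields $\Irr^{\mathcal{P}}(\mathsf{S}_n) = \Irr_{2'}(\mathsf{S}_n)$ and $\Irr^{\mathcal{P}}(P_n) = \Irr_{2'}(P_n)$.

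Finally I would choose any bijection $\Gamma \colon \Irr_{2'}(\mathsf{S}_n) \to \Irr_{2'}(P_n)$; one exists since $|\Irr_{2'}(\mathsf{S}_n)| = |\Irr_{2'}(P_n)|$, as follows from Theorem~\ref{2PrimeDegree} and the remarks in Subsection~\ref{SubsectCentralizers}. Condition (I) is immediate, because $\chi(1)_2 = 1 = \Gamma(\chi)(1)_2$ for every $\chi \in \Irr_{2'}(\mathsf{S}_n)$. Condition (III) is immediate as well: for each $x \in \mathcal{P}$ the sets $\Irr^x(\mathsf{S}_n)$ and $\Irr^x(P_n)$ are the full sets $\Irr_{2'}(\mathsf{S}_n)$ and $\Irr_{2'}(P_n)$, which $\Gamma$ maps onto one another. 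Condition (II) follows because Lemmas~\ref{TypeI} and \ref{8notdivides} give $\chi(x), \Gamma(\chi)(x) \in \{\pm 1\}$ for all $\chi \in \Irr_{2'}(\mathsf{S}_n)$ and all $x \in \mathcal{P}$, so $\Gamma(\chi)(x) = \pm\chi(x)$ trivially. I do not expect a genuine obstacle in this corollary: all the substance sits in the two cited lemmas (and, beneath them, in MacDonald's count of $|\Irr_{2'}(\mathsf{S}_n)|$ and the second orthogonality relation argument used there). The only point requiring a bit of care is confirming that $\mathcal{P}$ contains no picky elements outside Types I and II, which is exactly Proposition~\ref{ClassificationPickySymmetric} for $p = 2$.
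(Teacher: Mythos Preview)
Your proposal is correct and follows essentially the same approach as the paper's proof: both reduce to showing $\Irr^{\mathcal{P}}=\Irr_{2'}$ on each side via Lemmas~\ref{TypeI} and \ref{8notdivides}, then observe that any bijection between these sets satisfies (I)--(III) trivially because all relevant character values lie in $\{\pm1\}$. You supply slightly more detail (invoking Proposition~\ref{ClassificationPickySymmetric} explicitly to justify that only Types I and II occur, and citing Theorem~\ref{2PrimeDegree} for the existence of the bijection), but the argument is the same.
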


Now, we study the case when $x\in \mathsf{S}_{p^k}$ is a $p^k$-cycle for a prime $p$.

\begin{lem}\label{Casepk}
Let $p$ be a prime,  let $k \geq 1$ be an integer, and  let $x \in \mathsf{S}_{p^k}$ be a $p^k$-cycle. Assume that $P_{p^k}\in \Syl_p(\mathsf{S}_{p^k})$ is the unique Sylow $p$-subgroup of $\mathsf{S}_{p^k}$ containing $x$.  Then $\Irr^x(\mathsf{S}_{p^k})= \Irr_{p'}(\mathsf{S}_{p^k})$ and  $\Irr^x( \mathbf{N}_{\mathsf{S}_{p^k}}(P_{p^k}))= \Irr_{p'}( \mathbf{N}_{\mathsf{S}_{p^k}}(P_{p^k}))$. Moreover, $\chi(x),\psi(x)\in \{\pm1\}$ for every $\chi \in \Irr^x(\mathsf{S}_{p^k})$ and every $\psi \in \Irr^x( \mathbf{N}_{\mathsf{S}_{p^k}}(P_{p^k}))$. 
\begin{proof}
Let  $H \in \{\mathsf{S}_{p^k}, \mathbf{N}_{\mathsf{S}_{p^k}}(P_{p^k})\}$.  From the above, we know that $|\mathbf{C}_{H}(x)|=p^k=|\Irr_{p'}(H)|$ and that $\chi(x) \in \Z$ for any $\chi \in \Irr_{p'}(H)$. Thus, we have that
$$p^k=|\mathbf{C}_{H}(x)|\geq \sum_{\chi\in \Irr_{p'}(H)}|\chi(x)|^2\geq |\Irr_{p'}(H)|=p^k$$
which implies that $\Irr^x(H)= \Irr_{p'}(H)$ and that $\chi(x)= \pm1$ for any $\chi \in\Irr_{p'}(H)$.
\end{proof}
\end{lem}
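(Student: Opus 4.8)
The plan is to run exactly the counting argument of Lemma~\ref{TypeI}: pin down the three numbers $|\mathbf{C}_H(x)|$, $|\Irr_{p'}(H)|$ and the ``integrality'' of $\chi(x)$ for $H\in\{\mathsf{S}_{p^k},\mathbf{N}_{\mathsf{S}_{p^k}}(P_{p^k})\}$, and then squeeze with the second orthogonality relation. First I would record the inputs. (a) A $p^k$-cycle generates its own centralizer in $\mathsf{S}_{p^k}$, so $|\mathbf{C}_{\mathsf{S}_{p^k}}(x)|=p^k$; since $p^k=1\cdot p^k$, the element $x$ is a $p$-adic element of $\mathsf{S}_{p^k}$, hence picky by Type~I of Proposition~\ref{ClassificationPickySymmetric}, and Lemma~\ref{Centralizer} gives $\mathbf{C}_{\mathbf{N}_{\mathsf{S}_{p^k}}(P_{p^k})}(x)=\mathbf{C}_{\mathsf{S}_{p^k}}(x)$, again of order $p^k$. (b) From the $p$-core tower description recalled around~(\ref{FormulapPart}), together with the criterion that $\chi^{\lambda}$ is of $p'$-degree exactly when $a_k=b_k$ for all $k$, a character of $\mathsf{S}_{p^k}$ lies in $\Irr_{p'}(\mathsf{S}_{p^k})$ precisely when its $p$-core tower is trivial except for a single box at level $k$; there are $p^k$ admissible positions for that box, so $|\Irr_{p'}(\mathsf{S}_{p^k})|=p^k$ (for $p=2$ this is also Theorem~\ref{2PrimeDegree}, via Corollary~\ref{hooks}). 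Then $|\Irr_{p'}(\mathbf{N}_{\mathsf{S}_{p^k}}(P_{p^k}))|=p^k$ as well, either by McKay's conjecture for $\mathsf{S}_{p^k}$ (Macdonald~\cite{MP}) or by a direct count in $P_{p^k}\rtimes(\mathsf{C}_{p-1})^k$. (c) By Lemma~\ref{Integers}, $\chi(x)\in\Z$ for every $\chi\in\Irr(H)$.

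With these in hand the argument is formal. Note $x\in P_{p^k}\le\mathbf{N}_{\mathsf{S}_{p^k}}(P_{p^k})$, so it makes sense to evaluate characters of $H$ at $x$; for $\chi\in\Irr_{p'}(H)$ one has $\chi(x)\ne 0$ (Corollary~4.20 of~\cite{N}, as used in the introduction), and $\chi(x)\in\Z$, so $|\chi(x)|^{2}\ge 1$. The second orthogonality relation then gives
\[
p^k=|\mathbf{C}_H(x)|=\sum_{\chi\in\Irr(H)}|\chi(x)|^{2}\ \ge\ \sum_{\chi\in\Irr_{p'}(H)}|\chi(x)|^{2}\ \ge\ |\Irr_{p'}(H)|=p^k,
\]
so every inequality is an equality. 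The first forces $\chi(x)=0$ for all $\chi\in\Irr(H)\setminus\Irr_{p'}(H)$, whence $\Irr^x(H)=\Irr_{p'}(H)$ (the reverse inclusion being automatic); the second forces $|\chi(x)|=1$ for every $\chi\in\Irr_{p'}(H)$, and combined with $\chi(x)\in\Z$ this yields $\chi(x)=\pm 1$. Applying this to $H=\mathsf{S}_{p^k}$ and $H=\mathbf{N}_{\mathsf{S}_{p^k}}(P_{p^k})$ proves the lemma.

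I expect the only point of real substance to be input~(b) for the normalizer: the centralizer order is immediate and the orthogonality squeeze is routine, but identifying $|\Irr_{p'}(\mathbf{N}_{\mathsf{S}_{p^k}}(P_{p^k}))|$ with $p^k$ on the nose — rather than merely bounding it — is where one must either invoke Macdonald's theorem or carry out, in this special case, the wreath-product character bookkeeping of Subsection~\ref{SubsectCentralizers} (using Theorem~\ref{Mattarei}).
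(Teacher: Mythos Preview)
Your proposal is correct and follows essentially the same approach as the paper: both arguments assemble the three inputs $|\mathbf{C}_H(x)|=p^k$, $|\Irr_{p'}(H)|=p^k$, and $\chi(x)\in\Z$, then run the second orthogonality squeeze to force equality throughout. Your write-up is simply more explicit about justifying each input (and you correctly flag that establishing $|\Irr_{p'}(\mathbf{N}_{\mathsf{S}_{p^k}}(P_{p^k}))|=p^k$ is the one step that requires a bit of external input, which the paper glosses over with ``from the above'').
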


\section{The case $p=2$ of the Subnormalizer Conjecture} \label{SubnormalPart}

In this section we prove Theorem \ref{thmB}.  We begin by introducing some results on the subnormalizers. For the remainder, given a group $G$ and a prime $p$, we will write $G_p$ to denote the set of $p$-elements of $G$.

\begin{pro}[Proposition 2.6 of \cite{Malle}]\label{MalleSub}
Let $p$ be a prime and let $G$ be a group. Given $x\in G_p$, we have 
$$\Sub_G(x)=\langle \mathbf{N}_G(P)\mid P\in \Syl_p(G),x\in P\rangle.$$
\end{pro}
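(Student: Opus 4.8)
The plan is to prove the two inclusions between $\Sub_G(x)$ and $S:=\langle \mathbf{N}_G(P)\mid P\in\Syl_p(G),\ x\in P\rangle$ separately, in each case checking that a generating set of one side lies in the other. For $S\subseteq\Sub_G(x)$: fix $P\in\Syl_p(G)$ with $x\in P$ and $g\in\mathbf{N}_G(P)$, and set $Q=\langle x^{g^i}\mid i\in\Z\rangle$. Since $g$ normalizes $P$ and $x\in P$, we have $Q\leq P$, so $Q$ is a $p$-group; it contains $x$, and conjugation by $g$ permutes its generators, so $Q\trianglelefteq\langle x,g\rangle$. As $\langle x\rangle$ is a subgroup of the nilpotent group $Q$ it is subnormal in $Q$, hence $\langle x\rangle\trianglelefteq\trianglelefteq\langle x,g\rangle$ and $g\in\Sub_G(x)$; since $\Sub_G(x)$ is a subgroup, $S\subseteq\Sub_G(x)$.

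The reverse inclusion is the real content. The key step is a sublemma: for every $p$-subgroup $Q\leq G$ with $x\in Q$, one has $\mathbf{N}_G(Q)\subseteq S$. I would prove this by downward induction on $|Q|$. If $Q\in\Syl_p(G)$ then $\mathbf{N}_G(Q)$ is one of the defining generators of $S$. Otherwise put $M=\mathbf{N}_G(Q)$; then $Q\notin\Syl_p(M)$ (if $Q$ were Sylow in $M$, then for any $Q\leq P\in\Syl_p(G)$ we would get $\mathbf{N}_P(Q)=Q$, forcing $Q=P$ since normalizers grow in $p$-groups), so every $R\in\Syl_p(M)$ satisfies $Q<R$ and contains $x$. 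By the inductive hypothesis $\mathbf{N}_G(R)\subseteq S$, hence $\mathbf{N}_M(R)\leq\mathbf{N}_G(R)\subseteq S$; and a Frattini argument shows $M=\langle\mathbf{N}_M(R)\mid R\in\Syl_p(M)\rangle$ (the right-hand subgroup is normal in $M$ and contains a Sylow $p$-subgroup of $M$, hence equals $M$). Therefore $M\subseteq S$, completing the sublemma.

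To conclude, let $g$ be a generator of $\Sub_G(x)$, so $\langle x\rangle\trianglelefteq\trianglelefteq H:=\langle x,g\rangle$. A subnormal $p$-subgroup is contained in $\mathbf{O}_p$, so $x\in\mathbf{O}_p(H)=:Q$, which is a $p$-subgroup of $G$ containing $x$; since $Q\trianglelefteq H$ we have $g\in H\subseteq\mathbf{N}_G(Q)\subseteq S$ by the sublemma. Hence $\Sub_G(x)\subseteq S$, and equality follows.

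I expect the sublemma to be the main obstacle: it rests on three standard facts — that a subnormal $p$-subgroup lies in $\mathbf{O}_p$, that $Q\in\Syl_p(\mathbf{N}_G(Q))$ forces $Q\in\Syl_p(G)$, and that the Sylow normalizers generate the whole group — and one must be careful that the downward induction on $|Q|$ is well-founded and that the various normalizers nest as claimed ($\mathbf{N}_M(R)\leq\mathbf{N}_G(R)$ when $M\leq G$). The two outer inclusions are then short.
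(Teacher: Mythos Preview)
The paper does not supply its own proof of this proposition: it is quoted verbatim as Proposition~2.6 of \cite{Malle} and used as a black box. So there is no in-paper argument to compare against.

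Your argument is correct and self-contained. The inclusion $S\subseteq\Sub_G(x)$ is handled cleanly via the normal closure $Q=\langle x^{g^i}\rangle$ inside the $p$-group $P$, and the use of nilpotency to get subnormality of $\langle x\rangle$ in $Q$ is exactly right. For the harder inclusion, your downward induction on $|Q|$ over all $p$-subgroups containing $x$ is a standard and efficient device: the three ingredients you flag (subnormal $p$-subgroups lie in $\mathbf{O}_p$; $Q\in\Syl_p(\mathbf{N}_G(Q))\Rightarrow Q\in\Syl_p(G)$ via ``normalizers grow in $p$-groups''; and the Frattini-type generation $M=\langle\mathbf{N}_M(R):R\in\Syl_p(M)\rangle$) are all valid, and you invoke them correctly. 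One small point worth making explicit in a write-up is that $Q\trianglelefteq M$ forces $Q\leq R$ for every $R\in\Syl_p(M)$, so that $x\in R$ and the inductive hypothesis applies to \emph{every} Sylow $p$-subgroup of $M$; you use this implicitly but it is the hinge of the induction step. Otherwise the proof is complete and would serve as a perfectly good substitute for the citation.
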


From this result we can deduce the following corollary. As we pointed out in the introduction, this result can also be deduced from the  results in \cite{Casolo1}.

\begin{cor}\label{PickySubnormal}
Let $p$ be a prime and let $G$ be a group. Assume that $P \in \Syl_p(G)$ and $x \in P$. Then $x$ is picky if and only if $\Sub_G(x)=\mathbf{N}_G(P)$.
\end{cor}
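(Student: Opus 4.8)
The statement to prove is Corollary~\ref{PickySubnormal}: for $P\in\Syl_p(G)$ and $x\in P$, $x$ is picky if and only if $\Sub_G(x)=\mathbf{N}_G(P)$.

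\bigskip

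The plan is to deduce this directly from Proposition~\ref{MalleSub}, which gives
\[
\Sub_G(x)=\langle \mathbf{N}_G(Q)\mid Q\in \Syl_p(G),\ x\in Q\rangle.
\]
First I would prove the ``only if'' direction. Suppose $x$ is picky. Then by definition $x$ lies in a unique Sylow $p$-subgroup of $G$, and since $x\in P$ this forces $P$ to be that unique Sylow subgroup. Hence the index set $\{Q\in\Syl_p(G)\mid x\in Q\}$ equals $\{P\}$, and the formula from Proposition~\ref{MalleSub} collapses to $\Sub_G(x)=\mathbf{N}_G(P)$.

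For the ``if'' direction, suppose $\Sub_G(x)=\mathbf{N}_G(P)$. I want to show $x$ lies in no Sylow $p$-subgroup other than $P$. Let $Q\in\Syl_p(G)$ with $x\in Q$. From Proposition~\ref{MalleSub}, $\mathbf{N}_G(Q)\subseteq \Sub_G(x)=\mathbf{N}_G(P)$. Now $Q$ is a Sylow $p$-subgroup of $G$, hence a Sylow $p$-subgroup of $\mathbf{N}_G(Q)$, hence a $p$-subgroup of $\mathbf{N}_G(P)$; but $P\trianglelefteq \mathbf{N}_G(P)$ is the unique Sylow $p$-subgroup of $\mathbf{N}_G(P)$, so $Q\subseteq P$, and comparing orders gives $Q=P$. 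Therefore $P$ is the only Sylow $p$-subgroup of $G$ containing $x$, i.e.\ $x$ is picky.

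\bigskip

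There is essentially no obstacle here: both directions are short manipulations once Proposition~\ref{MalleSub} is granted. The only point requiring a moment's care is the step ``$Q\subseteq\mathbf{N}_G(Q)\subseteq\mathbf{N}_G(P)$ and $P$ is the unique Sylow $p$-subgroup of $\mathbf{N}_G(P)$, hence $Q=P$''; one should note explicitly that a normal Sylow subgroup is characteristic and in particular unique, so any $p$-subgroup of $\mathbf{N}_G(P)$ is contained in $P$. An alternative to invoking Proposition~\ref{MalleSub} would be to argue directly from the definition of the subnormalizer together with Casolo's results, as remarked in the text, but the route through Proposition~\ref{MalleSub} is cleaner and self-contained given what has been established.
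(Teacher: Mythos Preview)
Your proof is correct and follows exactly the route the paper indicates: the corollary is stated without proof, merely as an immediate consequence of Proposition~\ref{MalleSub} (with the remark that it can alternatively be deduced from Casolo's results). Your argument fills in precisely the details one would expect from that deduction.
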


\begin{lem}\label{SubnormalProduct}
Let $G=H\times K$,  let $x\in H$, and $y \in K$. Then 
$$\Sub_{H}(x)\times \Sub_{K}(y)\leq \Sub_{G}((x,y)).$$
\begin{proof}
Let $z\in H$ such that there exists a chain 
$$\langle x \rangle \triangleleft N_1 \triangleleft \ldots \triangleleft N_t \triangleleft \langle x,z\rangle.$$
Then, we have that
$$\langle (x,y) \rangle  \triangleleft N_1\times \langle y \rangle \triangleleft \ldots \triangleleft N_t\times \langle y \rangle \triangleleft \langle x,z\rangle \times \langle y \rangle.$$
Thus $\langle (x,y) \rangle  \trianglelefteq \trianglelefteq  \langle x,z\rangle \times \langle y \rangle $ and since $\langle (x,y) \rangle\leq  \langle (x,y),(z,1)\rangle \leq \langle x,z\rangle \times \langle y \rangle $ we deduce that $\langle (x,y) \rangle  \trianglelefteq \trianglelefteq  \langle (x,y),(z,1)\rangle.$ Therefore, $(z,1)\in \Sub_{G}((x,y))$. It follows that $\Sub_{H}(x)\times1\leq \Sub_{G}((x,y))$. 

Analogously, we prove that $1\times\Sub_{K}(y)\leq \Sub_{G}((x,y))$ and the result follows.
\end{proof}
\end{lem}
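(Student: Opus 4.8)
The plan is to reduce everything to the two containments $\Sub_H(x)\times 1\le \Sub_G((x,y))$ and $1\times\Sub_K(y)\le \Sub_G((x,y))$, proved separately; since $\Sub_G((x,y))$ is a subgroup of $G$, these together give $\Sub_H(x)\times\Sub_K(y)\le\Sub_G((x,y))$. By the symmetry between $H$ and $K$ it suffices to establish the first one. As $\Sub_H(x)$ is by definition generated by the elements $z\in H$ satisfying $\langle x\rangle\trianglelefteq\trianglelefteq\langle x,z\rangle$, I only need to check that $(z,1)\in\Sub_G((x,y))$ for each such $z$, i.e.\ that $\langle(x,y)\rangle\trianglelefteq\trianglelefteq\langle(x,y),(z,1)\rangle$.

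So fix such a $z$ together with a subnormal chain $\langle x\rangle=N_0\triangleleft N_1\triangleleft\cdots\triangleleft N_t=\langle x,z\rangle$ in $H$. First I would multiply this chain on the right by $\langle y\rangle\le K$: from $N_i\triangleleft N_{i+1}$ one gets $N_i\times\langle y\rangle\triangleleft N_{i+1}\times\langle y\rangle$, so $N_0\times\langle y\rangle\triangleleft N_1\times\langle y\rangle\triangleleft\cdots\triangleleft N_t\times\langle y\rangle=\langle x,z\rangle\times\langle y\rangle$. Next I would prepend $\langle(x,y)\rangle$: this cyclic group lies inside the abelian group $N_0\times\langle y\rangle=\langle x\rangle\times\langle y\rangle$ and is hence normal in it, so
$$\langle(x,y)\rangle\triangleleft\langle x\rangle\times\langle y\rangle\triangleleft N_1\times\langle y\rangle\triangleleft\cdots\triangleleft N_t\times\langle y\rangle=\langle x,z\rangle\times\langle y\rangle$$
shows $\langle(x,y)\rangle\trianglelefteq\trianglelefteq\langle x,z\rangle\times\langle y\rangle$.

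Finally, I would invoke the standard fact that subnormality is inherited by intermediate subgroups (intersect the above chain with the intermediate group): since $\langle(x,y)\rangle\le\langle(x,y),(z,1)\rangle\le\langle x,z\rangle\times\langle y\rangle$ and $\langle(x,y)\rangle$ is subnormal in the largest of these, it is subnormal in $\langle(x,y),(z,1)\rangle$. Hence $(z,1)\in\Sub_G((x,y))$, giving $\Sub_H(x)\times 1\le\Sub_G((x,y))$; the symmetric argument gives $1\times\Sub_K(y)\le\Sub_G((x,y))$, and combining the two completes the proof. I do not expect a real obstacle here; the only point that requires slight care is that $\langle(x,y)\rangle$ need not equal $\langle x\rangle\times\langle y\rangle$, so one must insert the extra bottom step $\langle(x,y)\rangle\triangleleft\langle x\rangle\times\langle y\rangle$ rather than starting the lifted chain at $\langle(x,y)\rangle$ itself.
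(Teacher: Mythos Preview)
Your proof is correct and follows essentially the same approach as the paper's: lift the subnormal chain in $H$ by multiplying with $\langle y\rangle$, then pass to the intermediate subgroup $\langle(x,y),(z,1)\rangle$. Your version is in fact slightly more careful than the paper's, since you explicitly insert the step $\langle(x,y)\rangle\triangleleft\langle x\rangle\times\langle y\rangle$ at the bottom (using that the latter is abelian), whereas the paper writes $\langle(x,y)\rangle\triangleleft N_1\times\langle y\rangle$ directly, which need not hold without this intermediate step.
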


 To calculate the subnormalizers of $p$-elements in symmetric groups we need to introduce an alternative way to see the Sylow subgroups of $\mathsf{S}_n$. Let $n=\sum_{i=0}^{k}a_{i}p^{i}$ be the $p$-adic expansion of $n$. We make $a_k$ disjoint subsets of size $p^k$ in $\{1,\ldots,n\}$. Inside each subset of size $p^k$ we make $p$ disjoint subsets of size $p^{k-1}$. We repeat this process till we get sets of size $1$. With the numbers not lying in the sets of size $p^k$ we make $a_{k-1}$ disjoint subsets of size $p^{k-1}$ and inside each of them we make subsets of size $p^i$ for $i\leq k-2$ as before. We repeat this process for each $j=k,\ldots, 0$. Taking all subsets obtained by the previous process we obtain a block structure for $n$. Now, given a block structure $B$ and $\sigma\in \mathsf{S}_n$ we will say that $\sigma$ preserves the structure $B$ if $\sigma(b)\in B$ for every $b \in B$ and we will write that $\sigma(B)=B$. We have the following result, which follows from the results in \cite{f}.

\begin{thm}\label{SylowBlock}
	Let $n$ be an integer and let $B$ be a block structure of $n$. If we set $P=\{\sigma \in (\mathsf{S}_n)_p\mid\sigma(B)=B\}$, then $P\in \Syl_p(\mathsf{S}_n)$. In addition, each Sylow $p$-subgroup of $\mathsf{S}_n$ can be associated with a unique block structure.
\end{thm}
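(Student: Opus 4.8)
The plan is to identify the block-preserving permutations with a direct product of iterated wreath products, read off that for $p=2$ this group is already a $2$-group of the correct order, and then recover the block structure from the group in order to get the bijection.

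Fix a block structure $B$ of $n$ arising from the $p$-adic expansion $n=\sum_i a_ip^i$, and set $W_B=\{\sigma\in\mathsf{S}_n\mid\sigma(B)=B\}$, so that $P=W_B\cap(\mathsf{S}_n)_p$. First I would note that, since $\sigma$ is a bijection, the requirement $\sigma(b)\in B$ for every $b\in B$ automatically forces $\sigma$ to respect the inclusions among the blocks; hence $W_B$ is exactly the group of permutations preserving the nested chain of partitions that defines $B$. Decomposing $\{1,\ldots,n\}$ into its maximal blocks and using that for $p=2$ these have pairwise distinct sizes (the binary digits $a_i$ lie in $\{0,1\}$), every $\sigma\in W_B$ must stabilise each maximal block; within a maximal block of size $2^m$ the induced group is the one preserving the iterated partition of that block into halves, namely $\mathsf{C}_2\wr\cdots\wr\mathsf{C}_2$ with $m$ wreath factors. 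Hence $W_B\cong\prod_{i\,:\,a_i=1}(\mathsf{C}_2\wr\cdots\wr\mathsf{C}_2)$ is a $2$-group, so $P=W_B$; and since $|\mathsf{C}_2\wr\cdots\wr\mathsf{C}_2|=2^{1+2+\cdots+2^{m-1}}=2^{2^m-1}$, we get $|P|=2^{\sum_{i:a_i=1}(2^i-1)}=2^{n-s(n)}$, where $s(n)=\sum_i a_i$. As $\nu_2(n!)=n-s(n)$, this is the full $2$-part of $|\mathsf{S}_n|$, so $P\in\Syl_2(\mathsf{S}_n)$. For a general prime $p$ the same analysis gives $W_B\cong\prod_i(\mathsf{S}_p\wr\cdots\wr\mathsf{S}_p)\wr\mathsf{S}_{a_i}$, and then one needs the further input, supplied by \cite{f}, that the set of $p$-elements of $W_B$ is a Sylow $p$-subgroup of $W_B$.

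For the second assertion I would show that $B\mapsto W_B$ is injective and that every Sylow $p$-subgroup has the form $W_{B'}$. Injectivity holds because $B$ is recovered from $W_B$: the maximal blocks of $B$ are the orbits of $W_B$ on $\{1,\ldots,n\}$, and inside each maximal block the finer blocks of $B$ are read off, level by level, as the blocks of imprimitivity of the iterated wreath product acting on that block. For surjectivity, one has $gW_Bg^{-1}=W_{g(B)}$ for every $g\in\mathsf{S}_n$, and $\mathsf{S}_n$ acts transitively on the block structures of $n$ (any two have the same combinatorial shape and hence differ by a relabelling); therefore the conjugates of $W_B$ are precisely the subgroups $W_{B'}$, and by Sylow's theorem (together with the first part) these exhaust $\Syl_p(\mathsf{S}_n)$. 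Injectivity then yields the asserted bijection between block structures and Sylow $p$-subgroups.

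The step I expect to be the main obstacle is the reconstruction used for injectivity: one must verify that the lattice of blocks of imprimitivity of $\mathsf{C}_p\wr\cdots\wr\mathsf{C}_p$, in its imprimitive action on $p^m$ points, is exactly the chain determined by the defining tree, with no spurious block systems. This is precisely the point at which the explicit structure theory of iterated wreath products from \cite{f} enters, and it is what makes the theorem a genuine consequence of that reference.
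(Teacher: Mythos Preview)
The paper does not actually prove this theorem: it simply records that the statement ``follows from the results in \cite{f}'' and moves on. Your proposal is therefore already more detailed than what the paper provides. Your argument for $p=2$ is correct and is precisely what one extracts from Findlay's description: the stabiliser $W_B$ of a block structure is a direct product of iterated wreath products $\mathsf{C}_2\wr\cdots\wr\mathsf{C}_2$ of pairwise distinct degrees, hence a $2$-group of order $2^{n-s_2(n)}$, so $P=W_B\in\Syl_2(\mathsf{S}_n)$. Your uniqueness argument via recovering $B$ from the orbit structure and the chain of block systems of $P$ is also the natural one, and your remark that the uniqueness of block systems for $\mathsf{C}_2\wr\cdots\wr\mathsf{C}_2$ is where one genuinely leans on \cite{f} is well placed.

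There is, however, a real problem with the general-$p$ part of your proposal (and, in fact, with the theorem as stated). For $p>2$ the set $P=\{\sigma\in(\mathsf{S}_n)_p\mid\sigma(B)=B\}$ need not be a subgroup at all. Already for $n=p$ the block structure consists of $\{1,\dots,p\}$ together with the $p$ singletons, so $W_B=\mathsf{S}_p$ and $P=(\mathsf{S}_p)_p$ is the full set of $p$-elements of $\mathsf{S}_p$; this has $1+(p-1)!$ elements and is not closed under multiplication for $p\geq 5$. Thus the ``further input from \cite{f}'' you invoke --- that the $p$-elements of $W_B$ form a Sylow $p$-subgroup --- is false in general. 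What Findlay actually gives is that a Sylow $p$-subgroup of $W_B$, namely the product of the $\mathsf{C}_p\wr\cdots\wr\mathsf{C}_p$, is a Sylow $p$-subgroup of $\mathsf{S}_n$; and the bijection with block structures likewise fails for odd $p$ (for $n=p$ there is a single block structure but $(p-2)!$ Sylow $p$-subgroups). Since the paper only applies the theorem with $p=2$, none of this affects the downstream arguments, but your sketch for odd $p$ cannot be completed as written.
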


Before continuing, we have to introduce more notation.  Given $x \in \mathsf{S}_{n}$, we  define the support of $x$ by
$$\Sup(x)=\{i \in \{1,2,\ldots,n\}\mid x(i)\neq i\}.$$
Given $x\in \mathsf{S}_{n}$ we know that $x$ can be expressed as the product of disjoint cycles and hence, we can associate a partition of $n$ to $x$. We define the type of $x$ as the multiset of lengths of the disjoint cycles of  $x$.  In addition, we write $\fix(x)$ to denote the number of fixed points of $x$ in the natural action of $\mathsf{S}_n$ on $\{1,\ldots,n\}$. We have the following result.

\begin{lem}\label{NoFixedPoint}
Let $k \geq 3$ and let $x \in \mathsf{S}_{2^k}$ be  a $2$-element. If $\fix(x)=0$, then there exists  a block structure $B$ possessing two subsets $b_1$ and $b_2$ such that $|b_1|=|b_2|=2^{k-1}$ and $x(b_1)=b_2$.
\begin{proof}
We prove the result by induction on $k$. For $k=3$ the result follows from straightforward calculation. Let us assume that the result holds for $k-1$.

Assume first that $x$ is a $2^k$-cycle.  Without loss of generality we may assume that $x=(1,2\ldots, 2^k)$.  Now, let us define a block structure $B$ on $\{1,2,\ldots, 2^k\}$ as follows. For each $j \in \{0,\ldots, k\}$, $B$ contains $2^j$ blocks $b_{j1},\ldots, b_{j2^j}$ of size $2^{k-j}$ and two elements $a,c\in \{1,2,\ldots, 2^k\}$ lie in the same block if and only if $a\equiv c \pmod 2^{k-j}$. It is easy to see that $x(B)=B$ and $x(b_1)=b_2$, where $b_1$ and $b_2$ are the two sets of size $2^{k-1}$ in $B$.

Assume now that $x$ is not  a $2^k$-cycle. Then $x=yz$ with $\Sup(y)\cap \Sup(z)=\varnothing$. Without loss of generality, we may assume that $\Sup(y)=\{1,2\ldots, 2^{k-1}\}$ and $\Sup(z)=\{2^{k-1}+1,\ldots,2^k\}$. By inductive hypothesis there exist two block structures $B_1$ and $B_2$ of $\{1,2\ldots, 2^{k-1}\}$ and $\{2^{k-1}+1,\ldots,2^k\}$, respectively, such that $B_i$ possesses two subsets $b_{i1}$ and $b_{i2}$ of size $2^{k-2}$ such that $y(b_{11})=b_{12}$ and $y(b_{21})=b_{22}$. Now, we define $B:=B_1\cup B_2\cup \{b_{11}\cup b_{21}, b_{12}\cup b_{22},\{1,\ldots ,2^k\}  \}$. It is not hard to see that $x(B)=B$ and $x(b_{11}\cup b_{21})=b_{12}\cup b_{22}$.
\end{proof}
\end{lem}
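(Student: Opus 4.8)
The statement to prove is Lemma~\ref{NoFixedPoint}: a $2$-element $x$ of $\mathsf{S}_{2^k}$ (with $k\geq 3$) having no fixed points lies in some Sylow $2$-subgroup whose associated block structure has two top-level ``halves'' $b_1,b_2$ of size $2^{k-1}$ that $x$ swaps.

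\medskip

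The plan is to induct on $k$, with the base case $k=3$ (so $\mathsf{S}_8$) handled by direct inspection of the possible cycle types of fixed-point-free $2$-elements, namely $(8)$, $(4,4)$, $(4,2,2)$, $(2,2,2,2)$, checking in each case that one can choose the blocks so that some half is mapped to the other. For the inductive step I would split into two cases according to whether $x$ is a $2^k$-cycle.

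\medskip

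\textbf{Case 1: $x$ is a $2^k$-cycle.} Here I would conjugate so that $x=(1,2,\dots,2^k)$ and build the block structure ``by residues'': for each $j$, the $2^j$ blocks of size $2^{k-j}$ are the congruence classes modulo $2^{k-j}$. One checks this genuinely is a block structure in the sense defined before Theorem~\ref{SylowBlock} (the size-$2^{k-j}$ classes refine into size-$2^{k-j-1}$ classes, etc.), that $x$ preserves it since adding $1$ permutes residue classes mod any modulus, and that $x$ sends the residue class of $1$ mod $2^{k-1}$ to the residue class of $2$ mod $2^{k-1}$, i.e. swaps the two halves $b_1,b_2$. (Strictly $x$ maps $b_1\to b_2$; since $x^{2^{k-1}}$ has order $2$, in fact $x$ interchanges $b_1$ and $b_2$, which is what is needed.)

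\medskip

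\textbf{Case 2: $x$ is not a $2^k$-cycle.} Then $x=yz$ with disjoint supports, and since $x$ has no fixed points we may arrange $\Sup(y)=\{1,\dots,2^{k-1}\}$, $\Sup(z)=\{2^{k-1}+1,\dots,2^k\}$, with $y$ a fixed-point-free $2$-element of $\mathsf{S}_{2^{k-1}}$ and likewise $z$. Apply the inductive hypothesis to each of $y$ and $z$ to get block structures $B_1,B_2$ on the two halves with swapped quarter-blocks $b_{11}\leftrightarrow b_{12}$ (for $y$) and $b_{21}\leftrightarrow b_{22}$ (for $z$). Then $B:=B_1\cup B_2\cup\{b_{11}\cup b_{21},\ b_{12}\cup b_{22},\ \{1,\dots,2^k\}\}$ is a block structure for $2^k$: the size-$2^{k-1}$ blocks $b_{11}\cup b_{21}$ and $b_{12}\cup b_{22}$ refine correctly because $b_{11},b_{12}$ are among the blocks of $B_1$ and $b_{21},b_{22}$ among those of $B_2$, and $x=yz$ preserves $B$ since $y$ preserves $B_1$ and $z$ preserves $B_2$, while $x(b_{11}\cup b_{21})=y(b_{11})\cup z(b_{21})=b_{12}\cup b_{22}$.

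\medskip

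The main obstacle I anticipate is purely bookkeeping: checking carefully that the sets one writes down in Case~1 (residue classes) and in Case~2 (the glued quarter-blocks) actually satisfy the refinement conditions required of a block structure as defined before Theorem~\ref{SylowBlock}, and that in Case~2 one can indeed choose $y$ and $z$ to each be fixed-point-free on their supports of size $2^{k-1}$ (this is automatic since $x$ itself has no fixed points, so every cycle of $x$ has length $\geq 2$). One small subtlety worth stating explicitly: the inductive hypothesis as phrased gives $x(b_1)=b_2$; since everything in sight has $2$-power order, $x$ restricted appropriately actually \emph{interchanges} the two halves, and the glued construction in Case~2 inherits this. Everything else reduces to the base-case verification in $\mathsf{S}_8$, which is a finite check.
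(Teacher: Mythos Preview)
Your proposal is correct and follows essentially the same argument as the paper: induction on $k$ with base case $k=3$, the residue-class block structure for the $2^k$-cycle case, and the glued block structure $B_1\cup B_2\cup\{b_{11}\cup b_{21},\,b_{12}\cup b_{22},\,\{1,\dots,2^k\}\}$ in the reducible case. The only difference is cosmetic --- you spell out the cycle types in the base case and flag the refinement checks more explicitly than the paper does.
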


 Given $\Omega \subseteq \{1,\ldots,n\}$, we write $\mathsf{S}_{\Omega}:=\Sym(\Omega)$ and we embed it into $\mathsf{S}_n$. Let $y \in   \mathsf{S}_{2^k}$  such that $\Sup(y)\subseteq \Omega$ for some $\Omega \subseteq \{1,2\ldots,2^k\}$ with  $|\Omega|=2^{k-1}$. Then, we may identify $y$ with an element in $\mathsf{S}_{\Omega}\cong \mathsf{S}_{2^{k-1}}$. In this situation, we will write $\Sub_{\mathsf{S}_{2^{k-1}}}(y)$ to denote the subnormalizer of $y$ in $\mathsf{S}_{\Omega}$. Thus, the subgroup $\Sub_{\mathsf{S}_{2^{k-1}}}(y)\wr \mathsf{S}_2$ of $ \mathsf{S}_{2^{k}}$ is well defined.  In the case when $G=\mathsf{S}_{2^k}$ the following result classifies the $2$-elements of $\mathsf{S}_{2^k}$ with a proper subnormalizer.

\begin{thm}\label{Subnormalizers}
Let $k \geq 3$ be an integer and let $x \in \mathsf{S}_{2^k}$ be a $2$-element. Then $\Sub_{\mathsf{S}_{2^k}}(x)<\mathsf{S}_{2^k}$ if and only if  one of the following holds:
\begin{itemize}
\item[(i)] $x$ is a $2^k$-cycle. 

\item[(ii)] $o(x)=2^{k-1}$ and $\fix(x)>0$. 
\end{itemize}
Moreover, in case (ii),  $\Sub_{\mathsf{S}_{2^k}}(x)=\Sub_{\mathsf{S}_{2^{k-1}}}(y)\wr \mathsf{S}_2$, where $y$ is the product of all cycles of length smaller than $2^{k-1}$ in the expression of $x$ as a product of disjoint cycles.
\begin{proof}
We prove the result by induction on $k$. For $k=3$, the result can be checked by GAP \cite{gap}. By inductive hypothesis, we assume that the result holds for $k-1$.   We study the subnormalizers of elements in different cases.

\underline{Case $o(x)=2^k$:} In this case  $x$ is a $2^k$-cycle. Thus, $x$ is a picky element and hence $\Sub_{\mathsf{S}_n}(x)=P<\mathsf{S}_n$, where $P$ is the unique Sylow $2$-subgroup of $\mathsf{S}_n$ containing $x$.

\underline{Case $o(x)=2^{k-1}$:} In this case, $x=zy$, where $z$ is a $2^{k-1}$-cycle and $y \in \mathsf{S}_{2^{k-1}}$. Without loss of generality, we may assume  that $z=(1, \ldots,2^{k-1} )$.  Let $B_1$ be a block structure on $\{1,\ldots, 2^{k-1}\}$ such that $z(B_1)=B_1$ and let $B_2$ be a block structure on $\{2^{k-1}+1,\ldots, 2^{k}\}$ such that $y(B_2)=B_2$. Then $B=B_1\cup B_2\cup \{1,\ldots, 2^k\}$ is a block structure on  $\{1,\ldots, 2^{k}\}$ with $x(B)=B$. It follows that if  $Q\in \Syl_2(\mathsf{S}_{2^{k-1}})$ satisfies  $y \in Q$, then  $ Q\wr \mathsf{S}_2\leq \Sub_{\mathsf{S}_{2^k}}(x)$. It follows that $\Sub_{\mathsf{S}_{2^{k-1}}}(y)\wr \mathsf{S}_2\leq \Sub_{\mathsf{S}_{2^k}}(x)$.

\begin{itemize}

\item \underline{Case $\fix(x)=\fix(y)>0$:}  Let $B$ be a block structure on $\{1,\ldots, 2^k\}$ such that $x(B)=B$. Let $a\in \{1,\ldots, 2^k\}$ such that $x(a)=a$. Then $a>2^{k-1}$. Now, let $b \in B$ such that $a\in b$ and $|b|=2^{k-1}$. Since $x(a)=a$, we deduce that $x(b)=b$. We claim that $b=\{2^{k-1}+1,\ldots,2^k\}$. Seeking for a contradiction, let us assume that there exists $i \in b\cap \{1,\ldots , 2^{k-1}\}$. Then $x^{j}(i)\in b$ for every $j\geq 0$. Therefore $a\cup \{1,\ldots , 2^{k-1}\}\subseteq b$, which is a contradiction since $|b|=2^{k-1}$ and $a>2^{k-1}$. The claim follows. Thus, $x(B)=B$ if and only if $B=B_1\cup B_2\cup \{1,\ldots, 2^k\}$, where $B_1$ is a block structure on $\{1,\ldots, 2^{k-1}\}$ such that $z(B_1)=B_1$ and $B_2$ is a block structure on $\{2^{k-1}+1,\ldots, 2^{k}\}$ such that $y(B_2)=B_2$.  We deduce that $\Sub_{\mathsf{S}_{2^{k-1}}}(y)\wr \mathsf{S}_2= \Sub_{\mathsf{S}_{2^k}}(x)$.

\item \underline{Case $\fix(x)=\fix(y)=0$:} We study the case when $o(y)=2^{k-1}$ and the case $o(y)<2^{k-1}$ separately.

\begin{itemize}

\item [a)] \underline{Case $o(y)<2^{k-1}$:} In this case, we have that $\Sub_{\mathsf{S}_{2^{k-1}}}(y)=\mathsf{S}_{2^{k-1}}$ by inductive hypothesis. It follows that $\mathsf{S}_{2^{k-1}}\wr \mathsf{S}_2\leq \Sub_{\mathsf{S}_{2^k}}(x)$ and hence, it suffices to find an element in $\Sub_{\mathsf{S}_{2^k}}(x)$ not lying in  $\mathsf{S}_{2^{k-1}}\wr \mathsf{S}_2$. Therefore, it suffices to find a block structure $B$ of $\{1,\ldots,2^k\}$ such that $x(B)=B$ and $\{1,2\ldots, 2^{k-1}\} \not\in B$. By Lemma \ref{NoFixedPoint}, there exist block structures $B_1$ and $B_2$ of $\{1,2\ldots, 2^{k-1}\}$ and $\{2^{k-1}+1,\ldots,2^k\}$, respectively, such that $B_i$ possesses two subsets $b_{i1}$ and $b_{i2}$ of size $2^{k-2}$ such that $z(b_{11})=b_{12}$ and $y(b_{21})=b_{22}$. Now, we define $B:=B_1\cup B_2\cup \{b_{11}\cup b_{21}, b_{12}\cup b_{22},\{1,\ldots ,2^k\}  \}$. It is not hard to see that $x(B)=B$. Thus, $\Sub_{\mathsf{S}_{2^k}}(x)=\mathsf{S}_{2^k}$.

\item [b)]  \underline{Case $o(y)=2^{k-1}$:} In this case,  $x$ is the product of $2$ disjoint $2^{k-1}$-cycles. Without loss of generality, we may assume that 
$$x=(1,2,\ldots,2^{k-1})(2^{k-1}+1,\ldots, 2^k).$$
We claim that the elements 
$$g=(1,2^{k-1}+1,2,2^{k-1}+2,\ldots, 2^{k-1},2^k)$$
and
$$h=(1,2^{k-1}+1)$$
lie in $\Sub_{\mathsf{S}_{2^k}}(x)$. Clearly, $x= g^2$ and hence $g \in \Sub_{\mathsf{S}_{2^k}}(x)$.  Now, let us define a block structure $B$ on $\{1,2,\ldots, 2^k\}$ as follows. For each $j \in \{0,\ldots, k\}$, $B$ contains $2^j$ blocks $b_{j1},\ldots, b_{j2^j}$ of size $2^{k-j}$ and two elements $a,c\in \{1,2,\ldots, 2^k\}$ lie in the same block if and only if $a\equiv c \pmod 2^{k-j}$. By definition, we have that $x(B)=B$. Moreover, $\{1,2^{k-1}\}\in B$ and hence $h(B)=B$. Thus, by Theorem \ref{SylowBlock}, there exists $P\in \Syl_2(\mathsf{S}_{2^k})$ such that $x,h\in P$. It follows that  $h \in \Sub_{\mathsf{S}_{2^k}}(x)$ and the claim holds. By the claim, we have that $\mathsf{S}_{2^k}=\langle g,h\rangle\leq  \Sub_{\mathsf{S}_{2^k}}(x)$.
\end{itemize}
\end{itemize}

\underline{Case $o(x)<2^{k-1}$:}  In this case,  we may write $x=yz$, with $\Sup(y)\cap \Sup(z)=\varnothing$ and $\Sub_{\mathsf{S}_{2^{k-1}}}(y)=\mathsf{S}_{2^{k-1}}$.  Reasoning as before, we have that  $\mathsf{S}_{2^{k-1}}\wr \mathsf{S}_2\leq \Sub_{\mathsf{S}_{2^k}}(x)$. Moreover, rearranging the cycles in $x$, we may write $x=gh$, where again $\Sup(g)\cap \Sup(h)=\varnothing$ and  $0<|\Sup(y)\cap \Sup(g)|<2^{k-1}$. We deduce that $\Sub_{\mathsf{S}_{2^{k-1}}}(g)\leq \Sub_{\mathsf{S}_{2^k}}(x)$, but since $\Sub_{\mathsf{S}_{2^{k-1}}}(g)\not\leq \mathsf{S}_{2^{k-1}}\wr \mathsf{S}_2$ we have that  $\Sub_{\mathsf{S}_{2^k}}(x)=\mathsf{S}_{2^k}$.
\end{proof}
\end{thm}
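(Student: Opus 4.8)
The plan is to argue by induction on $k$; the base case $k=3$ is a finite computation (e.g.\ with GAP). For the inductive step I would split according to the order of $x$, and inside the delicate case $o(x)=2^{k-1}$ further according to $\fix(x)$ and the order of the ``small-cycle part'' of $x$. The two workhorses are Proposition~\ref{MalleSub} together with Theorem~\ref{SylowBlock}, which reduce the computation of $\Sub_{\mathsf{S}_{2^k}}(x)$ to understanding which block structures are preserved by $x$, and Lemma~\ref{SubnormalProduct}, which feeds in the contributions of product subgroups. If $o(x)=2^k$ then $x$ is a single $2^k$-cycle, hence $2$-adic and so picky (Proposition~\ref{ClassificationPickySymmetric}, Type~I); by Corollary~\ref{PickySubnormal} its subnormalizer equals $\mathbf{N}_{\mathsf{S}_{2^k}}(P)$, which is proper. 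This handles (i).

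The heart of the argument is $o(x)=2^{k-1}$. Write $x=zy$ with $z$ a $2^{k-1}$-cycle, say $z=(1,\dots,2^{k-1})$, and $y$ supported on the remaining $2^{k-1}$ points. A block structure of the form $B=B_1\cup B_2\cup\{\{1,\dots,2^k\}\}$, with $B_1$ compatible with $z$ and $B_2$ compatible with $y$, satisfies $x(B)=B$, whence $\Sub_{\mathsf{S}_{2^{k-1}}}(y)\wr\mathsf{S}_2\leq\Sub_{\mathsf{S}_{2^k}}(x)$. The question is whether this inclusion is an equality. When $\fix(x)=\fix(y)>0$ I would show that \emph{every} $x$-invariant block structure splits in this way: if $a$ is a fixed point of $x$ and $b$ is the size-$2^{k-1}$ block containing $a$, then $x(b)=b$, and since the $x$-orbit of any point of $\Sup(z)$ already has size $2^{k-1}$, $b$ cannot meet $\{1,\dots,2^{k-1}\}$, so $b=\{2^{k-1}+1,\dots,2^k\}$; this pins $B$ down as a split structure and gives $\Sub_{\mathsf{S}_{2^k}}(x)=\Sub_{\mathsf{S}_{2^{k-1}}}(y)\wr\mathsf{S}_2$, proving (ii) and the ``moreover'' clause. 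When $\fix(x)=0$ and $o(y)<2^{k-1}$, induction gives $\Sub_{\mathsf{S}_{2^{k-1}}}(y)=\mathsf{S}_{2^{k-1}}$, so $\mathsf{S}_{2^{k-1}}\wr\mathsf{S}_2\leq\Sub_{\mathsf{S}_{2^k}}(x)$, and to reach all of $\mathsf{S}_{2^k}$ it suffices to produce one $x$-invariant block structure in which $\{1,\dots,2^{k-1}\}$ is \emph{not} a block; this is obtained by applying Lemma~\ref{NoFixedPoint} separately to $z$ and to $y$ and interleaving the two resulting half-structures. When $\fix(x)=0$ and $o(y)=2^{k-1}$, $x$ is a product of two disjoint $2^{k-1}$-cycles, and here I would write down an explicit $2^k$-cycle $g$ with $g^2=x$ (so $g\in\Sub_{\mathsf{S}_{2^k}}(x)$) and an explicit transposition $h$ lying in a common Sylow $2$-subgroup with $x$ (so $h\in\Sub_{\mathsf{S}_{2^k}}(x)$), with $\langle g,h\rangle=\mathsf{S}_{2^k}$.

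The remaining case $o(x)<2^{k-1}$ is similar in spirit: decompose $x=yz$ with disjoint supports so that $\Sub_{\mathsf{S}_{2^{k-1}}}(y)=\mathsf{S}_{2^{k-1}}$ by induction, giving $\mathsf{S}_{2^{k-1}}\wr\mathsf{S}_2\leq\Sub_{\mathsf{S}_{2^k}}(x)$; then rearrange the disjoint cycles of $x$ as $x=gh$ with $0<|\Sup(y)\cap\Sup(g)|<2^{k-1}$, so that the factor $\Sub_{\mathsf{S}_{2^{k-1}}}(g)=\mathsf{S}_{2^{k-1}}$ acts on $2^{k-1}$ points straddling the two halves and is therefore not contained in $\mathsf{S}_{2^{k-1}}\wr\mathsf{S}_2$, forcing $\Sub_{\mathsf{S}_{2^k}}(x)=\mathsf{S}_{2^k}$.

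I expect the $\fix(x)>0$ case to be the main obstacle: it hinges on the combinatorial rigidity statement that the only $x$-invariant block structures are the split ones, followed by the bookkeeping needed to upgrade this to the exact identification $\Sub_{\mathsf{S}_{2^k}}(x)=\Sub_{\mathsf{S}_{2^{k-1}}}(y)\wr\mathsf{S}_2$. The $\fix(x)=0$ subcases are where one must check carefully that one genuinely escapes $\mathsf{S}_{2^{k-1}}\wr\mathsf{S}_2$, which is precisely what Lemma~\ref{NoFixedPoint} and the explicit elements $g,h$ are designed to accomplish.
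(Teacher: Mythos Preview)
Your proposal is correct and follows essentially the same approach as the paper: induction on $k$ with the base case by GAP, the same three-way split on $o(x)$, the same block-structure rigidity argument in the $\fix(x)>0$ subcase, the same use of Lemma~\ref{NoFixedPoint} when $\fix(x)=0$ and $o(y)<2^{k-1}$, and the same explicit generators $g,h$ when $x$ is a product of two $2^{k-1}$-cycles. One small point: in the final case $o(x)<2^{k-1}$ you assert $\Sub_{\mathsf{S}_{2^{k-1}}}(g)=\mathsf{S}_{2^{k-1}}$, which need not hold (the rearranged $g$ could fall into case~(ii) on its $2^{k-1}$ points); the paper only uses, and only needs, that $\Sub_{\mathsf{S}_{2^{k-1}}}(g)\not\leq \mathsf{S}_{2^{k-1}}\wr\mathsf{S}_2$, which follows since any Sylow $2$-subgroup containing $g$ already acts transitively on a $2^{k-1}$-set straddling the two halves.
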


Now, we prove a result which implies Theorem \ref{thmB} for $\mathsf{S}_{2^k}$. The condition (III) of the following result may look a technical condition, but it will be relevant for the proof.

\begin{thm}\label{TheoremB}
Let $k\geq 3$ and let $g \in \mathsf{S}_{2^k}$ be a $2^k$-cycle. For any $2$-element $x \in \mathsf{S}_{2^k}$ there exists  a map 
$$\Gamma:\Irr^x(\mathsf{S}_{2^k})\rightarrow \Irr^x(\Sub_{\mathsf{S}_{2^k}}(x)) $$
satisfying the following properties.
\begin{itemize}
\item [(I)]$\Gamma(\chi)(1)_2=\chi(1)_2$ for every $\chi\in \Irr^x(\mathsf{S}_{2^k})$.

\item[(II)]  $\Gamma(\chi)(x)=\pm \chi(x)$ for every $\chi\in \Irr^x(\mathsf{S}_{2^k})$.

\item[(III)] For each $\chi \in \Irr_{2'}(\mathsf{S}_{2^k})$ there exists $\varepsilon \in \{1,-1\}$ (depending on $\chi$) such that $\Gamma(\chi)(x)=\varepsilon\chi(x) $ and $\Gamma(\chi)(g)=\varepsilon\chi(g) $.
\end{itemize}
\begin{proof}
 We proceed by induction on $k$. For $k=3$, the result can be checked by GAP \cite{gap}. By inductive hypothesis, we assume that the result holds for $k-1$.

If $\Sub_{\mathsf{S}_{2^k}}(x)=\mathsf{S}_{2^k}$, then the result follows trivially. We may assume that $\Sub_{\mathsf{S}_{2^k}}(x)<\mathsf{S}_{2^k}$ and hence, by Theorem \ref{Subnormalizers},  $x$ is either a $2^k$-cycle or $o(x)=2^{k-1}$ and $\fix(x)>0$.

If $x$ is a $2^k$-cycle, then the result follows from Lemma \ref{TypeI}. Thus, we may assume that $x$ can be written as $zy$ for $z$ a $2^{k-1}$-cycle, $y\in \mathsf{S}_{2^{k-1}}$, $\fix(y)=\fix(x)>0$ and  $\Sub_{\mathsf{S}_{2^k}}(x)=\Sub_{\mathsf{S}_{2^{k-1}}}(y)\wr \mathsf{S}_2$. For the remainder of the proof we write $H$ to denote $\Sub_{\mathsf{S}_{2^{k-1}}}(y)$ and $K$ to denote $\Sub_{\mathsf{S}_{2^{k}}}(x)$. Then $K= H\wr \mathsf{S}_2$.

By inductive hypothesis, we have that there exists a bijection
$$\Omega: \Irr^y(\mathsf{S}_{2^{k-1}})\rightarrow \Irr^y(H)$$
satisfying  (I), (II) and (III).

We observe that, since $\Omega$ satisfies  (III), $\Omega(\chi)(z)=\pm\chi(z)=\pm1$ for any $\chi \in \Irr_{2'}(\mathsf{S}_{2^{k-1}})$. Since $\Omega$ satisfies  condition (I), we have that $\Irr_{2'}(H)=\{\Omega(\chi)\mid \chi \in \Irr_{2'}(\mathsf{S}_{2^{k-1}})\}$.

We claim now that $\Irr^z(H)=\Irr_{2'}(H)$. We observe that $\mathbf{C}_{\mathsf{S}_{2^{k-1}}}(z)=\langle z \rangle\subseteq H$. Applying the second orthogonality relation, we have 
\begin{align*}2^{k-1} &=   |\mathbf{C}_{H}(z)|=\sum_{\phi\in \Irr^z(H)}|\phi(z)|^2\geq \\
&\geq \sum_{\phi\in \Irr_{2'}(H)}|\phi(x)|^2=\sum_{\chi\in \Irr_{2'}(\mathsf{S}_{2^{k-1}})}|\Omega(\chi)(z)|^2=2^{k-1}.\end{align*}
This forces $\Irr^z(H)=\Irr_{2'}(H)$ as we claimed.

We make a brief discussion on the characters in $\Irr(K)$. Let  $\chi \in \Irr(K)$ and let $\phi,\psi\in \Irr(H)$ such that $[\phi\times \psi,\chi_{H\times H}]\not=0$.  If $\phi\not=\psi$, then $(\phi\times \psi)^{K}\in \Irr(K)$ and hence $\chi=(\phi\times \psi)^{K}$. Moreover, in this case
$$\chi(x)=\phi(z)\psi(y)+\phi(y)\psi(z).$$ 
 On the other hand, if  $\phi=\psi$, then $\phi\times \phi$ is extendible to  $\Psi\in \Irr(K)$. Thus, $\chi$ is either $\Psi$ or $\Psi\cdot\rho$, where $\Irr(\mathsf{S}_2)=\{1,\rho\}$.

Now, we define  $\Gamma(\chi)$ for $\chi \in \Irr^{x}(\mathsf{S}_{2^k})$. We have to distinguish cases.

\underline{Case $\chi \in \Irr_{2'}(\mathsf{S}_{2^k})$:}  For $0\leq m \leq 2^{k-1}-1$, we write  $\tau(m,k-1)=(2^{k-1}-m,1^m)$. We also write  $\chi^m$ to denote the irreducible character of $\mathsf{S}_{2^{k-1}}$ associated to $\tau(m,k-1)$. We know that $$\Irr_{2'}(\mathsf{S}_{2^{k-1}})=\{\chi^m\mid 0\leq m \leq 2^{k-1}-1\}.$$ We also define $\chi^{m,1}$ and $\chi^{m,-1}$ as the irreducible characters  of $\mathsf{S}_{2^k}$ associated to the partitions (hooks) $\tau(m,k)=(2^{k}-m,1^{m})$ and $\tau(2^{k-1}+m,k)=(2^{k-1}-m,1^{2^{k-1}+m})$, respectively. We observe that $$\Irr_{2'}(\mathsf{S}_{2^{k}})=\{\chi^{m,i}\mid0\leq m \leq 2^{k-1}-1, i \in \{1,-1\}\}.$$

Let $0\leq m\leq 2^{k-2}-1$ and let $a=\chi^{m}(y)$.  By the Murnaghan--Nakayama  rule, we deduce  that 
\begin{align*}& \{(\chi^{t,i}(g),\chi^{t,i}(x))\mid t\in\{m,2^{k-1}-m-1\},i\in \{1,-1\}\}=\\&
=\{(1,a),(1,-a)(-1,a),(-1,-a)\}.\end{align*}
We remark that we had to study the cases $y \in A_{2^{k-1}}$, $y \not\in A_{2^{k-1}}$ and the case $m$ even or $m$ odd,  separately.

Now, let $\Irr(\mathsf{S}_2)=\{1,\rho\}$. Given $t\in\{m,2^{k-1}-m-1\}$, we  have that $\Omega(\chi^t)\times \Omega(\chi^t)$ has an extension $\Psi^t\in \Irr(K)$. We write $\Phi^{t,1}$ to denote $\Psi^t$ and $\Phi^{t,-1}$ to denote $\Psi^t\cdot \rho$. Let $t\in\{m,2^{k-1}-m-1\}$ and let $i \in \{1,-1\}$. We know that $\Phi^{t,i}(y)=(-1)^i$. Moreover, since $\Omega$ satisfies condition (III), we have that $\Omega(\chi^t)(z)=$ and $\Omega(\chi^t)(y)$ for some $\varepsilon \in\{\pm1\}$. It follows that  $\Phi^{t,i}(x)=\Omega(\chi^t)(z) \Omega(\chi^t)(y)=(\varepsilon)^2\chi^t(z) \chi^t(y)=(-1)^t\chi^t(y)$.

It follows that 
\begin{align*}& \{(\Phi^{t,i}(g),\Phi^{t,i}(x))\mid t\in\{m,2^{k-1}-m-1\},i\in \{1,-1\}\}=\\&
=\{(1,a),(1,-a)(-1,a),(-1,-a)\}\end{align*}
for $y \in A_{2^{k-1}}$ and
\begin{align*}& \{(\Phi^{t,i}(g),\Phi^{t,i}(x))\mid t\in\{m,2^{k-1}-m-1\},i\in \{1,-1\}\}=\\&=\{(1,(-1)^ma),(1,(-1)^ma)(-1,(-1)^ma),(-1,(-1)^ma)\}\end{align*}
for $y \not \in A_{2^{k-1}}$.

From this discussion we deduce that there exists a bijection (depending on $m$)
$$f: \{m,2^{k-1}-m-1\}\times \{1,-1\}\to \{m,2^{k-1}-m-1\}\times \{1,-1\}$$
such that either 
$$(\chi^{t,i}(g),\chi^{t,i}(x))=(\Phi^{f(t,i)}(g),\Phi^{f(t,i)}(x))$$ or $$(\chi^{t,i}(g),\chi^{t,i}(x))=(-\Phi^{f(t,i)}(g),-\Phi^{f(t,i)}(x)).$$
Given $m$ with $0\leq m\leq 2^{k-2}-1$ and $t\in\{m,2^{k-1}-m-1\},i\in \{1,-1\}$  we define $\Gamma(\chi^{t,i})=\Phi^{f(t,i)}$. Then $\Gamma$ defines a bijection from $\Irr_{2'}(\mathsf{S}_{2^k})$ to $\Irr_{2'}(K)$, satisfying  (III).

\underline{Case $\chi=\chi^{\lambda}$ for  $\chi^{\lambda}\not \in  \Irr_{2'}(\mathsf{S}_{2^k})$ and $\lambda$ possessing a unique $2^{k-1}$-hook:}  Let $\tau$ be a hook of length $2^{k-1}$ and let $\mu$ be a partition of $2^{k-1}$ which is not a hook. By Lemma \ref{ext}, we know that there exists a unique partition $\lambda(\tau,\mu)$ such that $\lambda(\tau,\mu)$ has $\tau$ as a hook and $\lambda(\tau,\mu)\setminus \tau=\mu$.  In this case, we also have that $\chi^{\lambda(\tau,\mu)}(1)_2=2\chi^{\mu}(1)_2$. Moreover, applying the Murnaghan--Nakayama rule, we have that 
\begin{equation}
\label{eq:1}
\chi^{\lambda(\tau,\mu)}(x)=(-1)^{ht(\tau)}\chi^{\mu}(y)=\chi^{\tau}(z)\chi^{\mu}(y).
\end{equation}
 Thus, $\chi^{\lambda(\tau,\mu)}\in \Irr^x (K)$ if and only if $\chi^{\mu}\in \Irr^y (H)$. Now,  we define $\Gamma(\chi^{\lambda(\tau,\mu)})=(\Omega(\chi^{\tau}),\Omega(\chi^{\mu}))^{K}\in \Irr(K)$. We have
$$\Gamma(\chi^{\lambda(\tau,\mu)})(1)_2=2\Omega(\chi^{\tau})(1)_2\Omega(\chi^{\mu})(1)_2=2\chi^{\mu}(1)_2=\chi^{\lambda(\tau,\mu)}(1)_2,$$
where the second equality holds because $\Omega$ satisfies  (I).

In addition, $$\Gamma(\chi^{\lambda(\tau,\mu)})(x)=\Omega(\chi^{\tau})(z)\Omega(\chi^{\mu})(y)=\pm\chi^{\mu}(y)=\pm\chi^{\lambda(\tau,\mu)}(x),$$ where the second equality holds because $\Omega$ satisfies (II) and (III). Therefore, $\Gamma$ satisfies (I) and (II) for the characters of the form $\chi^{\lambda(\tau,\mu)}$, where $\tau$ is a hook and $\mu$ is not a hook.

\underline{Case $\chi=\chi^{\lambda}$ for $\lambda$ having two $2^{k-1}$-hooks:} By Lemma \ref{Doublehook},  if $\lambda$ is such a partition, then $\lambda$ has the form $\gamma(a,b,k)=(2^{k-1}-a,2^{k-1}-b+1,2^a,1^{b-a-1})$ for $0\leq a<b\leq 2^{k-1}-1$. We know that $\tau(a+1,k-1)$ and $\tau(b-1,k-1)$ are $2^{k-1}$-hooks of $\gamma(a,b,k)$. Moreover, 
$$ \gamma(a,b,k)\setminus \tau(a+1,k-1)=\tau(b,k-1) \text{ and } \gamma(a,b,k)\setminus \tau(b-1,k-1)=\tau(a,k-1).$$
In addition, $\chi^{\gamma(a,b,k)}(1)_2=2$ by Lemma \ref{Doublehook}.

Given $0\leq a<b\leq 2^{k-1}-1$, we write $\chi^{(a,b)}$ to denote the character $\chi^{\gamma(a,b,k)}\in \Irr(\mathsf{S}_{2^k})$.  By the Murnaghan--Nakayama rule, we have that

\begin{equation}
\label{eq:2}
 \begin{aligned}
\chi^{(a,b)}(x)&=\chi^{a+1}(z)\chi^{b}(y)+\chi^{b-1}(z)\chi^{a}(y)=\\& =(-1)(\chi^{a}(z)\chi^{b}(y)+\chi^{b}(z)\chi^{a}(y)).
 \end{aligned}
\end{equation}

We define $\Gamma(\chi^{(a,b)})=(\Omega(\chi^{a}),\Omega(\chi^{b}))^{K}\in \Irr(K)$. Since $\Omega$ satisfies  (I), we deduce that $\Gamma(\chi^{(a,b)})(1)_2=2\Omega(\chi^{a})(1)_2\Omega(\chi^{b})(1)_2=2$. Moreover, since  $\Omega$ satisfies  (III), we know that, for each $i \in \{a,b\}$ there exists $\varepsilon_i\in \{1,-1\}$ such that $\Omega(\chi^{i})(z)=\varepsilon_i\chi^{i}(z)$ and $\Omega(\chi^{i})(y)=\varepsilon_i\chi^{i}(y)$. It follows that 
\begin{align*}\Gamma(\chi^{(a,b)})(x)& =\Omega(\chi^{a})(z)\Omega(\chi^{b})(y)  +\Omega(\chi^{a})(z)\Omega(\chi^{b})(y)=\\ &=\varepsilon_a\varepsilon_b(\chi^{a}(z)\chi^{b}(y)+\chi^{b}(z)\chi^{a}(y))=\pm \chi^{(a,b)}(x).\end{align*}

Thus, $\Gamma$ satisfies (I) and (II) for these elements.

Therefore, we have defined a bijection  from $\Irr^x(\mathsf{S}_{2^k})$ to $\Irr^x(K)$  satisfying  (I), (II) and (III).
\end{proof}
\end{thm}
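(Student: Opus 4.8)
The plan is to proceed by induction on $k$, reducing the problem for $x$ to the analogous problem for the ``small-cycle part'' $y$ inside $H = \Sub_{\mathsf{S}_{2^{k-1}}}(y)$, with the base case $k=3$ handled computationally. First I would dispose of the easy cases: if $\Sub_{\mathsf{S}_{2^k}}(x) = \mathsf{S}_{2^k}$ there is nothing to prove, and if $x$ is a $2^k$-cycle then Lemma \ref{TypeI} already gives $\Irr^x(\mathsf{S}_{2^k}) = \Irr_{2'}(\mathsf{S}_{2^k})$ with all values $\pm 1$, so the identity correspondence (composed with any bijection onto $\Irr^x(P)$, which one can arrange to respect the $2^k$-cycle $g$) works. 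By Theorem \ref{Subnormalizers}, the only remaining case is $o(x) = 2^{k-1}$ with $\fix(x) > 0$, where $x = zy$ with $z$ a $2^{k-1}$-cycle on one half, $y \in \mathsf{S}_{2^{k-1}}$ on the other, $\fix(y) = \fix(x) > 0$, and $K := \Sub_{\mathsf{S}_{2^k}}(x) = H \wr \mathsf{S}_2$ with $H := \Sub_{\mathsf{S}_{2^{k-1}}}(y)$.

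The core of the argument has two ingredients. The first is a combinatorial/character-value input: by the Murnaghan--Nakayama rule, every $\chi \in \Irr^x(\mathsf{S}_{2^k})$ must have $\lambda$ admitting a $2^{k-1}$-hook, and by the removal analysis in Proposition \ref{Doublehook}, Lemma \ref{ext} and Lemma \ref{ext2} such a $\lambda$ has either exactly one or exactly two $2^{k-1}$-hooks. This splits $\Irr^x(\mathsf{S}_{2^k})$ into three families: the $2'$-characters (the hooks $\tau(m,k)$), the $\chi^{\lambda(\tau,\mu)}$ with $\mu$ a non-hook partition of $2^{k-1}$, and the $\chi^{\gamma(a,b,k)}$ with two $2^{k-1}$-hooks. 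For the first family one computes $\chi(x)$ and $\chi(g)$ explicitly from Murnaghan--Nakayama (the four pairs of signed values for the relevant pair of hook indices), builds the extensions $\Phi^{t,i}$ of $\Omega(\chi^t) \times \Omega(\chi^t)$ to $K$ from Theorem \ref{Mattarei}, and uses condition (III) of the inductive hypothesis to match values up to a global sign; this is where one needs the auxiliary claim $\Irr^z(H) = \Irr_{2'}(H)$, proved by a second-orthogonality-relation count using $\mathbf{C}_{\mathsf{S}_{2^{k-1}}}(z) = \langle z\rangle \subseteq H$. For the second and third families one sets $\Gamma(\chi^{\lambda(\tau,\mu)}) = (\Omega(\chi^\tau), \Omega(\chi^\mu))^K$ and $\Gamma(\chi^{(a,b)}) = (\Omega(\chi^a),\Omega(\chi^b))^K$ respectively, induced characters which are irreducible since the two constituents differ; the degree condition (I) follows since induction from $H \times H$ to $H \wr \mathsf{S}_2$ multiplies the degree by $2$ and $\Omega$ preserves $2$-parts of degrees, and (II) follows from the Murnaghan--Nakayama formulas \eqref{eq:1}, \eqref{eq:2} together with conditions (II) and (III) for $\Omega$, which force the two cross-terms to carry matching signs.

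The main obstacle I anticipate is the bookkeeping in the $2'$-case: one must verify that, across all the parities and across the dichotomy $y \in A_{2^{k-1}}$ versus $y \notin A_{2^{k-1}}$, the set of pairs $(\chi^{t,i}(g), \chi^{t,i}(x))$ coming from $\mathsf{S}_{2^k}$ coincides, up to simultaneous sign change on each pair, with the set of pairs $(\Phi^{s,j}(g),\Phi^{s,j}(x))$ coming from $K$ — and that this matching can be organized into a genuine bijection $f$ on $\{m, 2^{k-1}-m-1\} \times \{1,-1\}$ simultaneously for all $m$ in the range $0 \le m \le 2^{k-2}-1$, so that the resulting $\Gamma$ is well-defined and bijective on all of $\Irr_{2'}(\mathsf{S}_{2^k})$. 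The point that makes this work is that condition (III) in the inductive hypothesis ties $\Omega(\chi^t)(z)$ and $\Omega(\chi^t)(y)$ to the \emph{same} sign $\varepsilon$, so the product $\Omega(\chi^t)(z)\Omega(\chi^t)(y) = \varepsilon^2 \chi^t(z)\chi^t(y)$ is sign-independent; this is exactly why condition (III) is carried along through the induction rather than just (I) and (II). Once all three families are handled, the disjointness of the three families on both sides (distinguished by $2$-part of degree and by the number of $2^{k-1}$-hooks) shows $\Gamma$ is a bijection $\Irr^x(\mathsf{S}_{2^k}) \to \Irr^x(K)$ satisfying (I), (II), (III), completing the induction.
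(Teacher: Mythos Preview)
Your proposal is correct and follows essentially the same approach as the paper: induction on $k$ with the base case $k=3$ checked computationally, reduction via Theorem \ref{Subnormalizers} to the case $x=zy$ with $K=H\wr\mathsf{S}_2$, the auxiliary claim $\Irr^z(H)=\Irr_{2'}(H)$ by second orthogonality, and the same three-way case split with the same constructions ($\Phi^{t,i}$ extending $\Omega(\chi^t)\times\Omega(\chi^t)$ for the hooks, and induced characters $(\Omega(\chi^\tau),\Omega(\chi^\mu))^K$, $(\Omega(\chi^a),\Omega(\chi^b))^K$ for the other two families). You have also correctly identified the role of condition (III) in making the cross-term signs cancel, which is exactly the paper's key observation.
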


Now, we work towards a general proof of Theorem \ref{thmB}. First, we have to introduce results to determine the structure of subnormalizers of $2$-elements in $\mathsf{S}_n$ for $n\neq 2^k$.  Given $\Omega \subseteq \{1,\ldots,n\}$, we write $\overline{\Omega}:=\{1,\ldots,n\}\setminus \Omega$. In particular $\mathsf{S}_{\Omega}\times \mathsf{S}_{\overline{\Omega}}$ is a maximal subgroup of $\mathsf{S}_n$.

\begin{lem}\label{Reduction1}
Let $\ell\geq 1$, let $m \geq 1$ and let $y \in \mathsf{S}_{\ell2^m}$ be a product of $\ell$ cycles of length $2^m$. The following holds
\begin{itemize}

\item[(i)] If $\ell=1$, then $\Sub_{\mathsf{S}_{2^m}}(y)=P_{2^m}$, where $P_{2^m}\in \Syl_2(\mathsf{S}_{2^m})$ with $y \in P_{2^m}$. 

\item[(ii)] If $\ell>1$, then $\Sub_{\mathsf{S}_{\ell 2^m}}(y)= \mathsf{S}_{\ell 2^m}$.
\end{itemize}
\begin{proof}
We prove the result by induction on $\ell$. 

 If $\ell=1$, then $\ell2^m=2^m$ and $y$ is a $2^m$-cycle. Then $y$ is a picky element of $\mathsf{S}_{2^m}$ by Proposition \ref{ClassificationPickySymmetric}. Thus,  $\Sub_{\mathsf{S}_{2^m}}(x)=P_{2^m}$ and the result follows in this case.

Now, let us assume that $\ell>1$ and the result holds for every $t<\ell$.  Let $\ell 2^m=2^{n_1}+\ldots+2^{n_f}$ with $n_f>n_{f-1}>\ldots>n_1\geq m$. Assume first that $f=1$. Since $\ell>1$, we deduce that $y$ is not a $2^{n_f}$-cycle and hence $\Sub_{\mathsf{S}_{\ell 2^m}}(y)=\mathsf{S}_{\ell 2^m}$ by Theorem \ref{Subnormalizers}.

Now, let us assume that $f>1$. In particular $\ell\geq 3$.    Let $r=2^{n_1-m}$. Then we write $y=xz$, where $x$ is a product of $\ell-r$ cycles of length $m$, $z$ is a product of $r$ cycles of length $2^m$ and $\sup(x)\cap\Sup(z)=\varnothing$. Let $\Omega=\Sup(\Omega)$ and let $P_{\overline{\Omega}}\in \mathsf{S}_{\overline{\Omega}}$ with $z\in P_{\overline{\Omega}}$. 
By Lemma \ref{SylowBlock}, we deduce that 
$$\Sub_{\mathsf{S}_{\Omega}}(x)\times \Sub_{\mathsf{S}_{\overline{\Omega}}}(z)\leq \Sub_{\mathsf{S}_{\ell2^m}}(y).$$
We also have that $1<\ell-r<\ell$ and hence $\Sub_{\mathsf{S}_{\Omega}}(x)=\mathsf{S}_{\Omega}$. Moreover, $P_{\overline{\Omega}}\leq \Sub_{\mathsf{S}_{\overline{\Omega}}}(z)$. Thus, 
$$\mathsf{S}_{\Omega}\times P_{\overline{\Omega}}\leq  \Sub_{\mathsf{S}_{\ell2^m}}(y).$$
Seeking for a contradiction, let us assume that $\Sub_{\mathsf{S}_{\ell2^m}}(y)<\mathsf{S}_{\ell2^m}$. We notice that, since $|\overline{\Omega}|=r\cdot 2^m=2^{n_1}$, then $P_{\overline{\Omega}}$ acts transitively on $\overline{\Omega}$. Therefore, the unique maximal subgroup of $\mathsf{S}_{\ell2^m}$ containing $\Sub_{\mathsf{S}_{\ell2^m}}(y)$ is $\mathsf{S}_{\Omega}\times \mathsf{S}_{\overline{\Omega}}$. Now, let us write $y=x_1z_1$ where  $x_1$ is a product of $\ell-r$ cycles of length $m$, $z_1$ is a product of $r$ cycles of length $2^m$, $\sup(x_1)\cap\Sup(z_1)=\varnothing$ and $\Sup(x_1)\neq \Sup(x)$. Let $\Delta=\Sup(x_1)$. Reasoning as above, we deduce that $\mathsf{S}_{\Delta}\leq \Sub_{\mathsf{S}_{\ell2^m}}(y)$ but $\mathsf{S}_{\Delta}\not \leq \mathsf{S}_{\Omega}\times \mathsf{S}_{\overline{\Omega}}$, which is a contradiction. This contradiction shows that  $\Sub_{\mathsf{S}_{\ell2^m}}(y)=\mathsf{S}_{\ell2^m}$.
\end{proof}
\end{lem}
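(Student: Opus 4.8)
The plan is to dispose of (i) immediately and then prove (ii) by induction on $\ell$. For (i): the element $y$ is a $2^m$-cycle in $\mathsf{S}_{2^m}$, hence a $2$-adic element, hence picky by Proposition~\ref{ClassificationPickySymmetric}; so Corollary~\ref{PickySubnormal} gives $\Sub_{\mathsf{S}_{2^m}}(y)=\mathbf{N}_{\mathsf{S}_{2^m}}(P_{2^m})$, and since $\mathbf{N}_{\mathsf{S}_{2^m}}(P_{2^m})=P_{2^m}$ for the prime $2$ (Subsection~\ref{SubsectCentralizers}) we are done. For (ii) I induct on $\ell$; the only genuinely small case is $y=(1\,2)(3\,4)\in\mathsf{S}_4$, where $y$ lies in all three Sylow $2$-subgroups, these generate $\mathsf{S}_4$, and so $\Sub_{\mathsf{S}_4}(y)=\mathsf{S}_4$ by Proposition~\ref{MalleSub}. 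Now let $\ell\ge 2$ and write the $2$-adic expansion $\ell 2^m=2^{n_1}+\dots+2^{n_f}$ with $m\le n_1<\dots<n_f$. If $f=1$ then $\ell=2^{n_1-m}>1$, so $y$ is not a $2^{n_1}$-cycle, and $o(y)=2^m$ equals $2^{n_1-1}$ only when $\ell=2$, in which case $\fix(y)=0$; thus (for $n_1\ge 3$, the case $n_1=2$ being the base case) neither of the two alternatives of Theorem~\ref{Subnormalizers} for a proper subnormalizer holds, so $\Sub_{\mathsf{S}_{2^{n_1}}}(y)=\mathsf{S}_{2^{n_1}}$.

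Assume from now on that $f\ge 2$. Then $\ell 2^m\ge 2^{n_1}+2^{n_1+1}=3\cdot 2^{n_1}$, so putting $r:=2^{n_1-m}$ we get $\ell\ge 3r$, whence $2\le \ell-r<\ell$. Let $C_1,\dots,C_\ell$ be the supports of the $\ell$ cycles of $y$ (each of size $2^m$), noting $\fix(y)=0$ and $C_1\cup\dots\cup C_\ell=\{1,\dots,\ell 2^m\}$. For every $S\subseteq\{1,\dots,\ell\}$ with $|S|=\ell-r$ set $\Delta_S=\bigcup_{i\in S}C_i$ and factor $y=x_Sz_S$, where $x_S$ is the product of the cycles supported in $\Delta_S$ and $z_S$ the product of the remaining $r$ cycles, so $z_S\in\mathsf{S}_{\overline{\Delta_S}}$ with $|\overline{\Delta_S}|=r2^m=2^{n_1}$. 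Since $1<\ell-r<\ell$, the inductive hypothesis (part (ii) for the parameter $\ell-r$) gives $\Sub_{\mathsf{S}_{\Delta_S}}(x_S)=\mathsf{S}_{\Delta_S}$. Combining Lemma~\ref{SubnormalProduct} with the elementary monotonicity $\Sub_H(w)\le\Sub_G(w)$ whenever $w\in H\le G$ (immediate, since the subnormality of $\langle w\rangle$ in $\langle w,g\rangle$ is intrinsic to the subgroup $\langle w,g\rangle$), we obtain
\[
\mathsf{S}_{\Delta_S}\times 1\ \le\ \Sub_{\mathsf{S}_{\Delta_S}}(x_S)\times\Sub_{\mathsf{S}_{\overline{\Delta_S}}}(z_S)\ \le\ \Sub_{\mathsf{S}_{\Delta_S}\times\mathsf{S}_{\overline{\Delta_S}}}(y)\ \le\ \Sub_{\mathsf{S}_{\ell 2^m}}(y),
\]
so that $\mathsf{S}_{\Delta_S}\le\Sub_{\mathsf{S}_{\ell 2^m}}(y)$ for \emph{every} such $S$.

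It now suffices to check that the subgroups $\mathsf{S}_{\Delta_S}$ with $|S|=\ell-r$ already generate $\mathsf{S}_{\ell 2^m}$. Recall that $\mathsf{S}_A$ and $\mathsf{S}_B$ generate $\mathsf{S}_{A\cup B}$ whenever $A\cap B\ne\varnothing$. Given $|S|,|S'|=\ell-r$ with $|S\cap S'|=\ell-r-1\ge 1$ we have $\Delta_S\cap\Delta_{S'}=\Delta_{S\cap S'}\ne\varnothing$, so $\langle\mathsf{S}_{\Delta_S},\mathsf{S}_{\Delta_{S'}}\rangle=\mathsf{S}_{\Delta_{S\cup S'}}$ with $|S\cup S'|=\ell-r+1$; iterating this (an induction on subset size, each step enlarging a subset by one index while keeping an overlap of size $\ge\ell-r-1\ge 1$) shows that $\langle\mathsf{S}_{\Delta_S}:|S|=\ell-r\rangle$ contains $\mathsf{S}_{\Delta_T}$ for every $T$ with $|T|\ge\ell-r$, in particular $\mathsf{S}_{\Delta_{\{1,\dots,\ell\}}}=\mathsf{S}_{\ell 2^m}$. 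Hence $\Sub_{\mathsf{S}_{\ell 2^m}}(y)=\mathsf{S}_{\ell 2^m}$, completing the induction.

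The work here is largely organizational: getting the case split on $f$ right, checking $1<\ell-r<\ell$ so the inductive hypothesis genuinely applies to $x_S$, and verifying the elementary but essential fact that the subgroups $\mathsf{S}_{\Delta_S}$ generate the full symmetric group — this is what lets us avoid the more delicate analysis of which maximal subgroup of $\mathsf{S}_{\ell 2^m}$ could contain the subnormalizer. The only point where real structural input enters is the $f=1$ branch, which rests on the earlier classification in Theorem~\ref{Subnormalizers}; everything else is bookkeeping with Lemma~\ref{SubnormalProduct} and the $2$-adic expansion.
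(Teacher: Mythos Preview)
Your proof is correct and, for part~(ii) with $f\ge 2$, takes a genuinely different route from the paper. The paper fixes \emph{one} decomposition $y=xz$, shows $\mathsf{S}_\Omega\times P_{\overline\Omega}\le\Sub_{\mathsf{S}_{\ell 2^m}}(y)$, and then argues via maximal subgroups: since $P_{\overline\Omega}$ is transitive on $\overline\Omega$, the only maximal subgroup that could contain the subnormalizer is $\mathsf{S}_\Omega\times\mathsf{S}_{\overline\Omega}$; a second decomposition with different support then gives an element outside that maximal subgroup, a contradiction. You instead observe that \emph{every} $(\ell-r)$-subset $S$ yields $\mathsf{S}_{\Delta_S}\le\Sub_{\mathsf{S}_{\ell 2^m}}(y)$ by induction and Lemma~\ref{SubnormalProduct}, and then generate the full symmetric group directly from the overlapping $\mathsf{S}_{\Delta_S}$'s. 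Your argument is more elementary: it bypasses the transitivity of the Sylow subgroup on $\overline\Omega$ and the classification of maximal overgroups of $\mathsf{S}_\Omega\times P_{\overline\Omega}$, at the cost of a short bookkeeping step (the overlap induction). You are also more careful than the paper in the $f=1$ branch, explicitly checking that $\fix(y)=0$ rules out alternative~(ii) of Theorem~\ref{Subnormalizers} and handling the $n_1=2$ base case $\mathsf{S}_4$ separately (Theorem~\ref{Subnormalizers} is stated only for $k\ge 3$); the paper is tacit on both points.
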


\begin{thm}\label{Reduction2}
Let $n$ be an integer such that $n$ is not a power of $2$ and let $x\in \mathsf{S}_{n}$ be a $2$-element. Assume that the lengths of the cycles in $x$ are $2^{m_i}$ for some integers $0\leq m_1\leq m_2\ldots \leq m_r$ and that $x$ possesses $\ell_i\geq 1$ cycles of length $2^{m_i}$ for every $1\leq i \leq r$. Then $\Sub_{\mathsf{S}_n}(x)<\mathsf{S}_n$ if and only if  there exists $t\geq 2$ such that $2^{m_t}>\sum_{i=1}^{t-1}\ell_i2^{m_i}$. Moreover, in such a case
$$\Sub_{\mathsf{S}_n}(x)=\Sub_{\mathsf{S}_{k}}(y)\times \Sub_{\mathsf{S}_{n-k}}(z),$$
where $k=2^{m_r}+\ldots+2^{m_t}$, $y$ is the product of all cycles of $x$ of length at least $2^{m_t}$ and $z$ is the product of all cycles of $x$ of length smaller than $2^{m_t}$.
\begin{proof}
First, let us assume that there exists $t\geq 2$ such that $2^{m_t}>\sum_{i=1}^{t-1}\ell_i2^{m_i}$ and let us write $m=m_t$. Let $n= \sum_{i=1}^f2^{n_i}$ for $0\leq n_1<n_2< \ldots < n_f$ and let us assume that $n_{s-1}<m\leq n_s$. Our hypotheses imply that $k=2^{m_r}+\ldots+2^{m_t}=2^{n_f}+\ldots+2^{n_s}$ and $n-k=2^{n_{s-1}}+\ldots+2^{n_1}$.

Without loss of generality, we may assume that $\Sup(y)=\{1,2,\ldots ,k\}$.
Let $B_1$ be a block structure of $\{1,\ldots,k\}$ such that $y(B_1)=B_1$ and let $B_2$ be a block structure of $\{k+1,\ldots,n\}$ such that $z(B_2)=B_2$. Then $B=B_1\cup B_2$ is a block structure of $\{1,\ldots,n\}$ such that $x(B)=B$.

We claim that if  $B$ is  a block structure of $\{1,\ldots,n\}$ such that $x(B)=B$, then $B=B_1\cup B_2$, where  $B_1$ is a block $\{1,\ldots,k\}$ such that $y(B_1)=B_1$ and  $B_2$ is a block structure of $\{k+1,\ldots,n\}$ such that $z(B_2)=B_2$. Seeking for a contradiction, let us assume that there exists $a \leq k$ such that $a$ does not lie in any $b\in B$ with $|b|=2^m$. It is easy to see that $x(a)$ does not lie in any $b\in B$ with $|b|=2^m$ (otherwise $a \in x^{-1}(b)\in B$). Reasoning similarly, we have that the elements $a,x(a),\ldots, x^{2^m}(a)$ are pairwise different and none of them lie in any subset  $b \in B$ with  $|b|=2^m$. This is impossible since $2^m>n-k$.

From the two paragraphs above we deduce that $\Sub_{\mathsf{S}_n}(x)=\Sub_{\mathsf{S}_{k}}(y)\times \Sub_{\mathsf{S}_{n-k}}(z)$.

Now, we prove that $\Sub_{\mathsf{S}_n}(x)=\mathsf{S}_n$ if $x$ satisfies that $2^{m_t}\leq \sum_{i=1}^{t-1}\ell_i2^{m_i}$ for every $2\leq t \leq r$.  We proceed by induction on $r$. For $r=1$ the result follows from Lemma \ref{Reduction1}. Let us assume that the result holds for $r-1$.

Now, let us assume that $2^{m_t}\leq \sum_{i=1}^{t-1}\ell_i2^{m_i}$ for every $2\leq t \leq r$.   Let us write $x= y\cdot z$, where $y$ is the product of the $\ell_r$ cycles of length $2^{m_r}$ and $z$ is the product of the rest of the cycles in $x$. It is easy to see that 
$$\Sub_{\mathsf{S}_{\ell_r2^{m_r}}}(y)\times \Sub_{\mathsf{S}_{n-\ell_r2^{m_r}}}(z)\leq  \Sub_{\mathsf{S}_n}(x).$$
We analyse the case $\ell_r=1$ and the case $\ell_r>1$ separately:

\underline{Case  $\ell_r=1$:} Let $\Omega=\Sup(y)$ and let $Q\in \Syl_2(\mathsf{S}_{\Omega})$ with $y \in Q$. 	It is easy to see that $\Sub_{\mathsf{S}_{\Omega}}(y)=Q$. We claim that $\Sub_{\mathsf{S}_{\overline{\Omega}}}(z)=\mathsf{S}_{\overline{\Omega}}$. If $n-k$ is not a power of $2$, then  $z$ satisfies that $2^{m_t}\leq \sum_{i=1}^{t-1}\ell_i2^{m_i}$ for every $2\leq t \leq r-1$ and hence $\Sub_{\mathsf{S}_{\overline{\Omega}}}(z)=\mathsf{S}_{\overline{\Omega}}$ by inductive hypothesis.  Now, let us assume that $n-k$ is a power of $2$. Since  $2^{m_r}\leq \sum_{i=1}^{r-1}\ell_i2^{m_i}$ we deduce that $m_r<n_f$ and hence $n-k=2^{n_f}$.  Now, since $o(z)<2^{m_r}<2^{n_f}$ then $\Sub_{\mathsf{S}_{\overline{\Omega}}}(z)=\mathsf{S}_{\overline{\Omega}}$ by Theorem \ref{Subnormalizers}.

From the claim we deduce that
$$Q\times \mathsf{S}_{\overline{\Omega}}\leq  \Sub_{\mathsf{S}_n}(x).$$
Thus, if $M$ is a maximal subgroup of $\mathsf{S}_n$ containing $\Sub_{\mathsf{S}_n}(x)$, then $M=\mathsf{S}_{\Omega}\times \mathsf{S}_{\overline{\Omega}}$.

Now, the hypothesis $2^{m_r}\leq \sum_{i=1}^{r-1}\ell_i2^{m_i}$ implies that $n_f>m_r$ and hence there exists a block structure $B$ of $\{1,\ldots,n\}$ such that, $x(B)=B$ and there exists $b \in B$ with $|b|=2^{n_f}$ and $\Omega\subset b$. Thus, $\Sub_{\mathsf{S}_n}(x)$ contains a cycle $y_1$ with $\Omega\subset \Sup(y_1)=b$. It follows that $\Sub_{\mathsf{S}_n}(x)\not \leq \mathsf{S}_{\Omega}\times \mathsf{S}_{\overline{\Omega}}$ and hence $\Sub_{\mathsf{S}_n}(x)=\mathsf{S}_n$.

\underline{Case  $\ell_r>1$:} In this case, $\Sub_{\mathsf{S}_{\Omega}}(y)=\mathsf{S}_{\Omega}$. We study the case when $n-k$ is a power of $2$ and the case when $n-k$ is not a power of $2$ separately.

\begin{itemize}

\item \underline{Case $n-k$ is not a power of $2$:} Since $z$ satisfies that $2^{m_t}\leq \sum_{i=1}^{t-1}\ell_i2^{m_i}$ for every $2\leq t \leq r-1$ we have $\Sub_{\mathsf{S}_{\overline{\Omega}}}(z)=\mathsf{S}_{\overline{\Omega}}$ by inductive hypothesis. Thus, $\mathsf{S}_{\Omega}\times \mathsf{S}_{\overline{\Omega}}\leq \Sub_{\mathsf{S}_n}(x)$. Now, let us write $x=y_1z_1$, where $y_1$ possesses exactly $\ell_{r}-1$ cycles of length $2^{m_r}$, $\Sup(y_1)\subseteq \Delta$, $\Sup(z_1)\subseteq \overline{\Delta}$ for some $\Delta\subset \{1,\ldots,n\}$ with $|\Delta|=\ell_r2^{m_r}$. Let $P \in \Syl_2(\mathsf{S}_{\Delta})$ such that $y_1\in P$. Then $P\leq   \Sub_{\mathsf{S}_n}(x)$ but $P \not \leq \mathsf{S}_{\Omega}\times \mathsf{S}_{\overline{\Omega}}$. This forces $ \Sub_{\mathsf{S}_n}(x)= \mathsf{S}_n$.

\item \underline{Case $n-k$ is a power of $2$:} In this case, $n-k=2^h$ for $h\geq m_r$. We also know that $o(z)\leq 2^{m_r-1}\leq 2^{h-1}$. If $o(z)\leq 2^{h-2}$, then $\Sub_{\mathsf{S}_{\overline{\Omega}}}(z)=\mathsf{S}_{\overline{\Omega}}$ and the result follows reasoning as in the previous case. Thus, we may assume that $o(z)=2^{h-1}$, which forces $m_{r}=h$. Let $Q \in \Syl_2(\mathsf{S}_{\overline{\Omega}})$ such that $z\in Q$. Then $\mathsf{S}_{\Omega}\times Q\leq  \Sub_{\mathsf{S}_n}(x)$.

Now, let us write $x=y_1z_1$, where $y_1$ possesses exactly $\ell_{r}-1$ cycles of length $2^{m_r}$, $\Sup(y_1)\subseteq \Delta$, $\Sup(z_1)\subseteq \overline{\Delta}$ for some $\Delta\subset \{1,\ldots,n\}$ with $|\Delta|=\ell_r2^{m_r}$. We have that either $\Sub_{\mathsf{S}_{\Delta}}(y_1)=\mathsf{S}_{\Delta}$ or $\ell_r=2$. If $\Sub_{\mathsf{S}_{\Delta}}(y_1)=\mathsf{S}_{\Delta}$, then $\mathsf{S}_{\Delta}, \mathsf{S}_{\Omega}\times Q\leq  \Sub_{\mathsf{S}_n}(x)$ and hence $ \Sub_{\mathsf{S}_n}(x)= \mathsf{S}_n$ (we notice that $\mathsf{S}_{\Delta}\not \leq \mathsf{S}_{\Omega}\times \mathsf{S}_{\overline{\Omega}}$). Thus, we may assume that $\ell_r=2$. and hence $n=2^{m_r+1}+2^{m_r}$.  Now, let us write  $x=y_2z_2$, where $y_2$ possesses only  one  cycle of length $2^{m_r}$, $\Sup(y_2)\subseteq \Lambda$, $\Sup(z_2)\subseteq \overline{\Lambda}$ for some $\Lambda\subset \{1,\ldots,n\}$ with $|\Lambda|=2\cdot2^{m_r}$ and $\Lambda\neq \Delta$. Let $P_1, Q_1,P_2$ and $Q_2$ be Sylow $2$-subgroups of $\mathsf{S}_{\Delta}, \mathsf{S}_{\overline{\Delta}}, \mathsf{S}_{\Lambda}$ and $\mathsf{S}_{\overline{\Lambda}}$ such that $y_1\in P_1$, $z_1\in Q_1$, $y_2\in P_2$ and $z_2\in Q_2$. It follows that $\mathsf{S}_{\Omega}\times Q,P_1\times Q_1,P_2\times Q_2\leq \Sub_{\mathsf{S}_n}(x)$, which forces $ \Sub_{\mathsf{S}_n}(x)= \mathsf{S}_n$.
\end{itemize}
Thus, the result holds.
\end{proof}
\end{thm}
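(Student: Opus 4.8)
The plan is to argue by induction on $r$, the number of distinct cycle lengths of $x$. The three recurring tools are: Malle's formula $\Sub_{\mathsf{S}_n}(x)=\langle\mathbf{N}_{\mathsf{S}_n}(P)\mid P\in\Syl_2(\mathsf{S}_n),\ x\in P\rangle$ (Proposition \ref{MalleSub}); the bijection between Sylow $2$-subgroups of $\mathsf{S}_n$ and block structures, a Sylow subgroup being the full stabiliser of its block structure (Theorem \ref{SylowBlock}); and the product estimate $\Sub_{\mathsf{S}_a}(u)\times\Sub_{\mathsf{S}_b}(v)\le\Sub_{\mathsf{S}_n}((u,v))\le\mathsf{S}_a\times\mathsf{S}_b$ for $a+b=n$, $u\in\mathsf{S}_a$, $v\in\mathsf{S}_b$, which follows from Lemma \ref{SubnormalProduct} together with the trivial remark that $\Sub_L(w)\le\Sub_M(w)$ whenever $w\in L\le M$ (subnormality of a chain is ambient-independent). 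The two boundary computations are already available: Lemma \ref{Reduction1} for a product of cycles of equal length, and Theorem \ref{Subnormalizers} for $2$-elements of $\mathsf{S}_{2^k}$.

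First I would prove the forward implication. Suppose $2^{m_t}>\sum_{i=1}^{t-1}\ell_i2^{m_i}$ for some $t\ge2$, put $m=m_t$, and split $x=yz$ with $y$ the cycles of length $\ge2^m$ and $z$ the cycles of length $<2^m$, so $|\Sup(y)|=k$, $|\Sup(z)|=n-k$, and the hypothesis reads exactly $2^m>n-k$. The inclusion $\Sub_{\mathsf{S}_k}(y)\times\Sub_{\mathsf{S}_{n-k}}(z)\le\Sub_{\mathsf{S}_n}(x)$ is immediate from Lemma \ref{SubnormalProduct}, viewing $\mathsf{S}_k\times\mathsf{S}_{n-k}$ as the stabiliser of $\Sup(y)$ in $\mathsf{S}_n$. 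For the reverse inclusion I would invoke Proposition \ref{MalleSub}: for $P\in\Syl_2(\mathsf{S}_n)$ with $x\in P$, write $P$ as the stabiliser of a block structure $B$ with $x(B)=B$. Since the binary digits of $n$ are $0$ or $1$, there is at most one top-level block of each size, so $x$ fixes every top-level block of $B$ setwise; hence the union $U$ of the top-level blocks of size $\ge2^m$ is $x$-invariant. Since $2^m>n-k=|\,\overline U\,|$ and every cycle of $y$ has at least $2^m$ points, no such cycle can lie inside a single top-level block contained in $\overline U$ (all of size $<2^m$), so — $x$ fixing each top-level block setwise — the support of every cycle of $y$ lies in $U$. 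Consequently $\Sup(y)=U$, $B$ restricts to $x$-invariant block structures on $U$ and on $\overline U$, and $P=P_1\times P_2$ with $y\in P_1\in\Syl_2(\mathsf{S}_U)$, $z\in P_2\in\Syl_2(\mathsf{S}_{\overline U})$. Since distinct block structures give distinct Sylow subgroups, $gPg^{-1}=P$ forces $g(B)=B$, so $\mathbf{N}_{\mathsf{S}_n}(P)$ preserves $B$ and hence $U$, whence $\mathbf{N}_{\mathsf{S}_n}(P)=\mathbf{N}_{\mathsf{S}_U}(P_1)\times\mathbf{N}_{\mathsf{S}_{\overline U}}(P_2)$; a second use of Proposition \ref{MalleSub} on each factor gives $\mathbf{N}_{\mathsf{S}_n}(P)\le\Sub_{\mathsf{S}_k}(y)\times\Sub_{\mathsf{S}_{n-k}}(z)$. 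This yields the claimed decomposition, and in particular $\Sub_{\mathsf{S}_n}(x)\le\mathsf{S}_k\times\mathsf{S}_{n-k}<\mathsf{S}_n$.

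For the converse I would assume $2^{m_t}\le\sum_{i=1}^{t-1}\ell_i2^{m_i}$ for all $t\ge2$ and prove $\Sub_{\mathsf{S}_n}(x)=\mathsf{S}_n$ by induction on $r$; the base $r=1$ is Lemma \ref{Reduction1}(ii), as $n$ not a $2$-power forces $x$ to be a product of $\ell_1\ge3$ equal cycles. For the step, peel off the longest cycles: write $x=yz$ with $y$ the $\ell_r$ cycles of length $2^{m_r}$, $\Omega=\Sup(y)$, so $\Sub_{\mathsf{S}_\Omega}(y)\times\Sub_{\mathsf{S}_{\overline\Omega}}(z)\le\Sub_{\mathsf{S}_n}(x)$. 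Here $\Sub_{\mathsf{S}_\Omega}(y)=\mathsf{S}_\Omega$ if $\ell_r>1$ (Lemma \ref{Reduction1}(ii)) and is a Sylow $2$-subgroup $Q$ of $\mathsf{S}_\Omega$ if $\ell_r=1$; while $\Sub_{\mathsf{S}_{\overline\Omega}}(z)=\mathsf{S}_{\overline\Omega}$ either by the inductive hypothesis (when $|\overline\Omega|$ is not a $2$-power, since $z$ then has $r-1$ cycle lengths and inherits the gap hypothesis) or by Theorem \ref{Subnormalizers} (when $|\overline\Omega|=2^h$: the bounds $2^{m_r}\le\sum_{i<r}\ell_i2^{m_i}=2^h$ and $o(z)=2^{m_{r-1}}$ keep $z$ out of the two exceptional families), the only genuinely exceptional configuration being a small one built around $n=3\cdot2^h$. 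Away from that configuration one obtains $\mathsf{S}_\Omega\times\mathsf{S}_{\overline\Omega}$ (respectively $Q\times\mathsf{S}_{\overline\Omega}$) inside $\Sub_{\mathsf{S}_n}(x)$, whose unique maximal overgroup is $\mathsf{S}_\Omega\times\mathsf{S}_{\overline\Omega}$; one escapes this maximal subgroup by producing an element of $\Sub_{\mathsf{S}_n}(x)$ moving a point of $\Omega$ outside $\Omega$ — obtained, when $\ell_r>1$, by re-splitting $x$ so that one length-$2^{m_r}$ cycle crosses the divide, and when $\ell_r=1$, from a Sylow $2$-subgroup containing $x$ whose block structure has a top-level block of size $2^{n_f}$ strictly containing $\Omega$. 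The existence of the latter is exactly where the hypothesis $2^{m_r}\le\sum_{i<r}\ell_i2^{m_i}$ is used: it makes $n_f>m_r$ and lets the smaller cycles be packed so as to fill the remaining blocks. In the leftover configuration $n=3\cdot2^h$ one instead places three distinct Sylow-type subgroups of $\mathsf{S}_n$ inside $\Sub_{\mathsf{S}_n}(x)$ — from the two groupings of the two length-$2^h$ cycles, plus $\mathsf{S}_\Omega\times Q$ — whose join is $\mathsf{S}_n$.

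The converse implication is the main obstacle. The generic mechanism above is clean, but the exceptional cases of Theorem \ref{Subnormalizers} — where the complementary factor $z$ ends up in a symmetric group of $2$-power degree with order exactly half of it — must be treated individually, the worst being $n=3\cdot2^h$, where no reduction is possible and one must join three Sylow-type subgroups. Equally delicate is the combinatorial step of producing a Sylow $2$-subgroup containing $x$ whose block structure nests $\Omega$ inside a strictly larger block, which has to be argued from the gap hypothesis on the smaller cycles; and in each branch one must check that the subgroups placed inside $\Sub_{\mathsf{S}_n}(x)$ do not all lie in a single maximal subgroup of $\mathsf{S}_n$ (transitivity of $\mathsf{S}_{\overline\Omega}$, or of a Sylow subgroup on its support, is the key input there). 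Aligning the base cases of the two nested inductions — on $r$ here and on $k$ in Theorem \ref{Subnormalizers} — is a final piece of routine bookkeeping.
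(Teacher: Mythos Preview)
Your proposal is correct and follows essentially the same route as the paper: the forward direction is the paper's block-structure argument recast through Proposition~\ref{MalleSub} (the normalizer of a Sylow being exactly the stabiliser of its block structure makes the two phrasings equivalent), and your converse is the same induction on $r$ with the same $\ell_r=1$ versus $\ell_r>1$ split, the same appeal to Lemma~\ref{Reduction1} and Theorem~\ref{Subnormalizers} on the complementary factor, and the same identification of the residual $n=3\cdot 2^{h}$ configuration handled by joining three Sylow-type subgroups. One small point to tighten: your assertion $|\overline U|=n-k$ in the forward direction silently uses that $k=2^{n_s}+\cdots+2^{n_f}$ (equivalently, that $n-k<2^{m}$ forces $n-k$ to be the low part of the $2$-adic expansion of $n$), which the paper states explicitly; make that step visible.
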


Now, we are ready to prove Theorem \ref{thmB}.

\begin{proof}[Proof of Theorem \ref{thmB}]
Let $n=2^{n_1}+\ldots+2^{n_f}$ with $n_f>n_{f-1}>\ldots>n_1\geq 0$. We prove the result by induction on $f$. If $f=1$, then $n=2^k$ and the result follows from Theorem \ref{TheoremB}.

 Let us assume that $f\geq 2$ and that the result holds for every $k<f$.   Let  $x \in \mathsf{S}_n$ be a  $2$-element. If $\Sub_{\mathsf{S}_n}(x)=\mathsf{S}_n$, the result follows trivially. Thus, we may assume that $\Sub_{\mathsf{S}_n}(x)<\mathsf{S}_n$. From Theorem \ref{Reduction2} we deduce that there exists $t\geq 2$ such that $2^{m_t}>\sum_{i=1}^{t-1}\ell_i2^{m_i}$. Let us write $k=2^{m_r}+\ldots+2^{m_t}$ and  $h=n-k$.  Let us assume that $n_{s-1}<m_t\leq n_s$. Our hypotheses imply that $k=2^{m_r}+\ldots+2^{m_t}=2^{n_f}+\ldots+2^{n_s}$ and $h=n-k=2^{n_{s-1}}+\ldots+2^{n_1}$. We notice that the number of summands appearing in the $2$-adic expansion of both $k$ and $h$ is smaller than $f$.

Let  $y\in \mathsf{S}_k$ be the product of all cycles of length at least $2^{m_t}$ and let  $z\in \mathsf{S}_h$ be the product of all cycles of length smaller than $2^{m_t}$. By inductive hypothesis, we have that there exist a bijection
$$\Phi: \Irr^{y}(\mathsf{S}_{k}) \rightarrow \Irr^{y}(\Sub_{\mathsf{S}_{k}}(y))$$
satisfying the following properties:

\begin{itemize}
\item [(I)] $\Phi(\psi)(1)_2=\psi(1)_2$ for every $\psi \in \Irr^{y}(\mathsf{S}_{k}) $.

\item[(II)] $\Phi(\psi)(y)=\pm \psi(y)$  for every $\psi \in \Irr^{y}(\mathsf{S}_{k}) $.

\end{itemize}

Moreover, there exists a bijection
$$\Delta: \Irr^z(\mathsf{S}_{h})\rightarrow \Irr^{z}(\Sub_{\mathsf{S}_{h}}(z))$$
satisfying the following properties: 
\begin{itemize}
\item [(I)] $\Delta(\phi)(1)_2=\phi(1)_2$ for every $\phi \in \Irr^{z}(\mathsf{S}_{h}) $.

\item[(II)] $\Delta(\phi)(z)=\pm \phi(z)$  for every $\phi \in \Irr^{z}(\mathsf{S}_{h}) $.

\end{itemize}

Now, let $\lambda$ be a partition of $n$ and let $\mu=C_{2^{m_t}}(\lambda)$. We notice that  if $\chi^\lambda(x)\neq0$  then $|\mu|=h$ and $\chi^{\mu}(z)\neq0 $. Let $T_C(\lambda)=\{\lambda_{ij}\mid0\leq i\leq n_f, 0\leq j \leq 2^i-1\}$ be the $2$-core tower of $\lambda$. Given $0\leq i \leq n_f$ and $ 0\leq j \leq 2^i-1$ we define $\mu_{ij}=\lambda_{ij}$ for $i<s$ and $0$ otherwise. We notice that $T_C(\mu)=\{\mu_{ij}\mid0\leq i\leq n_f, 0\leq j \leq 2^i-1\}$.  Analogously, we define $\gamma_{ij}=\lambda_{ij}$ for $j\geq s$ and $0$ otherwise. We consider $\gamma$ as the unique partition of $\ell 2^m$ such that $T_C(\gamma)=\{\gamma_{ij}\mid0\leq i\leq n_f, 0\leq j \leq 2^i-1\}$. We notice that
$$\chi^{\lambda}(x)=\chi^{\gamma}(y)\chi^{\mu}(z)$$
and 
$$\chi^{\lambda}(1)_2=\chi^{\gamma}(1)_2\chi^{\mu}(1)_2.$$

By Theorem \ref{Reduction2} we have that $\Sub_{\mathsf{S}_n}(x)=\Sub_{\mathsf{S}_{k}}(y)\times \Sub_{\mathsf{S}_{n-k}}(z)$.  Now, we define
$$\Gamma:\Irr^x(\mathsf{S}_n)\rightarrow \Irr^x(\Sub_{\mathsf{S}_n}(x))$$
 by
$$\Gamma(\chi^{\lambda})=\Phi(\chi^{\gamma})\times \Delta(\chi^{\mu}).$$
Since $\Phi$ and $\Delta$ satisfy conditions (I) and (II), we deduce that 
$$\Gamma(\chi^{\lambda})(1)_2=\Phi(\chi^{\gamma})(1)_2\Delta(\chi^{\mu})(1)_2= \chi^{\gamma}(1)_2\chi^{\mu}(1)_2=\chi^{\lambda}(1)_2$$
and 
$$\Gamma(\chi^{\lambda})(x)=\Phi(\chi^{\gamma})(y)\Delta(\chi^{\mu})(z)=\pm \chi^{\gamma}(y)\chi^{\mu}(z)=\pm\chi^{\lambda}(x).$$
Thus, $\Gamma$ is a bijection satisfying the desired properties.
\end{proof}

\section{Picky conjecture}\label{PickyPart}

The goal of this section is to prove Theorem \ref{thmA}.  We begin by studying the case of picky $2$-elements of Type II. From Theorem \ref{thmB}, we deduce that the Picky Conjecture holds for these elements. However, we will calculate the exact values of $\chi(x)$, where $x\in \mathsf{S}_{2^k}$ is a picky $2$-element of Type II.

\begin{cor}\label{CasePower2}
Let $k\geq 3$ and let $g \in \mathsf{S}_{2^k}$ be a $2^k$-cycle. Let $x \in \mathsf{S}_{2^k}$ be a picky $2$-element of Type II and let $P_{2^k}\in \Syl_2(\mathsf{S}_{2^k})$ be the unique Sylow $2$-subgroup of $\mathsf{S}_{2^k}$ containing $x$. Then there exists  a map 
$$\Gamma:\Irr^x(\mathsf{S}_{2^k})\rightarrow \Irr^x(P_{2^k}) $$
satisfying the following properties:
\begin{itemize}
\item [(I)] $\Gamma(\chi)(1)_2=\chi(1)_2$ for every $\chi\in \Irr^x(\mathsf{S}_{2^k})$.

\item[(II)] $\Gamma(\chi)(x)=\pm \chi(x)$ for every $\chi\in \Irr^x(\mathsf{S}_{2^k})$.

\item[(III)] For each $\chi \in \Irr_{2'}(\mathsf{S}_{2^k})$ there exists $\varepsilon \in \{1,-1\}$ (depending on $\chi$) such that  $\Gamma(\chi)(x)=\varepsilon\chi(x) $ and $\Gamma(\chi)(g)=\varepsilon\chi(g) $.
\end{itemize}
Moreover, for $H\in \{\mathsf{S}_{2^k},P_{2^k}\}$ the following hold:
\begin{itemize}

\item[a)]If  $\chi \in \Irr_{2'}(H)$, then $\chi(x)=\pm1$.

\item[b)] If  $\chi \in \Irr^x(H) \setminus\Irr_{2'}(H)$, then $\chi(x)=\pm2$.
\end{itemize}
\begin{proof}
Since $\Sub_{\mathsf{S}_{2^k}}(x)=\mathbf{N}_{\mathsf{S}_{2^k}}(P_{2^k})=P_{2^k}$, the existence of the bijection $\Gamma$ follows by Theorem \ref{TheoremB}. We notice that, since $\Gamma$ satisfies (II) it is enough to prove that statements a) and b) hold for $\mathsf{S}_{2^k}$. Now, we prove that statements a) and b) hold by induction on $k$.

Assume first that $k=3$. In this case $x=(1,2,3,4)(5,6)$ and $\Irr^x(\mathsf{S}_{8})=\Irr_{2'}(\mathsf{S}_{8})\cup \{\chi^{(4,2,1,1)},\chi^{(3,3,2)}\}$.  The result holds by checking that $\chi^{(4,2,1,1)}(x)=2$,   $\chi^{(3,3,2)}(x)=-2$ and that $\chi(x)=\pm1$ for every $\chi \in \Irr_{2'}(\mathsf{S}_8)$.

Assume that the result holds for $k-1$ and we prove the result for $k$.   Let us write $x=zy$,where $z$ is a $2^{k-1}$-cycle and $y$ is a picky $2$-element of Type II in $\mathsf{S}_{2^{k-1}}$.  We follow the constructions and the notation introduced in Theorem \ref{TheoremB}. 

\underline{Case $\chi \in \Irr_{2'}(\mathsf{S}_{2^k})$:} We recall that given $0 \leq m \leq 2^{k-1}-1$, the characters $\chi^{m,1}$ and $\chi^{m,-1}$ are the characters of $\mathsf{S}_{2^k}$ associated to the hooks $\tau(m,k)=(2^{k}-m,1^{m})$ and $\tau(2^{k-1}+m,k)=(2^{k-1}-m,1^{2^{k-1}+m})$, where $\chi^m$ is the character of $\mathsf{S}_{2^{k-1}}$ associated to $\tau(m,k-1)=(2^{k-1}-m, 1^m)$. Then $$\Irr_{2'}(\mathsf{S}_{2^k})=\{\chi^{m,i}\mid0 \leq m \leq 2^{k-1}-1, i \in \{1,-1\}\}.$$ For  any $0 \leq m \leq 2^{k-1}-1$, we have that $\chi^{m,1}(x)=\chi^m(y)$ and $\chi^{m,-1}(x)=(-1)\chi^m(y)$, by the Murnaghan--Nakayama rule. By the inductive hypothesis, we have that $\chi^m(y) =\pm1$ and hence $\chi^{m,1}(x),\chi^{m,-1}(x)=\pm1$. Thus, a) holds.

\underline{Case $\chi^{\lambda}\not \in \Irr_{2'}(\mathsf{S}_{2^k})$ and  $\lambda$ has a unique $2^{k-1}$-hook:} By Lemma \ref{ext}, $\lambda=\lambda(\tau,\mu)$, where $\tau$  is a hook of length $2^{k-1}$ and  $\mu$ is a partition of $2^{k-1}$ which is not a hook.  By (\ref{eq:1}), we have that $\chi^{\lambda(\tau,\mu)}(x)=(-1)^{ht(\tau)}\chi^{\mu}(y)$. By the inductive hypothesis $\chi^{\mu}(y)=\pm2$ and hence $\chi^{\lambda(\tau,\mu)}(x)=\pm2$.

\underline{Case $\chi^{\lambda} \in \Irr^{x}(\mathsf{S}_{2^k})$ and  $\lambda$ has two $2^{k-1}$-hooks:} In this case $\lambda=\gamma(a,b,k)$, where $\gamma(a,b,k)=(2^{k-1}-a,2^{k-1}-b+1,2^a,1^{b-a-1})$ for $0\leq a<b\leq 2^{k-1}-1$. By (\ref{eq:2}), we have
$$\chi^{\lambda}(x)=\chi^{a+1}(z)\chi^{b}(y)+\chi^{b-1}(z)\chi^{a}(y).$$
Since $z$ is a $2^{k-1}$-cycle, we have that $\chi^{a+1}(z), \chi^{b-1}(z)=\pm1$. Moreover, by inductive hypothesis $\chi^{a}(y),\chi^{a}(y)=\pm 1$. It follows that $\chi^{\lambda}(x)\in\{-2,0,2\}$. Since $\chi^{\lambda}\in \Irr^x(\mathsf{S}_{2^k})$, we deduce that $\chi^{\lambda}(x)=\pm2$.
\end{proof}
\end{cor}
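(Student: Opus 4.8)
The plan is to obtain the bijection $\Gamma$ directly from Theorem~\ref{TheoremB} and then read off the exact character values in a) and b) by an induction on $k$ that reuses the case analysis already carried out inside the proof of Theorem~\ref{TheoremB}. Since $x$ is picky, Corollary~\ref{PickySubnormal} gives $\Sub_{\mathsf{S}_{2^k}}(x)=\mathbf{N}_{\mathsf{S}_{2^k}}(P_{2^k})$, and for $p=2$ one has $\mathbf{N}_{\mathsf{S}_n}(P_n)=P_n$, so $\Sub_{\mathsf{S}_{2^k}}(x)=P_{2^k}$. Applying Theorem~\ref{TheoremB} to this $x$ therefore produces a bijection $\Gamma\colon\Irr^x(\mathsf{S}_{2^k})\to\Irr^x(P_{2^k})$ satisfying (I), (II), (III). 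By (I) this $\Gamma$ restricts to a bijection $\Irr_{2'}(\mathsf{S}_{2^k})\to\Irr_{2'}(P_{2^k})$ (the two sets both have size $2^k$ by Theorem~\ref{2PrimeDegree} and the remarks in Subsection~\ref{SubsectCentralizers}), hence also to a bijection between the complements of these sets inside $\Irr^x$; combined with (II) this reduces a) and b) for $H=P_{2^k}$ to the corresponding statements for $H=\mathsf{S}_{2^k}$.

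To prove a) and b) in $\mathsf{S}_{2^k}$ I would induct on $k\ge 3$. The base case $k=3$ is a direct check in $\mathsf{S}_8$: up to conjugacy $x=(1,2,3,4)(5,6)$, one has $\Irr^x(\mathsf{S}_8)=\Irr_{2'}(\mathsf{S}_8)\cup\{\chi^{(4,2,1,1)},\chi^{(3,3,2)}\}$, and (as in the base cases of Theorems~\ref{Subnormalizers} and~\ref{TheoremB}, using GAP) $\chi^{(4,2,1,1)}(x)=2$, $\chi^{(3,3,2)}(x)=-2$, while $\chi(x)=\pm1$ for every $\chi\in\Irr_{2'}(\mathsf{S}_8)$. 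For $k\ge4$, write $x=zy$ with $z$ a $2^{k-1}$-cycle and $y$ a picky $2$-element of Type~II in $\mathsf{S}_{2^{k-1}}$ acting on the remaining $2^{k-1}$ points, which is the structure of $\Sub_{\mathsf{S}_{2^k}}(x)$ forced by Theorem~\ref{Subnormalizers}, and use the proof of Theorem~\ref{TheoremB}, which partitions $\Irr^x(\mathsf{S}_{2^k})$ into three families. \underline{Case $\chi\in\Irr_{2'}(\mathsf{S}_{2^k})$:} these are the $\chi^{m,1},\chi^{m,-1}$ attached to the hooks $\tau(m,k)$ and $\tau(2^{k-1}+m,k)$, and the Murnaghan--Nakayama rule applied to the cycle $z$ gives $\chi^{m,1}(x)=\chi^m(y)$, $\chi^{m,-1}(x)=-\chi^m(y)$ with $\chi^m=\chi^{\tau(m,k-1)}\in\Irr_{2'}(\mathsf{S}_{2^{k-1}})$, so $\chi(x)=\pm1$ by the inductive hypothesis a); this proves a). \underline{Case $\chi^{\lambda}\notin\Irr_{2'}(\mathsf{S}_{2^k})$ with a unique $2^{k-1}$-hook:} by Lemma~\ref{ext}, $\lambda=\lambda(\tau,\mu)$ with $\mu$ a non-hook partition of $2^{k-1}$, and by $(\ref{eq:1})$, $\chi^{\lambda}(x)=\chi^{\tau}(z)\chi^{\mu}(y)$, where $\chi^{\tau}(z)=\pm1$ by Lemma~\ref{TypeI}; since $\chi^{\lambda}\in\Irr^x$ forces $\chi^{\mu}\in\Irr^y(\mathsf{S}_{2^{k-1}})\setminus\Irr_{2'}(\mathsf{S}_{2^{k-1}})$, the inductive hypothesis b) gives $\chi^{\mu}(y)=\pm2$ and hence $\chi^{\lambda}(x)=\pm2$. \underline{Case $\lambda=\gamma(a,b,k)$ has two $2^{k-1}$-hooks:} by $(\ref{eq:2})$, $\chi^{\lambda}(x)=\chi^{a+1}(z)\chi^{b}(y)+\chi^{b-1}(z)\chi^{a}(y)$, a sum of two terms each equal to $\pm1$ (the $z$-values by Lemma~\ref{TypeI}, the $y$-values by the inductive hypothesis a)), so $\chi^{\lambda}(x)\in\{-2,0,2\}$, and it is nonzero because $\chi^{\lambda}\in\Irr^x$, whence $\chi^{\lambda}(x)=\pm2$. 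The last two cases exhaust $\Irr^x(\mathsf{S}_{2^k})\setminus\Irr_{2'}(\mathsf{S}_{2^k})$, so b) follows and the induction closes.

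There is essentially no obstacle once Theorem~\ref{TheoremB} is in hand: properties (I), (II), (III) of $\Gamma$ are inherited verbatim, and the statements a) and b) are a bookkeeping refinement that extracts the exact signs and absolute values from the Murnaghan--Nakayama identities $(\ref{eq:1})$ and $(\ref{eq:2})$ that already occur in that proof, together with the values $\pm1$ taken by the hook characters $\chi^{\tau(m,k-1)}$ at the $2^{k-1}$-cycle $z$ (Lemma~\ref{TypeI}). The only point that deserves a moment's care is the two-hook case, where a priori the sum of two $\pm1$-terms could cancel to $0$; this possibility is excluded precisely because we have restricted attention to $\Irr^x$.
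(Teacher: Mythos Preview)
Your proposal is correct and follows essentially the same approach as the paper: obtain $\Gamma$ from Theorem~\ref{TheoremB} via $\Sub_{\mathsf{S}_{2^k}}(x)=P_{2^k}$, reduce a) and b) to $H=\mathsf{S}_{2^k}$ using (I) and (II), and then induct on $k$ with the same three-case analysis based on the Murnaghan--Nakayama identities $(\ref{eq:1})$ and $(\ref{eq:2})$. Your treatment of the reduction step is in fact slightly more explicit than the paper's, since you spell out that (I) forces $\Gamma$ to restrict to bijections on $\Irr_{2'}$ and its complement before invoking (II).
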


Before continuing, we make a brief discussion which was suggested  to the author  by Alexander Moretó.  Huppert \cite{Huppert} showed that, for $k \geq 3$ and $P_{2^k}\in \Syl_2(\mathsf{S}_{2^k})$ we have
$$\{\psi(1)\mid\psi\in \Irr(P_{2^k})\}=\{2^j \mid 0\leq j \leq 2^{k-2}+2^{k-3}-1\}.$$
In contrast,  the set $\{\chi(1)_2\mid\chi\in \Irr(\mathsf{S}_{2^k})\}$ has ``gaps''. For example,
$$\{\chi(1)_2\mid\chi\in \Irr(\mathsf{S}_{8})\}=\{1,2,4,8,64\}.$$

The following corollary shows that, for $x\in\mathsf{S}_{2^k}$ a picky element of Type II, the set  $\{\chi(1)_2\mid\chi\in \Irr^x(\mathsf{S}_{2^k})\}$ has a better behaviour.

\begin{cor}\label{GoodBehaviour}
Let $k\geq 3$ and let  $x \in \mathsf{S}_{2^k}$ be a picky element of Type II.  We have that  $$\{\chi(1)_2\mid\chi\in \Irr^x(\mathsf{S}_{2^k})\}=\{1,2,\ldots,2^{k-2}\}.$$
In addition, 
$$|\{\chi\in \Irr^x(\mathsf{S}_{2^k})\mid\chi(1)_2=2^{k-2}\}|= 2^{k-2}.$$
\end{cor}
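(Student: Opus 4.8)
The plan is to prove both assertions by induction on $k$, running the induction simultaneously with the structural bookkeeping already set up in Corollary~\ref{CasePower2}. For the base case $k=3$ one simply inspects the list $\Irr^x(\mathsf{S}_8)=\Irr_{2'}(\mathsf{S}_8)\cup\{\chi^{(4,2,1,1)},\chi^{(3,3,2)}\}$ computed there: the $2'$-characters contribute $2$-parts in $\{1,2,4\}=\{1,2,2^{3-2}\cdot?\}$... more carefully, $\{\chi(1)_2\mid\chi\in\Irr^x(\mathsf{S}_8)\}=\{1,2\}$ (since $k-2=1$), and the count of characters with $\chi(1)_2=2^{k-2}=2$ is $2$, namely the two extra characters above. (One should double-check the $2'$-degrees: among $\Irr_{2'}(\mathsf{S}_8)$ the $2$-parts realised are all equal to $1$, consistent with Lemma~\ref{TypeI}-style reasoning.) So the base case is a finite check, and I would cite the GAP computation or do it by hand via the hook length formula.

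For the inductive step, write $x=zy$ with $z$ a $2^{k-1}$-cycle and $y\in\mathsf{S}_{2^{k-1}}$ a picky element of Type~II, exactly as in the proof of Corollary~\ref{CasePower2}. The characters in $\Irr^x(\mathsf{S}_{2^k})$ split into three families according to how many $2^{k-1}$-hooks the partition $\lambda$ has (zero is impossible for a character not vanishing at $x$, since $x$ has a $2^{k-1}$-cycle): those with a single $2^{k-1}$-hook whose complement is a hook (these are exactly the $2'$-characters, by Corollary~\ref{hooks} and the analysis in Theorem~\ref{TheoremB}), those with a single $2^{k-1}$-hook whose complement $\mu$ is not a hook (via Lemma~\ref{ext}), and those with two $2^{k-1}$-hooks (via Lemma~\ref{Doublehook}, of the form $\gamma(a,b,k)$). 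Now I transport degrees:
\begin{itemize}
\item[(1)] For $\lambda=\tau(m,k)$ or $\tau(2^{k-1}+m,k)$ a $2^k$-hook: $\chi^{\lambda}(1)_2=1$ by Corollary~\ref{hooks}, and such $\lambda$ survives in $\Irr^x$ by Corollary~\ref{CasePower2}a). This gives the degree value $1$.
\item[(2)] For $\lambda=\lambda(\tau,\mu)$ with $\mu$ a non-hook partition of $2^{k-1}$ not vanishing at $y$: Lemma~\ref{ext} gives $\chi^{\lambda}(1)_2=2\cdot\chi^{\mu}(1)_2$, and $\chi^{\lambda}(x)=\pm\chi^{\mu}(y)\neq 0$ by (\ref{eq:1}). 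By the inductive hypothesis applied to $y\in\mathsf{S}_{2^{k-1}}$, the values $\chi^{\mu}(1)_2$ range exactly over $\{1,2,\ldots,2^{k-3}\}$, so $2\chi^{\mu}(1)_2$ ranges over $\{2,4,\ldots,2^{k-2}\}$.
\item[(3)] For $\lambda=\gamma(a,b,k)$ not vanishing at $x$: $\chi^{\lambda}(1)_2=2$ by Lemma~\ref{Doublehook}. This gives degree value $2$, already covered.
\end{itemize}
Combining (1)--(3) yields $\{\chi(1)_2\mid\chi\in\Irr^x(\mathsf{S}_{2^k})\}=\{1\}\cup\{2,4,\ldots,2^{k-2}\}=\{1,2,\ldots,2^{k-2}\}$, which is the first assertion.

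For the second assertion, I count the $\lambda$ with $\chi^{\lambda}(1)_2=2^{k-2}$ among those not vanishing at $x$. Family (1) contributes $2$-part $1$ and family (3) contributes $2$-part $2=2^{k-2}$ only when $k=3$, so for $k\geq 4$ the top degree $2^{k-2}>2$ comes entirely from family (2), i.e.\ from pairs $(\tau,\mu)$ with $\chi^{\mu}(1)_2=2^{k-3}$ and $\chi^{\mu}(y)\neq 0$. By Lemma~\ref{ext} the map $(\tau,\mu)\mapsto\lambda(\tau,\mu)$ is injective, and for fixed $\mu$ the number of $2^k$-hooks $\tau$ available is... here I need to be careful: a $2^{k-1}$-hook $\tau$ (these are the $\tau(j,k-1)$, $0\leq j\leq 2^{k-1}-1$) but restricted to those that can be adjoined — actually in the setting of Lemma~\ref{ext}, $\mu$ is a partition of $2^{k-1}$ and $\tau$ is a $2^{k-1}$-hook, and each choice of the parameter $j\in\{0,\ldots,2^{k-1}-1\}$ gives a distinct $\lambda$. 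Wait — that would overcount. The correct statement (to be extracted from the $2$-core tower construction in the proof of Lemma~\ref{ext}) is that the number of valid hooks $\tau$ is exactly $2$ — corresponding to the two ways of prescribing $\tau_{k-1,j_0}=(1)$ compatibly, or rather this should match the factor $2$ appearing in $\chi^{\lambda}(1)_2=2\chi^{\mu}(1)_2$ being ``new.'' The cleanest route: by the inductive hypothesis, $|\{\mu\vdash 2^{k-1}\mid \chi^{\mu}\in\Irr^y(\mathsf{S}_{2^{k-1}}),\ \chi^{\mu}(1)_2=2^{k-3}\}|=2^{k-3}$; and for each such $\mu$, the number of $2^{k-1}$-hooks $\tau$ with $\lambda(\tau,\mu)\in\Irr^x(\mathsf{S}_{2^k})$ is $2$ (the two hooks $\tau$ with $\chi^{\tau}(z)\neq 0$ of the appropriate parity, as in the Murnaghan--Nakayama bookkeeping in Theorem~\ref{TheoremB}). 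Hence the count is $2\cdot 2^{k-3}=2^{k-2}$, as required. For $k=3$ the count is $2$ directly from the base case.

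The main obstacle I anticipate is pinning down precisely the combinatorial multiplicity in step~(2) — namely, for a fixed non-hook $\mu\vdash 2^{k-1}$ with $\chi^{\mu}(y)\neq 0$, exactly how many $2^{k-1}$-hooks $\tau$ give a partition $\lambda(\tau,\mu)$ with $\chi^{\lambda(\tau,\mu)}(x)\neq 0$. This requires a careful Murnaghan--Nakayama sign/parity analysis (the same one flagged parenthetically in the proof of Theorem~\ref{TheoremB}: one must split according to $y\in A_{2^{k-1}}$ or not and according to the parity of the hook height), to confirm the multiplicity is $2$ rather than some other value. Everything else is routine bookkeeping with the $2$-core tower and the degree formulas from Lemmas~\ref{ext} and~\ref{Doublehook}.
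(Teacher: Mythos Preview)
Your overall strategy coincides with the paper's: induction on $k$ with the same three-family decomposition of $\Irr^x(\mathsf{S}_{2^k})$ according to the number of $2^{k-1}$-hooks of $\lambda$. But there are two concrete gaps in the execution.

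First, in your item~(2) you assert that for non-hook $\mu\vdash 2^{k-1}$ with $\chi^\mu(y)\neq 0$ the values $\chi^\mu(1)_2$ range over $\{1,2,\ldots,2^{k-3}\}$, so that family~(2) produces all of $\{2,4,\ldots,2^{k-2}\}$. This is off: if $\chi^\mu(1)_2=1$ then by Corollary~\ref{hooks} $\mu$ is a $2^{k-1}$-hook, contradicting the non-hook hypothesis built into family~(2). Hence family~(2) only yields $\{4,\ldots,2^{k-2}\}$. To realise the value $2$ you genuinely need family~(3) to be nonempty inside $\Irr^x$, i.e.\ some $\gamma(a,b,k)$ with $\chi^{\gamma(a,b,k)}(x)\neq 0$; your ``already covered'' does not address this. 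The paper does this with the separate Lemma~\ref{linear}, which guarantees a $b$ with $\chi^b(y)\chi^b(z)>0$, and then $\gamma(0,b,k)$ works by formula~(\ref{eq:2}).

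Second, and more seriously, your count for the ``in addition'' part hinges on the claim that for each fixed non-hook $\mu$ with $\chi^\mu(y)\neq 0$, exactly two $2^{k-1}$-hooks $\tau$ satisfy $\chi^{\lambda(\tau,\mu)}(x)\neq 0$. This is false: by equation~(\ref{eq:1}) one has $\chi^{\lambda(\tau,\mu)}(x)=(-1)^{h(\tau)}\chi^\mu(y)$, which is nonzero for \emph{every} hook $\tau$, i.e.\ for all $2^{k-1}$ of them. The Murnaghan--Nakayama sign affects only the sign of the value, never whether it vanishes, so there is no parity obstruction of the sort you anticipate. The paper accordingly writes the recursion with factor $2^{k-1}$, not $2$. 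You rightly identified this multiplicity as the crux; the guessed value $2$ is simply wrong.
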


Before proving Corollary \ref{GoodBehaviour}, we need to prove the following easy result.

\begin{lem}\label{linear}
Let $k\geq 3$, let $x,y\in \mathsf{S}_{2^k}$ be picky elements of Types I and II,  respectively, and let   $(a,b)\in \{(1,1),(1,-1),(-1,1),(-1,-1)\}$. Then
$$|\{\chi \in \Irr_{2'}(\mathsf{S}_{2^k})\mid(\chi(x), \chi(y))=(a,b)\}|=2^{k-2}.$$
\begin{proof}
 For $k=2$, the result is trivial. Assume that the result holds for $k-1$.  Let us write $y=y_1y_2$, where $y_1$ is a $2^{k-1}$-cycle and $y_2$ is a picky element of Type II in $\mathsf{S}_{2^{k-1}}$. 
Applying the Murnaghan--Nakayama rule, we have that 
$$(\chi^{m,1}(x),\chi^{m,1}(y))=(\chi^{m}(y_1), \chi^{m}(y_2) )$$
and that
$$(\chi^{m,-1}(x),\chi^{m,-1}(y))=(\chi^{m}(y_1), (-1) \chi^{m,-1}(y_2) ).$$
Thus, we have that the set $\{\chi \in \Irr_{2'}(\mathsf{S}_{2^k})\mid(\chi(x), \chi(y))=(a,b)\}$ coincides with 
$$\{\chi^{m,1}\mid(\chi^{m}(y_1), \chi^{m}(y_2) )=(a,b)\}\cup\{\chi^{m,-1}\mid(\chi^{m}(y_1), \chi^{m,-1}(y_2) )=(a,-b)\}.$$
Now, by inductive hypothesis we have that 
$$|\{\chi^{m,1}\mid(\chi^{m}(y_1), \chi^{m}(y_2) )=(a,b)\}|=|\{\chi^{m,-1}\mid(\chi^{m}(y_1), \chi^{m,-1}(y_2) )=(a,-b)\}|=2^{k-3}$$
and the result follows.
\end{proof}
\end{lem}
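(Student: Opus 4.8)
The plan is to prove the statement by induction on $k$, using the Murnaghan--Nakayama rule to pull the pair of values $(\chi(x),\chi(y))$ for characters of $\mathsf{S}_{2^k}$ back to analogous pairs for characters of $\mathsf{S}_{2^{k-1}}$. Recall from Corollary \ref{hooks} that $\Irr_{2'}(\mathsf{S}_{2^k})$ consists exactly of the $2^k$ hook characters $\chi^{\tau(j,k)}$ with $\tau(j,k)=(2^k-j,1^j)$; I will index them as $\chi^{m,1}$ (hooks with leg $m$) and $\chi^{m,-1}$ (hooks with leg $2^{k-1}+m$) for $0\le m\le 2^{k-1}-1$, as in Theorem \ref{TheoremB}. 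By Lemma \ref{TypeI} and Corollary \ref{CasePower2} we have $\chi(x),\chi(y)\in\{\pm1\}$ for every $\chi\in\Irr_{2'}(\mathsf{S}_{2^k})$, so the four sign pairs genuinely partition $\Irr_{2'}(\mathsf{S}_{2^k})$, whose cardinality is $2^k=4\cdot 2^{k-2}$; the claim asserts that this partition is balanced. Since $x$ is a $2^k$-cycle (Type I) and $y$ is Type II, I first decompose $y=y_1y_2$ with $y_1$ a $2^{k-1}$-cycle and $y_2\in\mathsf{S}_{2^{k-1}}$ carrying the remaining cycles and fixed points; then $y_1$ is a Type I element and $y_2$ a Type II element of $\mathsf{S}_{2^{k-1}}$, so the induction hypothesis applies to the pair $(y_1,y_2)$.

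The heart of the argument is a Murnaghan--Nakayama computation. The key observation is that for any hook of $2^k$ there is a \emph{unique} removable rim hook of length $2^{k-1}$: if the leg length is at most $2^{k-1}-1$ it is the horizontal strip at the end of the first row (height $0$), and otherwise it is the vertical strip at the bottom of the first column (height $2^{k-1}-1$); in either case the result is again a hook, namely $\tau(m,k-1)=(2^{k-1}-m,1^m)$. Combining this with the fact that removing the full $2^k$-rim hook from $\tau(j,k)$ (evaluation at the $2^k$-cycle $x$) contributes the sign $(-1)^j$, and using that $2^{k-1}$ is even so that $(-1)^{2^{k-1}}$ is harmless, I expect the relations
\begin{align*}
(\chi^{m,1}(x),\chi^{m,1}(y))&=(\chi^m(y_1),\chi^m(y_2)),\\
(\chi^{m,-1}(x),\chi^{m,-1}(y))&=(\chi^m(y_1),-\chi^m(y_2)),
\end{align*}
where $\chi^m=\chi^{\tau(m,k-1)}\in\Irr_{2'}(\mathsf{S}_{2^{k-1}})$. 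The sign flip in the second coordinate of the second line is exactly the height $2^{k-1}-1$ of the vertical rim hook; the fixed points of $y_2$ are absorbed into the evaluation $\chi^m(y_2)$ inside $\mathsf{S}_{2^{k-1}}$.

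Granting these relations, the fibre of $(\chi(x),\chi(y))$ over $(a,b)$ in $\mathsf{S}_{2^k}$ splits as the disjoint union of $\{\chi^{m,1}:(\chi^m(y_1),\chi^m(y_2))=(a,b)\}$ and $\{\chi^{m,-1}:(\chi^m(y_1),\chi^m(y_2))=(a,-b)\}$. Applying the induction hypothesis in $\mathsf{S}_{2^{k-1}}$ twice, once with the pair $(a,b)$ and once with $(a,-b)$, each of these sets has cardinality $2^{(k-1)-2}=2^{k-3}$, so the fibre has size $2^{k-3}+2^{k-3}=2^{k-2}$, as required. For the base case I would verify the statement directly in $\mathsf{S}_4$ (that is, at $k=2$, where $2^{k-2}=1$): the four odd-degree hook characters realise the four sign pairs exactly once each, reading the ``Type II'' element as the degenerate element of cycle type $(2,1,1)$, since genuine Type II elements require $n\ge 6$.

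The main obstacle is the bookkeeping in the Murnaghan--Nakayama step: one must confirm the uniqueness of the removable $2^{k-1}$-rim hook for every hook and pin down its height (hence sign) in the two regimes of small and large leg, treating the cases $y\in A_{2^{k-1}}$ versus $y\notin A_{2^{k-1}}$ and $m$ even versus odd with appropriate care. A secondary subtlety is at $k=3$, where $y_2$ lands in $\mathsf{S}_4$ and is the degenerate Type II element of the base case rather than a genuine Type II element; this is precisely what forces the induction to be based at $k=2$ instead of $k=3$.
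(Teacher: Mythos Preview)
Your proposal is correct and follows essentially the same approach as the paper's proof: induction on $k$ with base case $k=2$, decomposition $y=y_1y_2$, the Murnaghan--Nakayama relations $(\chi^{m,\pm1}(x),\chi^{m,\pm1}(y))=(\chi^m(y_1),\pm\chi^m(y_2))$, and the split into two fibres of size $2^{k-3}$. Your write-up is in fact more careful than the paper's in spelling out the uniqueness and height of the removable $2^{k-1}$-rim hook and in flagging the degenerate nature of the Type~II element at $k=2$; the case distinction on $y\in A_{2^{k-1}}$ and the parity of $m$ that you mention as a potential obstacle is not actually needed here, since the two displayed relations hold uniformly.
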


\begin{proof}[Proof of Corollary \ref{GoodBehaviour}]
We prove the result by induction on $k$.  If $k=3$, then the results follows by an easy  inspection.

Let $x\in \mathsf{S}_{2^k} $  be a picky element of Type II. Let $x=z\cdot y$, where $z$ is a $2^{k-1}$-cycle and $y\in \mathsf{S}_{2^{k-1}}$ be a picky element of Type II. By inductive hypothesis, we have that 
$$\{\chi(1)_2\mid\chi\in \Irr^y(\mathsf{S}_{2^{k-1}})\}=\{1,2,\ldots,2^{k-3}\}.$$
Let $0\leq a\leq 2^{k-2}$. Our goal is to prove that there exists $\chi \in \Irr(\mathsf{S}_{2^k})$ such that $\chi(x)\neq 0$ and $\chi(1)_2=2^a$. We divide the proof in different subcases.

\underline{Case $a=0$:} Since $1_{\mathsf{S}_{2^k}}\in \Irr^x(\mathsf{S}_{2^k})$, the result holds in this case.

\underline{Case $a=1$:} By Lemma \ref{linear}, there exists $0<b\leq 2^{k-1}-1$ such that $\chi^b(y)\chi^b(z)>0$. Let us consider the partition $\lambda=\gamma(0,b,k)=(2^{k-1},2^{k-1}-b+1,1^{b-1})$. We notice that $\chi^{\lambda}(1)_2=2$ by Lemma \ref{Doublehook}. Moreover, by  (\ref{eq:2}), we have that
$$\chi^{\lambda}(x)=\chi^{1}(z)\chi^{b}(y)+\chi^{b-1}(z)\chi^{0}(y)=(-1)(\chi^{b}(y)+\chi^{b}(z))\neq 0$$
and the result holds in this case.

\underline{Case $2\leq a\leq k-2$:} From Lemma \ref{ext2} and the Murnaghan--Nakayama rule, we deduce that $\chi^{\lambda}\in \Irr^x(\mathsf{S}_{2^k})$ and $\chi^{\lambda}(1)_2=2^a$ if and only if $\lambda=\lambda(\tau,\mu)$, where $\tau$ is a  $2^{k-1}$-hook, $\chi^{\mu}\in \Irr^y(\mathsf{S}_{2^{k-1}})$ and $\chi^{\mu}(1)_2= 2^{a-1}$. By inductive hypothesis, we have that there exists a partition $\mu$ of $2^{k-1}$ such that $\chi^{\mu}\in \Irr^y(\mathsf{S}_{2^{k-1}})$ and $\chi^{\mu}(1)_2= 2^{a-1}$.  Thus, the result holds in this case.

Since there exist $2^{k-1}$ different hooks of length $2^{k-1}$, then using the above argument, we deduce
$$|\{\chi\in \Irr^x(\mathsf{S}_{2^k})\mid \chi(1)_2=2^{k-2}\}|=2^{k-1}|\{\psi\in \Irr^y(\mathsf{S}_{2^{k-1}})\mid \psi(1)_2=2^{k-3}\}|.$$
Thus, the ``in addition'' part follows by an easy inductive argument.
\end{proof}

Now, we study the case when $p=2$ and $n$ is any even number with $n \equiv 0\pmod 8$.

\begin{thm}\label{LastCase}
Let $n$ be an even integer with $n \equiv 0\pmod 8$. Let $x \in \mathsf{S}_{2^k}$ be a picky $2$-element of Type II and let $P_{n}\in \Syl_2(\mathsf{S}_{n})$ be the unique Sylow $2$-subgroup containing $x$. Then there exists  a map 
$$\Gamma:\Irr^x(\mathsf{S}_{n})\rightarrow \Irr^x(P_n) $$
satisfying the following properties.
\begin{itemize}
\item [(I)] $\Gamma(\chi)(1)_2=\chi(1)_2$ for every $\chi\in \Irr^x(\mathsf{S}_{n})$.

\item[(II)]  $\Gamma(\chi)(x)=\pm \chi(x)$ for every $\chi\in \Irr^x(\mathsf{S}_{n})$.
\end{itemize}
Moreover, for $H\in \{\mathsf{S}_{n},P_{n}\}$ we have 
\begin{itemize}

\item[a)]If  $\chi \in \Irr_{2'}(H)$, then $\chi(x)=\pm1$.

\item[b)] If  $\chi \in \Irr^x(H) \setminus\Irr_{2'}(H)$, then $\chi(x)=\pm2$.
\end{itemize}
\begin{proof}
 Let $n=\sum_{i}^f 2^{n_i}$ be the $2$-adic expansion of $n$. Since $\Sub_{\mathsf{S}_{n}}(x)=\mathbf{N}_{\mathsf{S}_{n}}(P_{n})=P_{n}$, the existence of the bijection follows by Theorem \ref{thmB}. Now, we prove that statements a) and b) hold by induction on $f$.

If $f=1$, then $n=2^k$ for $k \geq 3$ and hence the result follows by Corollary \ref{CasePower2}.

Let us assume that the result holds for $f-1$. Let us write $m=n-2^{n_f}$, $k=n_f$ and $x=zy$ where $z\in \mathsf{S}_{2^k}$ is a $2^{k}$-cycle and $y \in \mathsf{S}_{m}$ is a picky $2$-element of Type II.

Let $\lambda$ be a partition of $n$. If $\lambda$ does not possess any $2^{k}$-hook, then $\chi^{\lambda}(x)=0$ by the Murnaghan--Nakayama rule. It follows that $\chi^{\lambda}\in \Irr^x(\mathsf{S}_n)$ if and only if $\lambda$ possesses a (unique) $2^{k}$-hook, say $\tau$, such that $\chi^{\lambda\setminus \tau}\in \Irr^y(\mathsf{S}_{m})$.

Let $\mu$ be a partition of $m$ such that $\chi^{\mu}\in \Irr^y(\mathsf{S}_{m})$. Applying Lemma \ref{ext2}, we have that, for any $2^{k}$-hook, say $\tau$, there exists a unique partition $\lambda(\tau, \mu)$ of $n$ such that $\tau$ is a $2^{k}$-hook of $\lambda(\tau, \mu)$ and $\lambda(\tau, \mu)\setminus \tau=\mu$. By the previous discussion, if $\chi^{\lambda}\in \Irr^x(\mathsf{S}_n)$, then $\lambda=\lambda(\tau,\mu)$ for a $2^k$-hook $\tau$ and $\chi^{\mu}\in \Irr^y(\mathsf{S}_{m})$. By Lemma \ref{ext2}, we also have that 
$$\chi^{\lambda(\tau,\mu)}(1)_2=\chi^{\mu}(1)_2.$$
In addition, applying the Murnaghan--Nakayama rule, we deduce that
$$\chi^{\lambda(\tau,\mu)}(x)=(-1)^{ht(\tau)}\chi^{\mu}(y).$$

If $\chi^{\mu}\in \Irr_{2'}(\mathsf{S}_{m})$, then $\chi^{\mu}(y)=\pm1$ by induction. Thus, $\chi^{\lambda(\tau,\mu)}\in \Irr_{2'}(\mathsf{S}_{m})$ and $\chi^{\lambda(\tau,\mu)}(x)=\pm1$. Analogously, if $\chi^{\mu}\in \Irr^y(\mathsf{S}_{m})\setminus \Irr_{2'}(\mathsf{S}_{m})$, then $\chi^{\lambda(\tau,\mu)}\in \Irr^x(\mathsf{S}_n)\setminus \Irr_{2'}(\mathsf{S}_n)$ and $\chi^{\lambda(\tau,\mu)}(x)=\pm2$. It follows that the assertions a) and b) hold for $\mathsf{S}_n$. 
\end{proof}
\end{thm}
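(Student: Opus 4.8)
The plan is to extract the bijection directly from Theorem~\ref{thmB} and then establish the value statements a) and b) by induction on the number $f$ of terms in the $2$-adic expansion of $n$, reducing at each step to a strictly smaller symmetric group. Since $x$ is picky and $p=2$, we have $\Sub_{\mathsf{S}_n}(x)=\mathbf{N}_{\mathsf{S}_n}(P_n)=P_n$, so Theorem~\ref{thmB} already supplies a bijection $\Gamma\colon\Irr^x(\mathsf{S}_n)\to\Irr^x(P_n)$ satisfying (I) and (II); nothing further is needed for the existence part. For a) and b), I would first observe that because $\Gamma$ satisfies (I) it carries $\Irr_{2'}(\mathsf{S}_n)$ onto $\Irr_{2'}(P_n)$ (these being exactly the degree-$2'$ characters inside the respective $\Irr^x$), and because it satisfies (II) the claims for $H=P_n$ follow character by character from those for $H=\mathsf{S}_n$. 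So it suffices to prove a) and b) for $\mathsf{S}_n$.

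The base case $f=1$ is $n=2^k$ with $k\geq 3$ (since $8\mid n$), which is precisely Corollary~\ref{CasePower2}. For the inductive step, write $n=\sum_{i=1}^f 2^{n_i}$ with $n_1<\dots<n_f$, put $k=n_f$ and $m=n-2^k=\sum_{i=1}^{f-1}2^{n_i}$. Then $m$ is again divisible by $8$ (each $n_i\geq 3$), has $f-1$ terms in its $2$-adic expansion, and satisfies $8\leq m<2^k$, so the inductive hypothesis applies to $\mathsf{S}_m$. A Type II picky element of $\mathsf{S}_n$ factors as $x=zy$ where $z$ is a $2^k$-cycle and $y\in\mathsf{S}_m$ is a Type II picky element: indeed $x$ is a $2$-adic element of $n-2$ together with two fixed points, and deleting the (unique) $2^k$-cycle leaves a $2$-adic element of $m-2$ together with two fixed points.

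The core of the argument is the transfer of character values from $\mathsf{S}_n$ to $\mathsf{S}_m$. Since $m<2^k$ (equivalently $n<2^{k+1}$), no partition of $n$ admits more than one $2^k$-hook (removing a $2^k$-hook amounts to deleting a box from level $k$ of the $2$-core tower, and there are $b_k\leq n/2^k<2$ such boxes), so by the Murnaghan--Nakayama rule $\chi^\lambda(x)=0$ unless $\lambda$ has a single $2^k$-hook $\tau$, in which case $\chi^\lambda(x)=(-1)^{h(\tau)}\chi^{\lambda\setminus\tau}(y)$ with $\lambda\setminus\tau$ a partition of $m$. By Lemma~\ref{ext2} the assignment $(\tau,\mu)\mapsto\lambda(\tau,\mu)$ is a bijection between pairs consisting of a $2^k$-hook $\tau$ and a partition $\mu$ of $m$ and the partitions of $n$ admitting a $2^k$-hook, and moreover $\chi^{\lambda(\tau,\mu)}(1)_2=\chi^\mu(1)_2$. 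Consequently $\chi^\lambda\in\Irr^x(\mathsf{S}_n)$ precisely when $\lambda=\lambda(\tau,\mu)$ with $\chi^\mu\in\Irr^y(\mathsf{S}_m)$; for such $\lambda$ one has $\chi^\lambda(x)=\pm\chi^\mu(y)$ and $\chi^\lambda\in\Irr_{2'}(\mathsf{S}_n)\iff\chi^\mu\in\Irr_{2'}(\mathsf{S}_m)$. Invoking a) and b) for $\mathsf{S}_m$ then yields $\chi^\lambda(x)=\pm1$ when $\chi^\lambda\in\Irr_{2'}(\mathsf{S}_n)$ and $\chi^\lambda(x)=\pm2$ when $\chi^\lambda\in\Irr^x(\mathsf{S}_n)\setminus\Irr_{2'}(\mathsf{S}_n)$, completing the induction.

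I expect the only genuine difficulty to be bookkeeping: checking that the factorization $x=zy$ has the asserted cycle type, that the parameters stay in the range where both the inductive hypothesis ($8\mid m$) and the hook-removal construction underlying Lemma~\ref{ext2} ($m<2^k$, which also forces uniqueness of the $2^k$-hook) apply, and that Lemma~\ref{ext2} really does produce a bijection onto the set of partitions of $n$ with a $2^k$-hook. No idea beyond Corollary~\ref{CasePower2} and this hook analysis seems to be required.
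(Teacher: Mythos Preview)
Your proposal is correct and follows essentially the same line as the paper's proof: obtain the bijection from Theorem~\ref{thmB} via pickyness, then establish a) and b) for $\mathsf{S}_n$ by induction on the number of terms in the $2$-adic expansion, using the factorization $x=zy$, the Murnaghan--Nakayama rule, and Lemma~\ref{ext2}. Your additional remarks---that a) and b) for $P_n$ follow from those for $\mathsf{S}_n$ through $\Gamma$, why $8\mid m$, and why the $2^k$-hook is unique---are helpful clarifications that the paper leaves implicit.
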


Now, we work towards proving Theorem \ref{thmA} for $p$-adic elements. For this discussion we will assume that $p\neq 2$ (notice that the $2$-adic case follows from Lemma \ref{TypeI}). Assume  first that $n=ap^k$ with $a \leq p-1$.  By Corollary \ref{hooks} we have that  $|\Irr_{p'}(\mathbf{N}_{\mathsf{S}_{p^k}}(P_{p^k}))|=p^k$. Assume that $$\Irr_{p'}(\mathbf{N}_{\mathsf{S}_{p^k}}(P_{p^k}))=\{\chi_0,\chi_1,\ldots, \chi_{p^k-1}\}$$ is  a fixed labelling for the characters in $\Irr_{p'}(\mathbf{N}_{\mathsf{S}_{p^k}}(P_{p^k}))$.

 Let $\lambda$ be a partition of $n=ap^k$ with $\chi^{\lambda}\in \Irr_{p'}(\mathsf{S}_n)$. Then all entries of $T_C(\lambda)$ are $(0)$ except in the $k$-th row. Thus, $\lambda$ is determined by a set of partitions  $\{\lambda_i\mid 0 \leq i \leq p^k-1\}$ satisfying that $\sum_{i=0}^{p^k-1}|\lambda_i|=a$.  Now, we  work to define a character $\Phi^{\lambda}\in  \Irr_{p'}(\mathbf{N}_{\mathsf{S}_n}(P_n))$ from the partitions $\lambda_i$ with $0\leq i\leq p^k-1$.

We know that $\mathbf{N}_{\mathsf{S}_n}(P_n)=\mathbf{N}_{\mathsf{S}_{p^k}}(P_{p^k}) \wr \mathsf{S}_a$. Let $\phi_0,\ldots,\phi_{a} \in \Irr_{p'}(\mathbf{N}_{\mathsf{S}_{p^k}}(P_{p^k}))$, such that $H_i=\{j \in \{1,\ldots a\}\mid  \phi_j=\chi_i \}$ satisfies that $|H_i|=|\lambda_i|$ for each $0\leq i\leq p^k-1$. Let $L$ be the inertia group of $\phi_1\times\ldots\times \phi_a$ in $\mathbf{N}_{\mathsf{S}_n}(P_n)$. By Theorem  \ref{Mattarei}, we have that
$$L\cong \mathbf{N}_{\mathsf{S}_{p^k}}(P_{p^k})^a\rtimes(\Sym(H_0)\times \ldots \times \Sym(H_{p^k-1}))$$
and that $\phi_1 \times\ldots\times\phi_a$ extends to an irreducible  character $\Psi$ of $L$. By Gallagher's Theorem (see Corollary 6.17 of \cite{Isaacscar}), we have that $\Psi(\chi^{\lambda_0}\times \ldots \times \chi^{\lambda_{p^k-1}})\in \Irr(L)$. Now, we define  $\Phi^{\lambda}$ as $(\Psi(\chi^{\lambda_0}\times \ldots \times \chi^{\lambda_{p^k-1}}))^{\mathbf{N}_{\mathsf{S}_n}(P_n)}$. We know that $\Phi^{\lambda}\in \Irr(\mathbf{N}_{\mathsf{S}_n}(P_n))$ by Clifford correspondence (see Theorem 6.11 of \cite{Isaacscar}) and, clearly, $p$ does not divide $\Phi^{\lambda}(1)$. The map taking $\chi^{\lambda}$ to $\Phi^{\lambda}$ is a bijection from $\Irr_{p'}(\mathsf{S}_{ap^k})$ to $\Irr_{p'}(\mathbf{N}_{\mathsf{S}_{ap^k}}(P_{ap^k}))$.

\begin{thm}\label{SmallCase}

Let $p>2$ be a prime, let $0<a<p$ and  $k \geq 1$ be integers and let $x\in \mathsf{S}_{ap^k}$ be a $p$-adic element. Let  $\{\lambda_i\mid0 \leq i \leq p^k-1\}$ be a set of partitions satisfying that $\sum_{i=0}^{p^k-1}|\lambda_i|=a$ and let $\lambda$ be the unique partition of $ap^k$ such that the $k$-th row of $T_C(\lambda)$ is $\{\lambda_i\mid 0 \leq i \leq p^k-1\}$.  Then 
$$\chi^{\lambda}(x)=\pm\Phi^{\lambda}(x)=\pm\frac{a!}{\prod_{i=0}^{p^k-1}|\lambda_i|!}\prod_{i=0}^{p^k-1}\chi^{\lambda_i}(1)$$
As a consequence, the map $\Phi:\Irr_{p'}(\mathsf{S}_{ap^k})\rightarrow \Irr_{p'}(\mathbf{N}_{\mathsf{S}_{ap^k}}(P_{ap^k}))$  defined by $\Phi(\chi^{\lambda})=\Phi^{\lambda}$ is a bijection satisfying $\Phi(\chi)(x)=\pm\chi(x)$.
\begin{proof}
We begin by proving that $\Phi^{\lambda}(x)=\pm\frac{a!}{\prod_{i=0}^{p^k-1}|\lambda_i|!}(\prod_{i=0}^{p^k-1}\chi^{\lambda_i}(1))$. Let $\theta:=\phi_1\times\ldots\times\phi_a\in \Irr_{p'}(\mathbf{N}_{\mathsf{S}_{p^k}}(P_{p^k}))^a$, where the $\phi_i$ are as above. Clearly, $\theta$ is a component of the restriction of $\Phi^{\lambda}$ to $\mathbf{N}_{\mathsf{S}_{p^k}}(P_{p^k})^a$. Let $\theta_1,\ldots,\theta_t$ be the $\mathbf{N}_{\mathsf{S}_n}(P_n)$-conjugates of $\theta$. We know that $x\in \mathbf{N}_{\mathsf{S}_{p^k}}(P_{p^k})^a$ and hence $$\Phi^{\lambda}(x)=(\prod_{i=0}^{p^k-1}\chi^{\lambda_i}(1))(\sum_{i=1}^t\theta_i(x)),$$ by Clifford's Theorem (see Theorem 6.2 of \cite{Isaacscar}).

 For each $0\leq i\leq p^{k}-1$, $\theta=\phi_1\times\ldots\times\phi_a$ contains $|\lambda_i|$ copies of $\chi_i$. Thus, the number of $\mathbf{N}_{\mathsf{S}_n}(P_n)$-conjugates of $\theta$ is $a!/(\prod_{i=0}^{p^k-1}|\lambda_i|!)$, that is $t=a!/(\prod_{i=0}^{p^k-1}|\lambda_i|!)$. Moreover, for any $1\leq j \leq t$, we have that $\theta_j(x)=\theta(x)$ since the components of both $\theta$ and $\theta_j$ are $\phi_1,\ldots,\phi_a$, up to rearrangement. Now,  if we write $x$ as $x_1\cdots x_a$, where each $x_i$ is a $p^k$-cycle, then  $\theta(x)=\prod_{j=1}^a\phi_j(x_j)=\varepsilon \in \{1,-1\}$, by Lemma \ref{Casepk}. Therefore,
$$\sum_{i=1}^t\theta_i(x)=\pm t=\pm\frac{a!}{\prod_{i=0}^{p^k-1}|\lambda_i|!}$$
and the result follows.

The fact that  $\chi^{\lambda}(x)=\pm\frac{a!}{\prod_{i=0}^{p^k-1}|\lambda_i|!}\prod_{i=0}^{p^k-1}\chi^{\lambda_i}(1)$ is 2.7.32 of \cite{JK}.
\end{proof}
\end{thm}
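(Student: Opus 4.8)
The plan is to compute $\Phi^{\lambda}(x)$ directly from its construction as an induced character, by Clifford theory, and then to identify the answer with the value of $\chi^{\lambda}(x)$ supplied by the classical formula 2.7.32 of \cite{JK}.

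Since $\Phi^{\lambda}$ is a class function on $\mathbf{N}_{\mathsf{S}_{ap^k}}(P_{ap^k})$ and, by Lemma \ref{Facil}, all $p$-adic elements lying in $P_{ap^k}$ are $\mathbf{N}_{\mathsf{S}_{ap^k}}(P_{ap^k})$-conjugate, I may evaluate $\Phi^{\lambda}$ at a convenient $p$-adic element; likewise $\chi^{\lambda}(x)$ depends only on the cycle type of $x$. Writing $\mathbf{N}_{\mathsf{S}_{ap^k}}(P_{ap^k}) = \mathbf{N}_{\mathsf{S}_{p^k}}(P_{p^k})\wr\mathsf{S}_a$ with base group $B := \mathbf{N}_{\mathsf{S}_{p^k}}(P_{p^k})^a$, I take $x = (c,\ldots,c) \in B$, where $c$ is a fixed $p^k$-cycle of $\mathsf{S}_{p^k}$. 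Recall that $\Phi^{\lambda}$ is induced from the character $\Psi\cdot(\chi^{\lambda_0}\times\cdots\times\chi^{\lambda_{p^k-1}})$ of the inertia group $L$ of $\theta := \phi_1\times\cdots\times\phi_a$ in $\mathbf{N}_{\mathsf{S}_{ap^k}}(P_{ap^k})$, where $\Psi$ extends $\theta$, $\theta \in \Irr_{p'}(\mathbf{N}_{\mathsf{S}_{p^k}}(P_{p^k}))^a$, and $\chi_i$ occurs exactly $|\lambda_i|$ times among the $\phi_j$. Since the Gallagher factor $\chi^{\lambda_0}\times\cdots\times\chi^{\lambda_{p^k-1}}$ is inflated from $L/B$, restricting $\Psi\cdot(\chi^{\lambda_0}\times\cdots\times\chi^{\lambda_{p^k-1}})$ to $B$ yields $(\prod_{i}\chi^{\lambda_i}(1))\,\theta$; and as $B$ is normal in $\mathbf{N}_{\mathsf{S}_{ap^k}}(P_{ap^k})$, inducing up and restricting back to $B$ gives $\Phi^{\lambda}|_B = (\prod_{i}\chi^{\lambda_i}(1))\sum_{j=1}^t\theta_j$, where $\theta_1,\ldots,\theta_t$ are the distinct $\mathbf{N}_{\mathsf{S}_{ap^k}}(P_{ap^k})$-conjugates of $\theta$.

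It then remains to track the combinatorics. First, $t = [\mathbf{N}_{\mathsf{S}_{ap^k}}(P_{ap^k}):L] = a!/\prod_{i=0}^{p^k-1}|\lambda_i|!$, the number of arrangements of the multiset $\{\phi_1,\ldots,\phi_a\}$ over the $a$ coordinates. Second, each $\theta_j$ has the same multiset of coordinate characters as $\theta$, so $\theta_j(x) = \prod_{i=1}^a\phi_i(c) = \theta(x)$ for all $j$. Third, by Lemma \ref{Casepk} each $\phi_i(c) = \pm1$, since $\phi_i \in \Irr_{p'}(\mathbf{N}_{\mathsf{S}_{p^k}}(P_{p^k}))$ and $c$ is a $p^k$-cycle; hence $\theta(x) = \pm1$ and $\Phi^{\lambda}(x) = \pm t\prod_{i}\chi^{\lambda_i}(1) = \pm\frac{a!}{\prod_{i}|\lambda_i|!}\prod_{i}\chi^{\lambda_i}(1)$. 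Comparing this with 2.7.32 of \cite{JK} --- which expresses $\chi^{\lambda}(x)$ on a $p$-adic class through the $p$-quotient of $\lambda$, namely the partitions $\lambda_i$ in the $k$-th row of $T_C(\lambda)$ --- gives $\chi^{\lambda}(x) = \pm\frac{a!}{\prod_{i}|\lambda_i|!}\prod_{i}\chi^{\lambda_i}(1)$ as well, which is the displayed identity. The final clause follows at once, since $\Phi$ was already exhibited as a bijection $\Irr_{p'}(\mathsf{S}_{ap^k})\to\Irr_{p'}(\mathbf{N}_{\mathsf{S}_{ap^k}}(P_{ap^k}))$ in the discussion preceding the statement.

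The main obstacle is the Clifford-theoretic bookkeeping of the second paragraph: choosing $x$ so that it lies in the base group, confirming that the Gallagher twist accounts for exactly the multiplicity $\prod_{i}\chi^{\lambda_i}(1)$, computing the number of conjugates as $a!/\prod_{i}|\lambda_i|!$, and verifying that they all agree at $x$. After that, Lemma \ref{Casepk} and the reference 2.7.32 of \cite{JK} close the argument with no further effort.
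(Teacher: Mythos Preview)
Your proposal is correct and follows essentially the same approach as the paper: both compute $\Phi^{\lambda}(x)$ via Clifford theory on the base group $B=\mathbf{N}_{\mathsf{S}_{p^k}}(P_{p^k})^a$, identifying the multiplicity as $\prod_i\chi^{\lambda_i}(1)$, the number of conjugates as $a!/\prod_i|\lambda_i|!$, and each $\theta_j(x)=\pm1$ via Lemma~\ref{Casepk}, and both cite 2.7.32 of \cite{JK} for $\chi^{\lambda}(x)$. Your only addition is the explicit choice $x=(c,\ldots,c)$ justified by Lemma~\ref{Facil}, which makes the equality $\theta_j(x)=\theta(x)$ slightly more transparent but is otherwise cosmetic.
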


Let $n=\sum_{i=0}^{k} a_ip^i$ and let $\{\lambda_{ij}\mid0\leq i\leq k, 0\leq j \leq p^i-1\}$ be a set of partitions satisfying that $\sum_{j=0}^{p^i-1}|\lambda_{ij}|=a_i$ for every $0\leq i \leq k$. Let $\lambda$ be the unique partition of $n$ such that $T_C(\lambda)=\{\lambda_{ij}\mid0\leq i\leq k, 0\leq j \leq p^i-1\}$.

For each $t \in\{0,\ldots ,k\}$, we define $\mu_{ij}^{t}$ as $\lambda_{ij}$ if $i=t$ and $0$ otherwise. We also define $\mu_t$ as the unique $p'$-partition of $a_tp^t$ such that $T_C(\mu_t)=\{\mu_{ij}^t\mid0\leq i\leq k, 0\leq j \leq p^i-1\}$. In particular,  $\Phi^{\mu_t}\in \Irr_{p'}(\mathbf{N}_{\mathsf{S}_{a_tp^t}}(P_{a_tp^t}))$. Now, we define 
$$\Phi^{\lambda}=\Phi^{\mu_k}\times\ldots\times \Phi^{\mu_0}\in \Irr_{p'}(\mathbf{N}_{\mathsf{S}_n}(P_n)).$$
We know that a similar correspondence can be found in \cite{Fong}.

\begin{thm}\label{OddCase}
Let $p>2$ be a prime, let $n=\sum_{i=0}^{k} a_ip^i$ with $0\leq a_i\leq p-1$, $a_k\not=0$ and let $x=x_1\cdots x_k$, where $x_i$ is the product of $a_i$ cycles of length $p^i$. 
The map $$\Phi: \Irr_{p'}(\mathsf{S}_n)\rightarrow \Irr_{p'}(\mathbf{N}_{\mathsf{S}_n}(P_n))$$ defined by $\Phi(\chi^{\lambda})=\Phi^{\lambda}$ is a bijection satisfying that
$$\chi^{\lambda}(x)=\pm\Phi(\chi^{\lambda})(x)=\pm\frac{\prod_{t}a_t!\prod_{t,j}\chi^{\lambda_{tj}}(1)}{\prod_{t,j}|\lambda_{tj}|!}.$$
In particular,  $\Irr_{p'}(\mathsf{S}_n)=\Irr^x(\mathsf{S}_n)$ and $\Irr_{p'}(\mathbf{N}_{\mathsf{S}_n}(P_n))=\Irr^x(\mathbf{N}_{\mathsf{S}_n}(P_n))$.
\begin{proof}
We proceed by induction on $k$. For $k=0$, the result is trivial. Assume that the result holds for $k-1$. Let $y=x_1\ldots x_{k-1}$ and let $\gamma=C_{p^k}(\lambda)$. Applying Theorem 2.7.27 of \cite{JK}, we deduce that 
$$\chi^{\lambda}(x)=\pm\chi^{\mu_k}(x_k)\chi^{\gamma}(y).$$
Now, we have that $\gamma$ is a $p'$-partition of $n-a_kp^k$ satisfying that $\gamma_{ij}=\lambda_{ij}$ for every $0\leq i\leq k-1$ and every $0 \leq j \leq p^i-1$. By induction, we have that 
$$\chi^{\gamma}(y)=\pm\prod_{t=0}^{k-1}\chi^{\mu_t}(x_t).$$
We deduce that 
$$\chi^{\lambda}(x)=\pm\prod_{t=0}^k\chi^{\mu_t}(x_t).$$
Moreover, by Theorem \ref{SmallCase}, we have that 
$$\chi^{\mu_t}(x_t)=\pm \Phi^{\mu_t}(x_t) = \pm\frac{a_t!}{\prod_{j=0}^{p^t-1}|\lambda_{tj}|!}\prod_{j=0}^{p^t-1}\chi^{\lambda_{tj}}(1)$$
for each $t \in \{0,\ldots, k\}$. Therefore, the first part follows.

Now, we prove the second part. The fact that $\Irr_{p'}(\mathsf{S}_n)=\Irr^x(\mathsf{S}_n)$ is Proposition \ref{GLLV}. It only remains to prove  that $\Irr^x(\mathbf{N}_{\mathsf{S}_n}(P_n)))=\Irr_{p'}(\mathbf{N}_{\mathsf{S}_n}(P_n)))$. Since $\chi^{\lambda}(x)=\pm\Phi(\chi^{\lambda})(x)$ for every $\chi \in \Irr_{p'}(\mathsf{S}_n)$ and $\mathbf{C}_{\mathsf{S}_n}(x)\subseteq \mathbf{N}_{\mathsf{S}_n}(P_n)$,   we have that 
$$|\mathbf{C}_{\mathbf{N}_{\mathsf{S}_n}(P_n)}(x)|= \sum_{\psi\in \Irr^x(\mathbf{N}_{\mathsf{S}_n}(P_n))}|\psi(x)|^2\geq \sum_{\psi\in \Irr_{p'}(\mathbf{N}_{\mathsf{S}_n}(P_n))}|\psi(x)|^2=\sum_{\chi\in \Irr_{p'}(\mathsf{S}_n)}|\chi(x)|^2=$$
$$\sum_{\chi\in \Irr^x(\mathsf{S}_n)}|\chi(x)|^2= |\mathbf{C}_{\mathsf{S}_n}(x)|=|\mathbf{C}_{\mathbf{N}_{\mathsf{S}_n}(P_n)}(x)|,$$
which implies that $\Irr^x(\mathbf{N}_{\mathsf{S}_n}(P_n)))=\Irr_{p'}(\mathbf{N}_{\mathsf{S}_n}(P_n)))$.
\end{proof}
\end{thm}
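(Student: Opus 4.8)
The plan is to induct on $k$, stripping off the $a_k$ cycles of length $p^k$ together with the top row of the $p$-core tower of $\lambda$, and reducing the resulting one-row situation to Theorem \ref{SmallCase}. The base case $k=0$ is immediate: there $n=a_0<p$, the element $x$ is trivial, $P_n$ is trivial, $\mathbf N_{\mathsf S_n}(P_n)=\mathsf S_n$, and $\Irr_{p'}=\Irr=\Irr^x$ on both sides, with $\Phi$ essentially the identity and the formula reducing to $\chi^\lambda(1)=\chi^\lambda(1)$.

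For the inductive step, set $y=x_1\cdots x_{k-1}$ and $\gamma=C_{p^k}(\lambda)$. Since $\chi^\lambda\in\Irr_{p'}(\mathsf S_n)$ forces $\sum_j|\lambda_{kj}|=a_k$, the partition $\gamma$ is a $p'$-partition of $n-a_kp^k$ whose $p$-core tower agrees with that of $\lambda$ in rows $0,\dots,k-1$ and is zero in row $k$; in particular $\gamma_{ij}=\lambda_{ij}$ for $i\le k-1$. First I would apply Theorem 2.7.27 of \cite{JK} to the $a_k$ cycles of length $p^k$ occurring in $x$ to obtain $\chi^\lambda(x)=\pm\chi^{\mu_k}(x_k)\,\chi^\gamma(y)$, where $\mu_k$ is the $p'$-partition of $a_kp^k$ supported on row $k$ of $T_C(\lambda)$. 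The inductive hypothesis applied in $\mathsf S_{n-a_kp^k}$ gives $\chi^\gamma(y)=\pm\prod_{t=0}^{k-1}\chi^{\mu_t}(x_t)$, whence $\chi^\lambda(x)=\pm\prod_{t=0}^{k}\chi^{\mu_t}(x_t)$. Then I would invoke Theorem \ref{SmallCase} on each factor $\mathsf S_{a_tp^t}$, which both evaluates $\chi^{\mu_t}(x_t)=\pm\frac{a_t!}{\prod_j|\lambda_{tj}|!}\prod_j\chi^{\lambda_{tj}}(1)$ and identifies it with $\pm\Phi^{\mu_t}(x_t)$. Multiplying the factors yields the closed formula for $\chi^\lambda(x)$, and since $\Phi^\lambda=\Phi^{\mu_k}\times\cdots\times\Phi^{\mu_0}$ while $x$ lies in the corresponding internal direct product $\prod_t\big(\mathbf N_{\mathsf S_{p^t}}(P_{p^t})\wr\mathsf S_{a_t}\big)$ inside $\mathbf N_{\mathsf S_n}(P_n)$, it also gives $\Phi^\lambda(x)=\prod_t\Phi^{\mu_t}(x_t)=\pm\chi^\lambda(x)$.

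Bijectivity of $\Phi$ is then formal: $\mathbf N_{\mathsf S_n}(P_n)=\prod_t\big(\mathbf N_{\mathsf S_{p^t}}(P_{p^t})\wr\mathsf S_{a_t}\big)$, its irreducible (resp. $p'$-degree irreducible) characters are the outer tensor products of irreducible (resp. $p'$-degree irreducible) characters of the factors, and under these identifications $\Phi$ is the product of the bijections $\Irr_{p'}(\mathsf S_{a_tp^t})\to\Irr_{p'}(\mathbf N_{\mathsf S_{a_tp^t}}(P_{a_tp^t}))$ constructed just before Theorem \ref{SmallCase}. For the ``in particular'' clause, $\Irr_{p'}(\mathsf S_n)=\Irr^x(\mathsf S_n)$ is Proposition \ref{GLLV}; for the normalizer I would use that $x$ is $p$-adic, hence picky by Proposition \ref{ClassificationPickySymmetric}, so $\mathbf C_{\mathsf S_n}(x)=\mathbf C_{\mathbf N_{\mathsf S_n}(P_n)}(x)$ by Lemma \ref{Centralizer}, and then combine $|\chi(x)|=|\Phi(\chi)(x)|$ with the second orthogonality relation in $\mathsf S_n$ and in $\mathbf N_{\mathsf S_n}(P_n)$ (together with Corollary 4.20 of \cite{N}, which puts $\Irr_{p'}(\mathbf N_{\mathsf S_n}(P_n))\subseteq\Irr^x(\mathbf N_{\mathsf S_n}(P_n))$) to get a chain of inequalities pinned between $|\mathbf C_{\mathbf N_{\mathsf S_n}(P_n)}(x)|$ and itself, forcing $\Irr^x(\mathbf N_{\mathsf S_n}(P_n))=\Irr_{p'}(\mathbf N_{\mathsf S_n}(P_n))$.

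The hard part will be matching the hook-removal bookkeeping of Theorem 2.7.27 of \cite{JK} with the $p$-core-tower combinatorics: one must check that removing all $p^k$-hooks of $\lambda$ exactly zeroes out row $k$ of $T_C(\lambda)$ while leaving rows $0,\dots,k-1$ untouched, that the row-$k$ partition $\mu_k$ extracted this way is precisely the $p'$-partition of $a_kp^k$ to which Theorem \ref{SmallCase} applies, and that the ambient embedding $x=x_k\cdots x_0$ stays compatible with the product structure of $\mathbf N_{\mathsf S_n}(P_n)$ at each stage of the induction. The sign indeterminacies are harmless, since every identity in sight is asserted only up to $\pm1$.
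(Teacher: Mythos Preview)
Your proposal is correct and follows essentially the same route as the paper: induction on $k$ via Theorem~2.7.27 of \cite{JK} to split off $\chi^{\mu_k}(x_k)$, the inductive hypothesis on $\gamma=C_{p^k}(\lambda)$, Theorem~\ref{SmallCase} on each factor, and then the second orthogonality relation combined with $\mathbf C_{\mathsf S_n}(x)\subseteq\mathbf N_{\mathsf S_n}(P_n)$ to pin down $\Irr^x(\mathbf N_{\mathsf S_n}(P_n))$. Your additional remarks on bijectivity and on the $p$-core-tower bookkeeping are more explicit than the paper's, but the argument is the same.
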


Now, we study the case of picky elements of Type III.

\begin{thm}\label{TypeIII}
Let $n \equiv \pm3 \pmod{9}$,   let $x,z \in \mathsf{S}_n$ be picky elements of Type I and III, respectively and let $P_n\in \Syl_3(\mathsf{S}_n)$ such that $x,z\in P_n$. Then $\Irr^z(\mathsf{S}_n)=\Irr_{3'}(\mathsf{S}_n)$ and $\Irr^z(\mathbf{N}_{\mathsf{S}_n}(P_n)))=\Irr_{3'}(\mathbf{N}_{\mathsf{S}_n}(P_n)))$ and there exists a bijection
$$\Gamma: \Irr_{3'}(\mathsf{S}_n)\rightarrow \Irr_{3'}(\mathbf{N}_{\mathsf{S}_n}(P_n))$$
such that $\Gamma(\chi)(x)=\pm\chi(x)$ and $\Gamma(\chi)(z)=\pm\chi(z)$ for every $\chi \in \Irr_{3'}(\mathsf{S}_n)$.
\begin{proof}
If $n\in \{3,6\}$, then the result follows by inspection in GAP \cite{gap}.

Let $n =\sum_{i=1}^{f}a_i3^i$ with $a_1>0$ be the $3$-adic expansion of $n$. Let $k=a_13$ and let $m=n-k$. Now, let $P_n=P_{a_k3^k}\times \cdots\times P_{a_13}$, where $P_{a_ip^i}\in \Syl_3(\mathsf{S}_{a_ip^i})$ and let $P_m=P_{a_k3^k}\times \cdots\times P_{a_23^2}$. Given $1 \leq i \leq k$ we choose $x_i\in P_{a_i3^i}$ such that $x_i$ is the product of $a_i$ cycles of length $3^i$ and we choose $z_1\in P_{a_13}$ a picky element of Type III in $\mathsf{S}_{a_13}$.  Then, we may assume that $x=x_1\cdot x_2 \cdots x_f$ and $z=z_1 \cdot x_2 \cdots x_f$. Let us write $y=x_2 \cdots x_f$. We notice that $y$ is a $3$-adic element of $\mathsf{S}_m$

By Theorem \ref{OddCase}, we have that there exists a bijection
$$\Omega: \Irr_{3'}(\mathsf{S}_m)\rightarrow \Irr_{3'}(\mathbf{N}_{\mathsf{S}_m}(P_m))$$
such that $\Omega(\chi)(y)=\pm\chi(y)$ for every $\chi \in \Irr_{3'}(\mathsf{S}_m)$. Moreover, by the case $n\in \{3,6\}$ we know that there exists a bijection
$$\Delta: \Irr_{3'}(\mathsf{S}_k)\rightarrow \Irr_{3'}(\mathbf{N}_{\mathsf{S}_k}(P_k))$$
such that $\Delta(\chi)(x_1)=\pm\chi(x_1)$ and $\Delta(\chi)(z_1)=\pm\chi(z_1)$ for every $\chi \in \Irr_{3'}(\mathsf{S}_k)$.

Let $\lambda$ be a partition of $n$ such that $\chi^{\lambda}(1)_3=1$ and let $T_C(\lambda)=\{\lambda_{ij}\mid 1\leq i \leq f, 0\leq j \leq 3^{i}-1\}$. Let $\gamma_{ij}=\lambda_{ij}$ if $i \geq 2$ and $\gamma_{ij}=0$ if $i=1$ and let $\mu_{ij}=0$ if $i \geq 2$ and $\mu_{ij}=\lambda_{ij}$ if $i=1$. Now, let $\gamma$ be the partition such that $T_C(\gamma)=\{ \gamma_{ij}\mid1\leq i \leq f,  0\leq j \leq 3^{i}-1\}$ and let $\mu$ be the partition such that $T_C(\mu)=\{\mu_{ij}\mid 1\leq i \leq f,  0\leq j \leq 3^{i}-1\}$. We notice that $\gamma$ is a partition of $m$ with $\chi^{\gamma}\in \Irr_{3'}(\mathsf{S}_m)$ and $\mu$ is a partition of $k$ with $\chi^{\mu}\in \Irr_{3'}(\mathsf{S}_k)$. Now, we define
$$\Gamma: \Irr_{3'}(\mathsf{S}_n)\rightarrow \Irr_{3'}(\mathbf{N}_{\mathsf{S}_n}(P_n))$$
by
$$\Gamma(\chi^{\lambda}):=\Omega(\chi^{\gamma})\times\Delta(\chi^{\mu})\in \Irr_{3'}(\mathbf{N}_{\mathsf{S}_n}(P_n)).$$
Now, we have that 
$$\Gamma(\chi^{\lambda})(x)=\Omega(\chi^{\gamma})(y)\Delta(\chi^{\mu})(x_1)=\chi^{\gamma}(y)\chi^{\mu}(x_1)=\chi^{\lambda}(x)$$
and
$$\Gamma(\chi^{\lambda})(x)=\Omega(\chi^{\gamma})(y)\Delta(\chi^{\mu})(z_1)=\chi^{\gamma}(y)\chi^{\mu}(x_1)=\chi^{\lambda}(x).$$
Thus, $\Gamma$ is a bijection satisfying the desired properties.

Now, we prove that $\Irr^z(\mathbf{N}_{\mathsf{S}_n}(P_n))=\Irr_{3'}(\mathbf{N}_{\mathsf{S}_n}(P_n))$. Let $\phi\in \Irr^z(\mathbf{N}_{\mathsf{S}_n}(P_n))$.  Since
$$\mathbf{N}_{\mathsf{S}_n}(P_n)=\mathbf{N}_{\mathsf{S}_m}(P_m)\times \mathbf{N}_{\mathsf{S}_k}(P_k)$$
we have that $\phi=\psi\times \theta$ for $\psi \in \Irr^y(\mathbf{N}_{\mathsf{S}_m}(P_m))$ and $\theta \in \Irr^{z_1}(\mathbf{N}_{\mathsf{S}_k}(P_k))$. From the case $n\in \{3,6\}$ we deduce that $ \Irr^{z_1}(\mathbf{N}_{\mathsf{S}_k}(P_k))= \Irr_{3'}(\mathbf{N}_{\mathsf{S}_k}(P_k))$  and from Theorem \ref{OddCase} we deduce that $\Irr^{y}(\mathbf{N}_{\mathsf{S}_m}(P_m))= \Irr_{3'}(\mathbf{N}_{\mathsf{S}_m}(P_m))$. It follows that $\phi(1)_3=1$. Therefore, $\Irr^z(\mathbf{N}_{\mathsf{S}_n}(P_n))=\Irr_{3'}(\mathbf{N}_{\mathsf{S}_n}(P_n))$. Now, using the bijection $\Gamma$ and reasoning as in Theorem \ref{OddCase}, we deduce that  $\Irr^z(\mathsf{S}_n)=\Irr_{3'}(\mathsf{S}_n)$.
\end{proof}
\end{thm}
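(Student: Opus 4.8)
The plan is to induct on the number of terms in the $3$-adic expansion of $n$, taking $n\in\{3,6\}$ as base cases (checked in GAP) and exploiting the multiplicativity of the relevant groups, character degrees and character values under splitting $n$ at the exponent~$1$. Write $n=\sum_{i=1}^f a_i3^i$; since $n\equiv\pm3\pmod 9$ we have $a_0=0$ and $a_1\in\{1,2\}$. Put $k=3a_1\in\{3,6\}$ and $m=n-k=\sum_{i\geq 2}a_i3^i$. By the description of Sylow normalizers in Subsection~\ref{SubsectCentralizers} one has $P_n=P_m\times P_k$ and $\mathbf{N}_{\mathsf{S}_n}(P_n)=\mathbf{N}_{\mathsf{S}_m}(P_m)\times\mathbf{N}_{\mathsf{S}_k}(P_k)$, and both $x$ and $z$ decompose as $x=x_1\cdot y$, $z=z_1\cdot y$, where $y\in\mathsf{S}_m$ is $3$-adic, $x_1\in\mathsf{S}_k$ is a Type~I element and $z_1\in\mathsf{S}_k$ is a Type~III element (so $z_1=\Id$ if $k=3$ and $z_1=(1,2,3)$ if $k=6$).

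I would then build $\Gamma$ from two ingredients. Theorem~\ref{OddCase}, applied to the $3$-adic $y\in\mathsf{S}_m$, gives a bijection $\Omega\colon\Irr_{3'}(\mathsf{S}_m)\to\Irr_{3'}(\mathbf{N}_{\mathsf{S}_m}(P_m))$ with $\Omega(\psi)(y)=\pm\psi(y)$, together with the equalities $\Irr^y(\mathsf{S}_m)=\Irr_{3'}(\mathsf{S}_m)$ and $\Irr^y(\mathbf{N}_{\mathsf{S}_m}(P_m))=\Irr_{3'}(\mathbf{N}_{\mathsf{S}_m}(P_m))$; the base case supplies a bijection $\Delta\colon\Irr_{3'}(\mathsf{S}_k)\to\Irr_{3'}(\mathbf{N}_{\mathsf{S}_k}(P_k))$ with $\Delta(\theta)(x_1)=\pm\theta(x_1)$, $\Delta(\theta)(z_1)=\pm\theta(z_1)$, and $\Irr^{z_1}(\mathbf{N}_{\mathsf{S}_k}(P_k))=\Irr_{3'}(\mathbf{N}_{\mathsf{S}_k}(P_k))$. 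Next I would split $3'$-characters of $\mathsf{S}_n$ via their $3$-core towers: for $\chi^\lambda\in\Irr_{3'}(\mathsf{S}_n)$ one has $\sum_j|\lambda_{ij}|=a_i$ for every $i$ (so row~$0$ is trivial), and separating the rows $i\geq 2$ from the row $i=1$ produces $\gamma\vdash m$ with $\chi^\gamma\in\Irr_{3'}(\mathsf{S}_m)$ and $\mu\vdash k$ with $\chi^\mu\in\Irr_{3'}(\mathsf{S}_k)$. Since $T_C$ is a bijection, $\chi^\lambda\mapsto(\chi^\gamma,\chi^\mu)$ is a bijection onto $\Irr_{3'}(\mathsf{S}_m)\times\Irr_{3'}(\mathsf{S}_k)$, so
$$\Gamma(\chi^\lambda):=\Omega(\chi^\gamma)\times\Delta(\chi^\mu)\in\Irr_{3'}(\mathbf{N}_{\mathsf{S}_m}(P_m))\times\Irr_{3'}(\mathbf{N}_{\mathsf{S}_k}(P_k))=\Irr_{3'}(\mathbf{N}_{\mathsf{S}_n}(P_n))$$
is a bijection.

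For the value conditions I would invoke the iterated Murnaghan--Nakayama rule (Theorem 2.7.27 of \cite{JK}, exactly as in the proof of Theorem~\ref{OddCase}): removing from $\lambda$ all hooks corresponding to the cycles of $y$ forces removal of precisely the rows $i\geq 2$ of $T_C(\lambda)$ and leaves $\mu$, whence $\chi^\lambda(x)=\pm\chi^\gamma(y)\chi^\mu(x_1)$ and likewise $\chi^\lambda(z)=\pm\chi^\gamma(y)\chi^\mu(z_1)$; combined with the properties of $\Omega$ and $\Delta$ this gives $\Gamma(\chi^\lambda)(x)=\pm\chi^\lambda(x)$ and $\Gamma(\chi^\lambda)(z)=\pm\chi^\lambda(z)$. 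Finally, for the identities $\Irr^z(\mathsf{S}_n)=\Irr_{3'}(\mathsf{S}_n)$ and $\Irr^z(\mathbf{N}_{\mathsf{S}_n}(P_n))=\Irr_{3'}(\mathbf{N}_{\mathsf{S}_n}(P_n))$: an irreducible character $\psi\times\theta$ of the direct product that is nonzero at $z=y\cdot z_1$ has $\psi\in\Irr^y(\mathbf{N}_{\mathsf{S}_m}(P_m))$ and $\theta\in\Irr^{z_1}(\mathbf{N}_{\mathsf{S}_k}(P_k))$, which by the above equal $\Irr_{3'}(\mathbf{N}_{\mathsf{S}_m}(P_m))$ and $\Irr_{3'}(\mathbf{N}_{\mathsf{S}_k}(P_k))$ respectively, so $\psi\times\theta$ has $3'$-degree; then, since $\mathbf{C}_{\mathsf{S}_n}(z)\subseteq\mathbf{N}_{\mathsf{S}_n}(P_n)$ by Lemma~\ref{Centralizer} and $|\Gamma(\chi)(z)|=|\chi(z)|$, a second orthogonality count as in Theorem~\ref{OddCase} forces $\Irr^z(\mathsf{S}_n)=\Irr_{3'}(\mathsf{S}_n)$.

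The hard part will be the value factorization in the third step: one must check carefully that the $3$-core-tower splitting of $\lambda$ at level~$1$ is compatible, up to a sign depending only on $\lambda$, with the cycle-type splitting $x=x_1y$ (and $z=z_1y$) — that is, that the iterated Murnaghan--Nakayama rule collapses the sum over hook-removal sequences to the single product $\pm\chi^\gamma(y)\chi^\mu(x_1)$. Granting this, everything else is bookkeeping with the tools of Sections~\ref{Preliminaries} and~\ref{SubnormalPart}.
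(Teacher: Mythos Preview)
Your proposal is correct and follows essentially the same approach as the paper: split $n=m+k$ with $k=3a_1\in\{3,6\}$, use Theorem~\ref{OddCase} on the $3$-adic factor $\mathsf{S}_m$ and the GAP base case on $\mathsf{S}_k$, glue the two bijections via the $3$-core-tower decomposition of $\lambda$, and finish with the direct-product plus second-orthogonality argument. Your identification of the value factorization $\chi^\lambda(x)=\pm\chi^\gamma(y)\chi^\mu(x_1)$ as the key step is apt; the paper handles it exactly as you suggest, by appeal to 2.7.27 of \cite{JK} (the same device used in Theorem~\ref{OddCase}).
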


Now, we prove Theorem \ref{thmA}.

\begin{proof}[Proof of Theorem \ref{thmA}]
Let $n\geq 1$, let $p$ be a prime and let $P_n\in \Syl_p(\mathsf{S}_n)$. Let $\mathcal{P}\subseteq P_n$ be the set of picky elements contained in $P_n$.

First, let us assume that $p=2$.  In particular,  $\mathbf{N}_{\mathsf{S}_n}(P_n)=P_n$. Assume first that $n$ is odd. In this case,  the unique conjugacy class of picky $2$-elements of $\mathsf{S}_n$ is the conjugacy class of $2$-adic elements. Thus,  $\Irr^{\mathcal{P}}(\mathsf{S}_n)=\Irr^x(\mathsf{S}_n)$ and $\Irr^{\mathcal{P}}(\mathbf{N}_{\mathsf{S}_n}(P_n))=\Irr^x(\mathbf{N}_{\mathsf{S}_n}(P_n))$, where $x$ is a picky $2$-element of Type I. The result follows from Lemma \ref{TypeI}.

Assume now that $n$ is even. Let  $x,y\in P_n$ be picky elements of types I and II, respectively.  By Lemma \ref{TypeI}, we have that $\Irr^x(\mathsf{S}_n)=\Irr_{2'}(\mathsf{S}_n)$ and $\Irr^x(P_n)=\Irr_{2'}(P_n)$. Thus,  $\Irr^{\mathcal{P}}(\mathsf{S}_n)=\Irr^y(\mathsf{S}_n)$ and $\Irr^{\mathcal{P}}(P_n)=\Irr^y(P_n)$. If $n \not \equiv 0 \pmod 8$, then the result follows by combining Lemmas \ref{TypeI} and \ref{8notdivides}. Thus, we may assume that $n \equiv 0 \pmod 8$. Let 
$$\Gamma:\Irr^y(\mathsf{S}_n)\rightarrow \Irr^y(P_n)$$ be a bijection provided by Theorem \ref{LastCase}. Since $\Gamma(\chi)(1)_2=\chi(1)_2$ for every $\chi \in \Irr^y(\mathsf{S}_n)$, we have that $\Gamma(\Irr^x(\mathsf{S}_n))=\Gamma(\Irr_{2'}(\mathsf{S}_n))=\Irr_{2'}(P_n)=\Irr^x(P_n)$. Moreover, if $\chi\in \Irr^y(\mathsf{S}_n)$, then $\Gamma(\chi)(x)=\pm\chi(x)$ by Lemma \ref{TypeI} and $\Gamma(\chi)(y)=\pm\chi(y)$ by Theorem \ref{LastCase}. Thus, the result follows in this case.

Assume now that $p=3$ and $n \equiv \pm 3\pmod{9}$. Let $x,z\in P_n$ be picky elements of types I and III, respectively. By Theorem \ref{TypeIII} and Lemma \ref{TypeI}, we deduce that $\Irr^z(\mathbf{N}_{\mathsf{S}_n}(P_n))=\Irr^x(\mathbf{N}_{\mathsf{S}_n}(P_n))=\Irr_{3'}(\mathbf{N}_{\mathsf{S}_n}(P_n))$ and $\Irr^z(\mathsf{S}_n)=\Irr^x(\mathsf{S}_n)=\Irr_{3'}(\mathsf{S}_n)$. It follows that $\Irr^{\mathcal{P}}(\mathbf{N}_{\mathsf{S}_n}(P_n))=\Irr_{3'}(\mathbf{N}_{\mathsf{S}_n}(P_n))$ and $\Irr^{\mathcal{P}}(\mathbf{N}_{\mathsf{S}_n}(P_n))=\Irr_{3'}(\mathbf{N}_{\mathsf{S}_n}(P_n))$. Moreover, by Theorem \ref{TypeIII}, there exists a bijection 
$$\Gamma: \Irr_{3'}(\mathsf{S}_n)\rightarrow \Irr_{3'}(\mathbf{N}_{\mathsf{S}_n}(P_n))$$
such that $\Gamma(\chi)(x)=\pm\chi(x)$ and $\Gamma(\chi)(z)=\pm\chi(z)$ for every $\chi \in \Irr_{3'}(\mathsf{S}_n)$.

Finally, let us assume that either $p>3$ or  that $p=3$ and $n \not \equiv \pm3 \pmod{9}$. Then the unique conjugacy class of picky $p$-elements of $\mathsf{S}_n$ is the conjugacy class of $p$-adic elements.  Therefore,  $\Irr^{\mathcal{P}}(\mathsf{S}_n)=\Irr^x(\mathsf{S}_n)$ and $\Irr^{\mathcal{P}}(\mathbf{N}_{\mathsf{S}_n}(P_n))=\Irr^x(\mathbf{N}_{\mathsf{S}_n}(P_n))$, where $x\in P$ is a picky $p$-element of Type I.  By Theorem \ref{OddCase}, we have that $\Irr^x(\mathsf{S}_n)=\Irr_{p'}(\mathsf{S}_n)$ and $\Irr^x(\mathbf{N}_{\mathsf{S}_n}(P_n))=\Irr_{p'}(\mathbf{N}_{\mathsf{S}_n}(P_n))$. Now, we have that the map $\Phi$ in Theorem \ref{OddCase} is a bijection between $\Irr^x(\mathsf{S}_n)$ and $\Irr^x(\mathbf{N}_{\mathsf{S}_n}(P_n))$ satisfying that $\chi^{\lambda}(x)=\pm\Phi(\chi^{\lambda})(x)$ and that $\chi^{\lambda}(1)_p=1=\Phi(\chi^{\lambda})(1)_p$. The result follows in this case.
\end{proof}

\renewcommand{\abstractname}{Acknowledgements}
\begin{abstract}
The author would like to thank Gunter Malle, Attila Maróti and Alexander Moretó for many helpful conversations during the preparation of this work. Parts of this work were done when the author was visiting the Alfréd Rényi Institute of Budapest. He thanks the Institute for its hospitality.
\end{abstract}

\end{document}